\documentclass[11pt, letterpaper]{amsart}

\usepackage{amssymb,euscript,tikz,units}
\usepackage[colorlinks,citecolor=blue,linkcolor=red]{hyperref}
\usepackage{verbatim}
\usepackage[nameinlink]{cleveref}
\usepackage{enumitem}
\usepackage{hyperref}
\usepackage{url}

\newcommand{\M}{\mathcal{M}}
\newcommand{\sM}{\mathcal{M}}

\newcommand{\sL}{\mathcal{L}}

\newcommand{\sY}{\mathcal{Y}}
\newcommand{\R}{\mathbb{R}}
\newcommand{\Z}{\mathbb{Z}}

\newcommand{\E}{\mathbb{E}_{\rm WP}^g}

\newcommand{\eg}{\textit{e.g.\@ }}

\newcommand{\ie}{\textit{i.e.\@}}

\def\sys{\mathop{\rm sys}}
\def\area{\mathop{\rm Area}}
\def\arcsinh{\mathop{\rm arcsinh}}

\def\Vol{\mathop{\rm Vol}}

\def\dist{\mathop{\rm dist}}
\def\inj{\mathop{\rm inj}}
\def\Prob{\mathop{\rm Prob}\nolimits_{\rm WP}^g}

\DeclareMathOperator{\diam}{diam}

\theoremstyle{plain}
\newtheorem{theorem}{Theorem}

\newtheorem{proposition}[theorem]{Proposition}
\newtheorem{lemma}[theorem]{Lemma}

\newtheorem{remark}[theorem]{Remark}

\newtheorem{example}[theorem]{Example}

\newcommand{\be}{\begin{equation}}
\newcommand{\ene}{\end{equation}}
\newcommand{\br}{\begin{remark}}
\newcommand{\er}{\end{remark}}
\newcommand{\bl}{\begin{lemma}}
\newcommand{\el}{\end{lemma}}
\newcommand{\bcor}{\begin{cor}}
\newcommand{\ecor}{\end{cor}}
\newcommand{\bpro}{\begin{pro}}
\newcommand{\epro}{\end{pro}}
\newcommand{\ben}{\begin{enumerate}}
\newcommand{\een}{\end{enumerate}}
\newcommand{\bp}{\begin{proof}}
\newcommand{\ep}{\end{proof}}
\newcommand{\bpo}{\begin{pro}}
\newcommand{\epo}{\end{pro}}
\newcommand{\beq}{\begin{equation*}}
\newcommand{\eeq}{\end{equation*}}
\newcommand{\bear}{\begin{eqnarray}}
\newcommand{\eear}{\end{eqnarray}}
\newcommand{\beqar}{\begin{eqnarray*}}
\newcommand{\eeqar}{\end{eqnarray*}}
\newcommand{\bt}{\begin{theorem}}
\newcommand{\et}{\end{theorem}}
\newcommand{\bex}{\begin{excer}}
\newcommand{\eex}{\end{excer}}

\theoremstyle{definition}

\theoremstyle{remark}

\newtheorem*{rem*}{Remark}
\newtheorem*{ques*}{Question}
\newtheorem*{def*}{Definition}
\newtheorem*{con*}{Construction}
\newtheorem*{thm*}{\bf Theorem}
\newtheorem*{definition*}{Definition}
\newtheorem*{assum*}{Assumption $(\star)$}
\newtheorem*{obs*}{Observation}

\usepackage[utf8]{inputenc}
\usepackage[justification=centering]{caption}

\begin{document}

\title[Second eigenvalues for large genus]{On second eigenvalues of closed hyperbolic surfaces for large genus}
\author{Yuxin He and Yunhui Wu}
\address{Yau Mathematical Sciences Center \& Department of Mathematical Sciences, Tsinghua University, Haidian District, Beijing 100084, China}
\email[(Y.~H.)]{hyx21@mails.tsinghua.edu.cn}
\email[(Y.~W.)]{yunhui\_wu@tsinghua.edu.cn}

\maketitle

\begin{abstract}
In this article, we study the second eigenvalues of closed hyperbolic surfaces for large genus. We show that for every closed hyperbolic surface $X_g$ of genus $g$ $(g\geq 3)$, up to uniform positive constants multiplications, the second eigenvalue $\lambda_2(X_g)$ of $X_g$ is greater than $\frac{\mathcal{L}_2(X_g)}{g^2}$ and less than $\mathcal{L}_2(X_g)$; moreover these two bounds are optimal as $g\to \infty$. Here $\mathcal{L}_2(X_g)$ is the shortest length of simple closed multi-geodesics separating $X_g$ into three components. 

Furthermore, we also investigate the quantity $\frac{\lambda_2(X_g)}{\mathcal{L}_2(X_g)}$ for random hyperbolic surfaces of large genus. We show that as $g\to \infty$, a generic hyperbolic surface $X_g$ has  $\frac{\lambda_2(X_g)}{\mathcal{L}_2(X_g)}$  uniformly comparable to $\frac{1}{\ln(g)}$.
\end{abstract}

\section{Introduction}\label{introduc}

The spectrum of the Laplacian on a hyperbolic surface has been a fascinating topic in several mathematical fields including analysis, dynamics, geometry, mathematical physics, number theory and so on for a long time. Let $X_g$ be a closed hyperbolic surface of genus $g$. The spectrum of $X_g$ is a discrete closed subset in $\R^{\geq 0}$ and consists of eigenvalues with finite multiplicity. We enumerate them, counted with multiplicity, in the following increasing order:
\[0=\lambda_0(X_g)<\lambda_1(X_g)\leq \lambda_2(X_g) \leq \cdots \to \infty.\]

\noindent It is known by Buser \cite{Bus77} that the $(2g-3)$-th eigenvalue can be arbitrarily closed to $0$; and the $n$-th eigenvalue can be arbitrarily closed to $\frac{1}{4}$ for any $n\geq (2g-2)$. Otal-Rosas \cite{OR09} showed that the $(2g-2)$-th eigenvalue is always greater than $\frac{1}{4}$. One may also refer to Ballmann-Matthiesen-Mondal \cite{BMM16,BMM17} and Mondal \cite{Mon14} for more recent general statements on $\lambda_{2g-2}(X_g)$.

\begin{definition*}
For a hyperbolic surface $X$ and any integer $k\in [1,|\chi(X)|-1]$ where $\chi(X)$ is the Euler characteristic of $X$, we define a positive quantity $\sL_k(X)$ of $X$ to be the minimal possible sum of the lengths of simple closed multi-geodesics in $X$ that cut $X$ into $k+1$ components.
\end{definition*}

\noindent In the closed case, the quantity $\sL_k(X_g)$ can be arbitrarily closed to $0$ when $X_g$ is close enough to a maximal cusped surface. Schoen-Wolpert-Yau \cite{SWY80} showed that for any integer $k \in [1, 2g-3]$, there exist two constants $\alpha_k(g)>0$ and $\beta_k(g)>0$, depending on $k$ and $g$, such that \bear \label{SWY-eigen} \alpha_k(g)\leq \frac{\lambda_k(X_g)}{\sL_k(X_g)}\leq \beta_k(g).\eear
One may see Dodziuk-Randol \cite{DR86} for a different proof, and also see Dodziuk-Pignataro-Randol-Sullivan \cite{DPRS85} for similar results of cusped hyperbolic surfaces. Motivated by the work of Dodziuk-Randol \cite{DR86}, joint with Xue the second named author in \cite{wu2021optimal} studied the asymptotic behavior of the constant $\alpha_1(g)$ for large genus and showed that for any closed hyperbolic surface $X_g$ of genus $g$, 
\[\lambda_1(X_g)\succ \frac{\mathcal{L}_1(X_g)}{g^2}.\]
Moreover, it is known by their earlier work \cite{WX18} that for all $g\geq 2$, there exists a closed hyperbolic surface $\mathcal{Z}_g$ of genus $g$ such that $\lambda_1(\mathcal{Z}_g)\asymp \frac{\mathcal{L}_1(\mathcal{Z}_g)}{g^2}$. 
And they pose the following question:
\begin{ques*}\label{ulb-eig-lg}
For every $g\geq 2$, $k\in [1,2g-3]$ and any closed hyperbolic surface $X_g$ of genus $g$, 
\[\lambda_k(X_g)\succ \frac{k\cdot \mathcal{L}_k(X_g)}{g^2}?\]
\end{ques*}
\noindent As introduced above the case of $k=1$ was positively answered in \cite{wu2021optimal}. In this paper we prove
\bt \label{mt-1}
For every $g\geq 3$ and any closed hyperbolic surface $X_g$ of genus $g$,  
\[\frac{\mathcal{L}_2(X_g)}{g^2}\prec \lambda_2(X_g) \prec \mathcal{L}_2(X_g).\]
Moreover, these two bounds above are optimal for large genus in the sense that for all $g\geq 3$, there exist two closed hyperbolic surfaces $\mathcal{X}_g$ and $\mathcal{Y}_g$ of genus $g$ such that $$\lambda_2(\mathcal{X}_g) \asymp \frac{\mathcal{L}_2(\mathcal{X}_g)}{g^2}\ \textit{and} \ \lambda_2(\mathcal{Y}_g) \asymp\mathcal{L}_2(\mathcal{Y}_g).$$
\et

\begin{rem*}
\ben
\item The proof of Theorem \ref{mt-1} above also works for the general index $k$ independent of $g$, which we leave to interested readers. For simplicity, we only argue the case of $k=2$ in this paper. So the remaining \emph{unsolved} case of the question above is for the index $k=k(g) \in [1, 2g-3]$ depending on $g$.
\item The upper bound in Theorem \ref{mt-1} follows from a refined argument in \cite{SWY80}. However, the existence of $\mathcal{Y}_g$ with $\lambda_2(\mathcal{Y}_g) \asymp \mathcal{L}_2(\mathcal{Y}_g)$ is nowhere trivial, which relies on robust results in \cite{mirzakhani2013growth} of Mirzakhani on Weil-Petersson random hyperbolic surfaces of large genus.
\een
\end{rem*}

For the proof of the lower bound in Theorem \ref{mt-1}, the following result on the first positive Neumann eigenvalue plays a key role, which is also of independent interest. More precisely,
\begin{theorem}\label{mt-2}
If $X_{g,n}$ is a compact hyperbolic surface of genus $g$ with $n$ geodesic boundary component, where $2g+n\geq 4$, and the length of each boundary component is less than or equal to $L$ for some constant $L>0$, then  there exists a constant $C(L)>0$  only depending on $L$ such that $$
\sigma_1(X_{g,n})\geq C(L)\cdot \frac{\mathcal{L}_1(X_{g,n})}{(2g-2+n)^2}
$$
where $\sigma_1(X_{g,n})$ is the first positive Neumann eigenvalue of $X_{g,n}$.
\end{theorem}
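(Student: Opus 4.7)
The plan is to adapt the strategy of Wu--Xue \cite{wu2021optimal}, which proves the analogous closed-surface bound $\lambda_1(X_g)\succ \mathcal{L}_1(X_g)/g^2$, to the Neumann setting on bordered surfaces, using the hypothesis $\ell(\partial_i X_{g,n})\le L$ only to absorb boundary contributions into the constant $C(L)$.

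Let $f$ be a first nontrivial Neumann eigenfunction on $X_{g,n}$ with eigenvalue $\sigma:=\sigma_1(X_{g,n})$, normalized by $\int_{X_{g,n}} f\,dA=0$ and $\|f\|_{L^2}=1$; then $\int_{X_{g,n}}|\nabla f|^2\,dA=\sigma$ and $\partial_\nu f=0$ on $\partial X_{g,n}$. For each regular value $t$ of $f$, set $\Omega_t^+=\{f>t\}$ and $\gamma_t=\partial\Omega_t^+\cap\mathrm{int}(X_{g,n})$; by the Neumann condition each boundary circle of $X_{g,n}$ lies entirely on one side of $\gamma_t$ for generic $t$ and can switch sides only finitely often. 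The coarea formula together with Cauchy--Schwarz yields
$$\int_{\R}\ell(\gamma_t)\,dt \;=\; \int_{X_{g,n}}|\nabla f|\,dA \;\le\; \sqrt{\area(X_{g,n})\cdot\sigma}\;=\;\sqrt{2\pi(2g-2+n)\cdot\sigma}.$$

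The combinatorial core of the argument is to produce an interval $[a,b]$ of regular values, of length $b-a$ bounded below in terms of $(2g-2+n)$, on which $\gamma_t$ (together with a subcollection of boundary circles of total length at most $nL$) forms a separating multi-curve for $X_{g,n}$. Following the Wu--Xue framework, one tracks the evolution of the topology of $\Omega_t^+$ with $t$: it can change at only $O(2g-2+n)$ values of $t$, since each change costs a handle or boundary of the sublevel set and is charged against the Euler characteristic; between two consecutive such critical values the separating behavior is stable, so pigeonhole produces the desired interval. Passing to the geodesic representative (which only shortens length) gives the pointwise lower bound $\ell(\gamma_t)\ge \mathcal{L}_1(X_{g,n})-nL$ for $t\in[a,b]$.

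Plugging this into the coarea estimate gives a Cheeger-type bound of the form $\sigma\gtrsim \mathcal{L}_1(X_{g,n})^2/(2g-2+n)^3$, which is \emph{not yet sharp}. To upgrade from quadratic to linear dependence on $\mathcal{L}_1$ and to improve the power of $(2g-2+n)$, I would invoke a Schoen--Wolpert--Yau-type energy estimate on the collar neighborhoods of the short geodesic components of the minimizing separating multi-geodesic: each short geodesic of length $\ell$ has an embedded hyperbolic collar of width $\sim\log(1/\ell)$, and combining a direct energy computation across these collars with Poincar\'e inequalities on the two pieces $Y_1,Y_2$ cut off by the minimizing multi-geodesic converts one factor of $\mathcal{L}_1$ into an extra factor of $(2g-2+n)$, producing the announced estimate $\sigma\ge C(L)\,\mathcal{L}_1(X_{g,n})/(2g-2+n)^2$. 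The case where $\mathcal{L}_1(X_{g,n})$ is bounded below by a constant depending only on $L$ is handled separately by Mumford-type compactness in the moduli space of bordered hyperbolic surfaces with boundary lengths $\le L$. The main obstacle is the careful bookkeeping required to combine the level-set, collar, and Poincar\'e estimates so that the denominator is \emph{exactly} $(2g-2+n)^2$ rather than a larger power; the bounded-length assumption on $\partial X_{g,n}$ is essential here, since it prevents cusp-like degenerations that would break the Poincar\'e constants of $Y_1,Y_2$.
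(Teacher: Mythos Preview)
Your proposal has a genuine gap at precisely the point you flag as ``the main obstacle.'' The coarea/level-set argument you sketch gives at best $\sigma\gtrsim \mathcal{L}_1^2/(2g-2+n)^3$, and the proposed ``upgrade'' is not a mechanism that actually works. You say that a collar energy computation plus Poincar\'e inequalities on the two pieces $Y_1,Y_2$ cut by the minimizing multi-geodesic ``converts one factor of $\mathcal{L}_1$ into an extra factor of $(2g-2+n)$,'' but the Neumann Poincar\'e constants of $Y_1,Y_2$ are \emph{not} controlled in terms of $L$ alone: each $Y_i$ may itself contain arbitrarily short separating systems, so its Neumann gap can be as small as you like independently of $\mathcal{L}_1(X_{g,n})$ and of boundary lengths. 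Nothing in your outline breaks this circularity. Likewise, invoking ``Mumford-type compactness'' for the case $\mathcal{L}_1\ge c(L)$ only yields a positive lower bound for $\sigma_1$ depending on $(g,n)$ as well as $L$; it cannot produce the $(2g-2+n)^{-2}$ dependence uniformly over all topological types.

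The paper's argument is structurally different. For $\mathcal{L}_1(X_{g,n})\ge\epsilon$ it does \emph{not} use compactness; it applies Cheeger's inequality and analyzes the near-minimizing cut $\Gamma$ by cases (disk/cylinder pieces via isoperimetry; an interior essential loop giving $\ell(\Gamma)\ge\mathcal{L}_1$; an arc meeting $\partial X_{g,n}$ giving $\ell(\Gamma)\ge 2w(L)$ via the collar lemma), each case yielding $\sigma_1/\mathcal{L}_1\ge \hat c_\epsilon(L)/\area^2$. For $\mathcal{L}_1(X_{g,n})\le\epsilon$ it transplants the Wu--Xue mechanism, which is not a coarea/level-set argument: one takes the thick--thin decomposition $X_{g,n}=A\cup B$ with $B$ the union of collars about short interior geodesics, uses elliptic/Sobolev regularity to bound the total oscillation of the eigenfunction on the thick pieces by $c_1(\epsilon)\sqrt{\area\cdot\sigma_1}$, shows the eigenfunction must nonetheless attain values of size $\asymp\area^{-1/2}$ on $A$, and then extracts from the combinatorics of the images $\phi(M_i)=[a_i,b_i]$ a separating subcollection of short collars across which the energy estimate $\int_{T}|\nabla\phi|^2\ge\tfrac14\ell_\gamma\,\delta^2$ gives $\sigma_1\ge \mathcal{L}_1/(2^{14}\area^2)$. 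The linear dependence on $\mathcal{L}_1$ and the exact $\area^{-2}$ come from this oscillation/regularity machinery, not from level-set lengths, and your outline does not contain a substitute for it.
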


\noindent We remark here that the constant $C(L)>0$ in the theorem above can not be chosen to be uniform as in \cite{wu2021optimal} for closed hyperbolic surface case: actually we will construct an example to see that $C(L)\to 0$ as $L=L(g)\to \infty$ (see Example \ref{Examp}). In either \cite{SWY80} or \cite{DR86}, the lower bounds in \eqref{SWY-eigen} for index $i\in [2,2g-3]$ follow from $\lambda_1(X_g)\geq \alpha_1(g) \cdot \sL_1(X_g)$ together with a max-minimal  principle. However, when applying the same argument together by using $\lambda_1(X_g)\succ \frac{\mathcal{L}_1(X_g)}{g^2} $ in \cite{wu2021optimal}, one can not get the lower bound in Theorem \ref{mt-1} because $\sL_1(X_g)$ may go to infinity as $g\to \infty$, preventing us from directly applying Theorem \ref{mt-2}. The major contribution for the lower bound in Theorem \ref{mt-1} is to overcome this difficulty and get the optimal coefficient $\frac{1}{g^2}$ for large $g$.\\

Let $\sM_g$ denote the moduli space of closed hyperbolic surfaces of genus $g$ endowed with the Weil-Petersson metric. Recently the study of random surfaces in $\sM_g$ for large genus is quite active, which was initiated by Mirzakhani \cite{mirzakhani2013growth} based on her celebrated thesis works \cite{Mirz07,Mirz07-int}. One may see \cite{GPY11, MP19, MT20, NWX20, PWX21} and references therein for recent developments on the geometry of random surfaces of large genus; and see \cite{GMST21, MS20, Thomas22, Monk21, WX21, LW21, Hide21, Ru22, SW22} and references therein for recent developments on the spectral theory of random surfaces of large genus. 

We view the quantity $\frac{\lambda_2(X_g)}{\sL_2(X_g)}$ as a positive random variable on $\sM_g$. Theorem \ref{mt-1} tells that
\[\inf\limits_{X_g\in \sM_g}\frac{\lambda_2(X_g)}{\sL_2(X_g)}\asymp \frac{1}{g^2} \ \textit{and} \ \sup\limits_{X_g\in \sM_g}\frac{\lambda_2(X_g)}{\sL_2(X_g)}\asymp 1.\]
In this paper, we show that as $g\to \infty$, a generic $X_g\in \sM_g$ has  $\frac{\lambda_2(X_g)}{\sL_2(X_g)}$ uniformly comparable to $\frac{1}{\ln g}$. More precisely, let $\Prob$ be the probability measure on $\sM_g$ induced by the Weil-Petersson metric. We prove
\bt\label{mt-3}
The following limit holds:
\[\lim \limits_{g\to \infty}\Prob\left(X_g\in \sM_g; \  \frac{\lambda_2(X_g)}{ \sL_2(X_g)}\asymp \frac{1}{\ln g} \right)=1.\]
\et

It would be \emph{interesting} to know whether the following two limits exist: 
\[\lim \limits_{g\to \infty}\inf\limits_{X_g\in \sM_g}\frac{g^2\cdot \lambda_2(X_g)}{\sL_2(X_g)} \ \textit{and}  \ \lim \limits_{g\to \infty}\sup\limits_{X_g\in \sM_g}\frac{\lambda_2(X_g)}{\sL_2(X_g)}.\]

\subsection*{Strategy on the proof.} We briefly introduce the proof of Theorem \ref{mt-1} which is divided into two parts.

\underline{Optimal lower bound:}  we cut $X_g$ along a simple closed multi-geodesic that realizes $\mathcal{L}_1(X_g)$ into two components $M_1, M_2$. Then we consider their first eigenvalues for the Neumann boundary condition and aim to use the Max-minimal principle. \emph{When $\mathcal{L}_1(X_g)$ is uniformly bounded from above}, the lower bound in Theorem \ref{mt-1} is established and based on Theorem \ref{mt-2} which closely follows the methods in \cite{wu2021optimal}.  \emph{When $\mathcal{L}_1(X_g)$ is unbounded for large genus $g$}, we estimate  $\sigma_1(M_i)$ by the widths of maximal half-collars of boundary components. These two cases are separated as Proposition \ref{l2sigma2l} and \ref{l2sigma2l-2} in Section \ref{lowboundestimation}. To see that the lower bounds are optimal: actually the surface $\mathcal{X}_g$ constructed in \cite{WX18} satisfies that $\mathcal{L}_2(\mathcal{X}_g)\asymp 1$ and $\lambda_2(\mathcal{X}_g)\asymp \frac{1}{g^2}.$

\underline{Optimal upper bound:} as in \cite{SWY80} we take appropriate test functions $\psi$ with $\nabla \psi$ supporting on the collars of a simple closed multi-geodesic that realizes $\mathcal{L}_2(X_g)$ into the max-minimal principle. The upper bound in Theorem \ref{mt-1} follows from certain direct computations. The construction of $\mathcal{Y}_g$, realizing the upper bound in Theorem \ref{mt-1}, is from the perspective of probability.  By the work of Mirzakhani \cite{mirzakhani2013growth}, for large $g$ one may choose a hyperbolic surface $X_{g-2}\in\sM_{g-2}$ such that both its systole and Cheeger constant are uniformly positive, and moreover there exists one point in $X_{g-2}$ such that the injectivity radius at this point goes to infinity as $g\to \infty$. We remove that point, and consider the unique complete hyperbolic metric on that surface with the same complex structure as $X_{g-2}$. Then it is known by the work of Brooks \cite{brooks1999platonic} that this one-cusped hyperbolic surface of genus $(g-2)$ has uniformly positive Cheeger constant. Next we replace the cusp with a geodesic boundary and glue a fixed $S_{2,1}$ along it to get a hyperbolic surface $\mathcal{Y}_g \in \sM_g$. We shall show that the new surface $\mathcal{Y}_g$ satisfies that $\mathcal{L}_2(\mathcal{Y}_g)\asymp1$ and  $\lambda_2(\mathcal{Y}_g)\asymp 1$.

\subsection*{Notations.} For any two positive functions $f_1(g)$ and $f_2(g)$, we say $$f_1(g)\prec f_2(g) \quad \emph{or} \quad f_2(g)\succ f_1(g)$$ if there exists a universal constant $C>0$, independent of $g$, such that $f_1(g) \leq C \cdot f_2(g)$; and we say $$f_1(g) \asymp f_2(g)$$ if $f_1(g)\prec f_2(g)$ and $f_2(g)\prec f_1(g)$.

\subsection*{Plan of the paper.} Section \ref{prelina} will provide a review of relevant background materials, and present several useful properties on the geometry and spectra of hyperbolic surfaces, which will be applied in later sections. In Section \ref{inequalitiesfirst} we will prove Theorem \ref{mt-2}. The proof of Theorem \ref{mt-1} will split into two parts: we will firstly show the optimal lower bound of Theorem \ref{mt-1} in Section \ref{lowboundestimation}; and then show the optimal upper bound in Section \ref{upperbound}. In Section \ref{geometryquanityrandomsurface}, we will study the asymptotic behavior of $\frac{\lambda_2(X_g)}{\sL_2(X_g)}$ for random hyperbolic surfaces of large genus, and prove Theorem \ref{mt-3}. 

\subsection*{Acknowledgement}
We would like to thank Yuhao Xue for many helpful discussions, and especially for suggesting Example \ref{Examp}. The second named author is partially supported by the NSFC grant No. $12171263$.

\tableofcontents

\section{Preliminaries}\label{prelina}
In this section, we include some basis on 2-dimensional  hyperbolic geometry and  spectrum theory, along with preparative calculations used in this paper.

\subsection*{Bounds on $\sL_k(X_g)$} Recall that $\mathcal{M}_g$ is the moduli space of closed Riemann surface of genus $g$. Let $X_g\in \sM_g$ and $\mathcal{P}$ be a pants decomposition of $X_g$ consisting of $(3g-3)$ pairwise disjoint simple closed geodesics $\{\gamma_i\}_{i=1}^{3g-3}$. Set \[\ell_\infty(\mathcal{P})=\max_{1\leq i \leq 3g-3}\ell_{\gamma_i}(X_g).\]
The Bers' constant can be defined as 
$$
\mathcal{B}_g=\sup_{X_g\in\mathcal{M}_{g}} \inf_{\mathrm{pants \ decomp} \ \mathcal{P}} \ \ell_\infty(\mathcal{P}).
$$
In \cite{bers1985inequality}   it was shown that
$\mathcal{B}_g\leq 26(g-1)$ for $g\geq 2$. It easily follows that $\sL_k(X_g)\leq 78\cdot k\cdot g$ for $1\leq k\leq 2g-3$. However,  this estimate is not precise enough for our purpose.

In this subsection, we will prove the following two estimates.

\begin{proposition}
\label{xiamianyaoyong}
If $X_{g,n}$ is a hyperbolic surface of genus $g$ with $n$ geodesic boundary components, where $2g+n\geq 4$, and the length of each boundary component is less than or equal to $L$ for some constant $L>0$, then there exists a  constant $s(L)>0$ depending only on $L$ such that $$
\sL_1(X_{g,n})\leq s(L)\ln \left(\area(X_{g,n})\right).
$$
\end{proposition}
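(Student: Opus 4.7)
The strategy is to grow a metric ball in $X_{g,n}$ around a carefully chosen basepoint, and to extract a short separating multi-geodesic from the geodesic representatives of the boundary of the ball.

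\emph{Step 1 (basepoint).} Using the assumption that each component of $\partial X_{g,n}$ has length at most $L$, combined with the Margulis lemma, I would first locate a basepoint $p \in X_{g,n}$ lying in the ``thick part'', so that its injectivity radius in $X_{g,n}$ is bounded below by a constant $c(L) > 0$ depending only on $L$, and such that $p$ is at definite distance from $\partial X_{g,n}$. Such $p$ exists because the thin part (half-collars adjacent to $\partial X_{g,n}$ of total area $O(L)$, together with Margulis tubes around short interior geodesics, whose total area is controlled by $|\chi(X_{g,n})|$ alone) is a strict subset of $X_{g,n}$ once $\area(X_{g,n})$ is large enough.

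\emph{Step 2 (key radius with short boundary).} Let $A(r)=\area(B(p,r))$ and $\ell(r)=\length(\partial B(p,r)\cap \mathrm{int}(X_{g,n}))$. By volume comparison with $\mathbb{H}^2$, $A(r)\leq 2\pi(\cosh r - 1)$, so there is some $r^*=O(\ln \area(X_{g,n}))$ at which $B(p,r^*)$ has area comparable to $\area(X_{g,n})/2$, in particular both $B(p,r^*)$ and its complement have positive area. Combining the coarea identity $\int_0^{r^*}\ell(r)\,dr=A(r^*)\leq \area(X_{g,n})$ with the observation that the topology of $B(p,r)$ can change at most $|\chi(X_{g,n})|$ times, I would extract a regular value $r_0\in[c(L),r^*]$ at which $\ell(r_0)\leq s(L)\ln \area(X_{g,n})$.

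\emph{Step 3 (geodesic representative and separation).} At $r_0$, the set $\partial B(p,r_0)\cap \mathrm{int}(X_{g,n})$ is a disjoint union of smooth simple closed curves (plus, if unavoidable, finitely many simple arcs landing on $\partial X_{g,n}$, which are then closed up through short sub-arcs of $\partial X_{g,n}$ of total length $O(L)$). Replacing each closed component by its free-homotopy geodesic representative and verifying that no essential component degenerates and that the representatives stay disjoint after a small isotopy, I obtain a simple closed multi-geodesic $\Gamma$ of total length at most $\ell(r_0)+O(L)$ that separates $X_{g,n}$ into at least two components of positive area, hence $\sL_1(X_{g,n})\leq s(L)\ln \area(X_{g,n})$.

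\emph{Main obstacle.} The heart of the argument is Step 2: the naive pigeonhole $\int \ell(r)\,dr\leq \area(X_{g,n})$ only yields $\ell(r)\lesssim \area(X_{g,n})/\ln \area(X_{g,n})$, which is far too weak. The improvement must exploit that in hyperbolic geometry the area of a ball grows essentially exponentially in $r$ until it saturates at the total area, so almost all the mass of $\int\ell(r)\,dr$ concentrates on a short sub-interval of $r$, leaving a long sub-interval on which $\ell(r)$ is forced to be small; equivalently, one applies a linear isoperimetric inequality in $\mathbb{H}^2$ to suitable lifts of $B(p,r)$. A secondary subtlety is that $B(p,r_0)$ may touch $\partial X_{g,n}$ and its complement may have several components, so one has to ensure the extracted multi-geodesic genuinely separates $X_{g,n}$ into two pieces; this is precisely where the choice of $p$ deep in the thick part and away from $\partial X_{g,n}$ is essential.
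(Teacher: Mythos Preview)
Your approach is quite different from the paper's, and Step 2 contains a genuine gap that you correctly flag but do not close.

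Here is the concrete obstruction. For the geodesic representative of $\partial B(p,r_0)$ to be nonempty you need at least one component of $\partial B(p,r_0)$ to be essential, which forces $r_0$ to exceed half the systole of $X_{g,n}$. Now take a surface with large systole: puncture a Buser--Sarnak closed surface with $\sys\asymp\ln g$ and open the cusp to a geodesic boundary of length $1$ (Theorem~\ref{parlierlength} preserves the lower bound on interior simple closed geodesics). For every $r$ below half the systole, $B(p,r)$ is an embedded disk with $\ell(r)=2\pi\sinh r$, but its boundary is null-homotopic and hence useless; at the first radius where the boundary becomes essential one already has $\ell(r)\asymp(\area)^{c}$ for some $c>0$, not $\ln(\area)$. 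Your ``exponential concentration'' heuristic only says $\ell(r)$ was small at \emph{earlier}, topologically trivial radii; neither the coarea identity nor the linear isoperimetric inequality in $\H^2$ (which fails to transfer once the ball wraps) forces $\ell(r)$ to drop back to $O(\ln\area)$ at some later radius while the complement is still topologically nontrivial. There is also a smaller issue in Step~3: closing up arcs along $\partial X_{g,n}$ costs $O(nL)$ rather than $O(L)$, and $n$ is not bounded.

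The paper sidesteps all of this by growing from the boundary rather than from an interior point. Fix a boundary geodesic $\gamma_0$; after invoking Theorem~\ref{parlierlength} one may assume $\ell_{\gamma_0}\geq L/2$. Grow the maximal one-sided collar of $\gamma_0$: its area is $\ell_{\gamma_0}\sinh w$, so it must self-intersect (or meet another boundary component) by width
\[
w\leq w_0:=\ln\Bigl(1+\tfrac{4\,\area(X_{g,n})}{L}\Bigr).
\]
The self-intersection produces a geodesic arc of length $\leq 2w_0$ with endpoints on $\gamma_0$; together with $\gamma_0$ it bounds a pair of pants whose remaining boundary geodesics have total length $\leq 2L+4w_0$, giving the explicit bound $\sL_1(X_{g,n})\leq 2L+4\ln\bigl(1+4\,\area(X_{g,n})/L\bigr)$. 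The crucial difference from your plan is that flowing off a \emph{curve} of length $\asymp L$ rather than a point makes the neighbourhood area grow like $L e^{w}$, so the first topologically nontrivial event already occurs at $w=O(\ln\area)$ and the curves it produces are automatically of that same order.
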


\begin{rem*}
As $L\to 0$, this in particular tells that Proposition \ref{xiamianyaoyong} also holds for cusped hyperbolic surfaces.  
\end{rem*}

\begin{proposition}\label{boundlk}
For any integer $k\in[1,2g-3]$, there exists a universal constant $c_k>0$ only depending on $k$ such that for any closed hyperbolic surface $X_g$ of genus $g$, we have 
 $$\sL_k(X_g)\leq c_k\ln \left(\area(X_{g})\right).$$
\end{proposition}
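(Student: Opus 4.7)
The plan is induction on $k$. For the base case $k=1$, apply Proposition~\ref{xiamianyaoyong} directly to $X_g$ with $n=0$; the boundary-length hypothesis is then vacuous and yields $\sL_1(X_g)\leq c_1\ln(\area(X_g))$.

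For the inductive step, assume $\sL_{k-1}(X_g)\leq c_{k-1}\ln(\area(X_g))$ for every closed hyperbolic $X_g$. Take a realizer $\gamma$ of $\sL_{k-1}(X_g)$, partitioning $X_g$ into $k$ components $Y_1,\ldots,Y_k$ with $\length(\gamma)\leq c_{k-1}\ln(\area(X_g))$. Each $Y_j$ is a hyperbolic surface with geodesic boundary, so $\chi(Y_j)\leq -1$; the inequality $k\leq 2g-3$ together with $\sum_{j}\chi(Y_j)=2-2g$ then forces at least one component $Y$ to satisfy $\chi(Y)\leq -2$, and such a $Y$ always admits a further separating simple closed geodesic. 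I would choose such a $Y$ of maximal area, so that $\area(Y)\geq \area(X_g)/k$. The aim is then to produce a separating multi-geodesic $\beta\subset Y$ of length at most $c'\ln(\area(X_g))$ with $c'$ depending only on $k$, so that $\gamma\cup\beta$ separates $X_g$ into at least $k+1$ components and one can set $c_k:=c_{k-1}+c'$.

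The heart of the argument, and the main obstacle, is producing $\beta$. One cannot simply invoke Proposition~\ref{xiamianyaoyong} on $Y$ as a black box, because the boundary components of $Y$ inherited from $\gamma$ need only sum to $O(\ln(\area(X_g)))$ and are not individually bounded by a universal constant $L$; a naive application would pick up an uncontrolled factor $s(c_{k-1}\ln g)$ at each inductive stage. Instead I would mimic the proof of Proposition~\ref{xiamianyaoyong} internally in $Y$: since $\area(Y)\geq \area(X_g)/k$ while the hyperbolic collar of $\partial Y$ has area bounded linearly in $\length(\partial Y)=O(\ln(\area(X_g)))$ by the collar lemma, for $g$ sufficiently large there is a point $p\in Y$ whose distance to $\partial Y$ is at least $\Omega(\ln(\area(Y)))$. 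Growing a geodesic ball around $p$ of exponentially increasing hyperbolic area then forces a self-intersection before the ball meets $\partial Y$, producing an interior geodesic loop of length $O(\ln(\area(Y)))$, which after a standard surgery step yields the required separating multi-geodesic $\beta\subset Y$. The finitely many small genera $g$ for which this estimate fails can be absorbed into the final constant $c_k$, which depends only on $k$.
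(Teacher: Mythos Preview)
Your base case has a genuine gap. Proposition~\ref{xiamianyaoyong} is stated and proved for surfaces with at least one geodesic boundary component: its proof begins by fixing a boundary geodesic $\gamma_0\subset\partial X_{g,n}$ and building a pair of pants off the maximal half-collar of $\gamma_0$. When $n=0$ there is no such $\gamma_0$, and there is no value of $L$ to feed into $s(L)$, so invoking the proposition ``with vacuous boundary hypothesis'' yields no bound at all. The paper handles $k=1$ by a separate argument: take a systolic geodesic $\gamma$, which already has $\ell_\gamma\leq 6\ln g$; if $\gamma$ is separating we are done, and if not one either cuts along $\gamma$ (when $\ell_\gamma<1$, so that Proposition~\ref{xiamianyaoyong} now legitimately applies with $L=1$ to the resulting bordered surface) or uses the maximal collar of $\gamma$ (when $\ell_\gamma\geq 1$) to manufacture a separating multi-curve of length $O(\ln g)$.

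For the inductive step, your diagnosis that Proposition~\ref{xiamianyaoyong} ``cannot be applied as a black box'' is mistaken, and this sends you on an unnecessary detour. The proof of that proposition actually establishes the explicit inequality~\eqref{infactineq},
\[
\sL_1(X_{g,n})\leq 2L+4\ln\Bigl(1+\tfrac{4\,\area(X_{g,n})}{L}\Bigr),
\]
whose dependence on $L$ is linear plus a harmless logarithm, not of the opaque form $s(L)$. Plugging $L=c_{k-1}\ln(\area(X_g))$ into~\eqref{infactineq} for the chosen component $M_1$ gives directly
\[
\sL_1(M_1)\leq 2c_{k-1}\ln(\area(X_g))+4\ln\Bigl(1+\tfrac{4\,\area(M_1)}{c_{k-1}\ln(\area(X_g))}\Bigr)\leq c_k'\ln(\area(X_g)),
\]
which is exactly what you need; this is what the paper does. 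Your proposed workaround---finding a deep point and growing a ball---does not quite close as written: with $\area(Y)\asymp g$ and $\ell(\partial Y)\asymp\ln g$, the deepest point of $Y$ lies at distance roughly $\ln g-\ln\ln g$ from $\partial Y$, while an embedded ball may need radius roughly $\ln g$ before its area forces a self-intersection, so the ball can reach $\partial Y$ first. That boundary-hitting case can be repaired, but doing so recreates precisely the half-collar argument behind~\eqref{infactineq}, so you may as well quote~\eqref{infactineq} to begin with.
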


\begin{rem*}
The case of $k=1$ for Proposition  \ref{boundlk} has been proved in \cite{NWX20}, based on the result in \cite{sabourau2008asymptotic}. We will provide a new  and elementary proof for general $k$. Moreover, we will show that as $g\to \infty$, a generic hyperbolic surface $X_g \in \sM_g$ has $\sL_k(X_g)\succ \ln g$: see Proposition \ref{L2random} and its following remark.
\end{rem*}

Before proving them, we will introduce several  important results on hyperbolic geometry. The first one is the classical Collar Lemma.  see \eg \cite{keen1974collars}.

\begin{lemma}[Collar Lemma]\label{collar lemma} If $\gamma$ is a simple closed geodesic on  a  hyperbolic surface $X$, then  the neighborhood  $$
 T(\gamma)=\{x\in X:\dist(x,\gamma)\leq w(\gamma)\},
 $$
 called the collar of $\gamma$, is isometric to the cylinder $(\rho ,t)\in[-w(\gamma),w(\gamma)]\times \R/\Z$ with the metric $$
 ds^2=d\rho^2+\ell^2_\gamma(X)\cosh^2{\rho}\,dt^2,
 $$
 where $w(\gamma)=\arcsinh\left(\frac{1}{\sinh \left(\frac{1}{2}\ell_\gamma(X)\right)}\right)$ is the half width of the collar $T(\gamma)$.
 \end{lemma}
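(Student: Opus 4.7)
The plan is to reduce the Collar Lemma to a sharp hyperbolic trigonometric inequality proved in the universal cover $\H \to X$.

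First, I would fix a lift $\tilde\gamma\subset\H$ of $\gamma$ and introduce Fermi coordinates $(\rho,s)$ along $\tilde\gamma$, where $s$ is arc length on $\tilde\gamma$ and $\rho$ is the signed distance to $\tilde\gamma$. A direct computation via the exponential map normal to $\tilde\gamma$ shows that the hyperbolic line element pulls back to $d\rho^{2}+\cosh^{2}(\rho)\,ds^{2}$. The deck transformation $T\in\pi_{1}(X)$ corresponding to $\gamma$ has axis $\tilde\gamma$ and translation length $\ell:=\ell_{\gamma}(X)$, so acts by $(\rho,s)\mapsto(\rho,s+\ell)$. Reparameterizing $t=s/\ell\in\R/\Z$ and quotienting the slab $\{|\rho|\le w\}$ by $\langle T\rangle$ produces precisely the model cylinder in the lemma.

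The actual content is then \emph{embeddedness}: the projection of this slab to $X$ is injective if and only if
\[
\dist_{\H}(\tilde\gamma,\,h\tilde\gamma)\;\ge\;2w\qquad\text{for every }h\in\pi_{1}(X)\setminus\langle T\rangle.
\]
Since $\gamma$ is simple, $\tilde\gamma$ and $h\tilde\gamma$ are disjoint complete geodesics for every such $h$, and hence admit a common perpendicular of some positive length $d=d(h)$. It therefore suffices to show $d\ge 2w$ for all $h\notin\langle T\rangle$.

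To derive this bound I would invoke the displacement formula for a hyperbolic isometry $A$ of $\H$ of translation length $\ell$ with axis $\alpha$: a point $p\in\H$ at distance $r$ from $\alpha$ satisfies $\sinh\!\bigl(\tfrac{1}{2}\dist(p,Ap)\bigr)=\cosh(r)\sinh(\ell/2)$. Applying this with $A=T$ for a foot of the common perpendicular on $h\tilde\gamma$, and pairing it with the analogous identity for $hTh^{-1}$ (whose axis is $h\tilde\gamma$), a right-angled hyperbolic quadrilateral argument---equivalently, a right-angled hexagon formula applied to the annular cover between $\tilde\gamma$ and $h\tilde\gamma$---yields the key sharp estimate
\[
\sinh\!\left(\tfrac{d}{2}\right)\sinh\!\left(\tfrac{\ell}{2}\right)\;\ge\;1.
\]
Since $\sinh(w)\sinh(\ell/2)=1$ by the very definition $w=\arcsinh(1/\sinh(\ell/2))$, this is exactly $d\ge 2w$, completing the argument.

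The main obstacle is the sharp inequality in the last display: one has to pick the right configuration (using the simplicity of $\gamma$ crucially to guarantee that $\tilde\gamma$ and $h\tilde\gamma$ are genuinely disjoint rather than crossing) and the right hexagon/quadrilateral whose trigonometric identity is saturated in the limit $d\to 2w$. Once this inequality is in hand, the Fermi-coordinate computation and the cylindrical isometry type both follow formally.
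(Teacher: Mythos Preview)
The paper does not prove this lemma at all: it is stated as a classical result with a citation to Keen~\cite{keen1974collars}, so there is no in-paper argument to compare against. Your task is therefore to give a self-contained proof, and your outline is mostly sound: the Fermi-coordinate computation for the metric on the tube is correct, and reducing embeddedness to the bound $\dist_{\H}(\tilde\gamma,h\tilde\gamma)\ge 2w$ for every $h\notin\langle T\rangle$ is exactly the right move.

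The gap is in the derivation of the sharp inequality $\sinh(d/2)\sinh(\ell/2)\ge 1$. The displacement formula you invoke, $\sinh\!\bigl(\tfrac12\dist(p,Tp)\bigr)=\cosh(r)\sinh(\ell/2)$, is correct but does not by itself produce this bound: applying it at the foot $A'\in h\tilde\gamma$ gives information about $\dist(A',TA')$, and the ``analogous identity for $hTh^{-1}$'' gives information about $\dist(A,T'A)$, but you never explain how to combine these into an inequality involving $d/2$. The phrases ``right-angled quadrilateral argument'' and ``hexagon formula applied to the annular cover between $\tilde\gamma$ and $h\tilde\gamma$'' are placeholders rather than arguments---no specific quadrilateral or hexagon is named, and the region between two disjoint geodesics in $\H$ is an infinite strip, not an annulus to which a hexagon identity applies.

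One clean way to close the gap, staying in your universal-cover framework: let $M$ be the midpoint of the common perpendicular and $R_M$ the half-turn about $M$, so $R_M$ swaps $\tilde\gamma$ and $h\tilde\gamma$. Then $(R_MT)^2=(T')^{\pm1}T$ lies in $\pi_1(X)\setminus\{1\}$ (simplicity of $\gamma$ forces $T'\ne T^{\pm1}$), hence is not elliptic since $\pi_1(X)$ is torsion-free; therefore $R_MT$ is not elliptic. A direct trace computation in $\mathrm{SL}(2,\R)$ with $\tilde\gamma$ the imaginary axis gives $\lvert\mathrm{tr}(R_MT)\rvert=2\sinh(d/2)\sinh(\ell/2)$, and non-ellipticity means this is $\ge 2$. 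Alternatively, the standard textbook route (Buser) avoids the universal cover entirely: extend $\gamma$ to a pants decomposition and verify via the right-angled hexagon formula that in each pair of pants the half-collars of the prescribed widths around the three boundary geodesics are pairwise disjoint; this is a different argument from what you sketched, and you should either carry it out or make the half-turn computation above explicit.
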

 
 Let $S_{g,n}$ be a fixed   surface of genus $g$ with $n$ boundary components.
 We use $\mathcal{M}_{g,n}(L)=\mathcal{M}(S_{g,n},\ell_{\alpha_i}=L_i)$ with $L=(L_1,\cdots,L_n)\in \mathbb{R}_{\geq 0}^n$ to represent the moduli space of  hyperbolic Riemann surfaces homeomorphic to $S_{g,n}$  with $n$ geodesic boundary components $(\alpha_1,\cdots,\alpha_n)$ of lengths $(L_1,\cdots,L_n)$.
 The following theorem, given by Parlier in \cite{parlier2005lengths},  will be frequently  applied in this paper to deal with hyperbolic surfaces with boundaries. 
 
 \begin{theorem}[Parlier]\label{parlierlength}
Use $L\leq \tilde{L}$ to represent  $L_i\leq \tilde{L}_i$ for any $1\leq i\leq n$,
 where $L=(L_1,\cdots,L_n)$ and  $\tilde{L}=(\tilde{L}_1,\cdots,\tilde{L}_n)$.
 For any $X_{g,n}\in\mathcal{M}_{g,n}(L)$,  if $L\leq \tilde{L}$, then there is an $\tilde{X}_{g,n}\in \mathcal{M}_{g,n}(\tilde{L})$ such that for any simple closed curve $\gamma$ on $S_{g,n}$, $$\ell_\gamma(X_{g,n})\leq \ell_{\gamma}(\tilde{X}_{g,n}).$$
 Similarly, for any $X_{g,n}\in\mathcal{M}_{g,n}(L)$,  if $L\geq \tilde{L}$, then there is an $\tilde{X}_{g,n}\in \mathcal{M}_{g,n}(\tilde{L})$ such that for any simple closed curve $\gamma$ on $S_{g,n}$, $$\ell_\gamma(X_{g,n})\geq \ell_{\gamma}(\tilde{X}_{g,n}).$$
Here the length of a boundary component is allowed to be zero, representing a puncture instead of a boundary geodesic.
\end{theorem}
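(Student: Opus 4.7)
This is the comparison theorem of Parlier \cite{parlier2005lengths}, and I would organize its proof around a continuous deformation in the Teichm\"uller space $\mathcal T_{g,n}$.

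The setup is to fix a pants decomposition $\mathcal P$ of $S_{g,n}$ containing the boundary components $\alpha_1,\dots,\alpha_n$ among its curves. The associated Fenchel-Nielsen coordinates on $\mathcal T_{g,n}$ consist of the $n$ boundary lengths $L_i$ together with a length parameter $\ell_j$ and twist parameter $\tau_j$ for each of the $3g-3+n$ interior pants curves. Given $X \in \mathcal T_{g,n}(L)$, I would consider the linear path $L(t)=(1-t)L+t\tilde L$ for $t\in[0,1]$ and attempt to build a lift $X_t \in \mathcal T_{g,n}(L(t))$ with $X_0=X$ such that every simple closed geodesic length $\ell_\gamma(X_t)$ is monotone in $t$---non-decreasing when $L\leq\tilde L$, non-increasing when $L\geq\tilde L$. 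Taking $\tilde X := X_1$ then yields the theorem, with Mumford-type compactness guaranteeing that the path does not degenerate in moduli space.

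The crux is the infinitesimal step: at each $(X_t, L(t))$, I must find a tangent vector $v \in T_{X_t}\mathcal T_{g,n}$ that projects to $\dot L(t)$ under the boundary-length map and has $d\ell_\gamma(v)$ of the required sign for \emph{every} simple closed curve $\gamma$. The natural tool is Wolpert's twist-derivative formula
\[
\frac{\partial \ell_\gamma}{\partial\tau_{\beta_j}} = \sum_{p\in\gamma\cap\beta_j}\cos\theta_p,
\]
combined with the hyperbolic trigonometric identities for a pair of pants, in order to absorb sign-varying contributions from the boundary-length change by a compensating deformation in the interior length and twist directions.

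The main obstacle is exactly this infinitesimal construction. The naive choice of $v$---keeping all interior FN parameters fixed and only varying the $L_i$---does \emph{not} work: in a pair of pants with cuff lengths $(a,b,c)$, the orthogeodesic between cuffs $b$ and $c$ is strictly \emph{increasing} in $a$, while the orthogeodesics from $a$ to $b$ and from $a$ to $c$ are strictly \emph{decreasing} in $a$. Thus shrinking a single boundary $\alpha_i$ lengthens some arcs inside the adjacent pair of pants and shortens others, and different simple closed curves $\gamma$ therefore respond with opposite signs. Locating the correct simultaneous length-twist deformation that realizes a uniform sign for $d\ell_\gamma(v)$ over \emph{all} simple closed $\gamma$ is the heart of Parlier's argument.
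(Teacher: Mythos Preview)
The paper does not supply its own proof of this theorem; it is quoted from \cite{parlier2005lengths}. So there is no paper proof to compare against, but the paper does describe enough of Parlier's construction (see the passage around \cite[Lemma~3.1]{parlier2005lengths} in Section~\ref{upperbound}) that one can contrast it with your outline.

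Your proposal has a genuine gap, and you have essentially flagged it yourself: you set up a continuous path $L(t)$ and ask for a tangent vector $v$ at each time with $d\ell_\gamma(v)\ge 0$ for \emph{every} simple closed $\gamma$, but you do not produce such a $v$. Saying that ``locating the correct simultaneous length--twist deformation \dots\ is the heart of Parlier's argument'' is an admission that the key step is missing. An ODE framework only yields a proof once you exhibit the vector field; otherwise it is just a restatement of the goal.

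Parlier's actual argument avoids this difficulty entirely and is much more direct. One changes a single boundary length at a time. Let $P$ be the pair of pants in a fixed pants decomposition adjacent to the boundary $\alpha_1$, with the other two cuffs $\alpha,\beta$. Replace $P$ by the pair of pants $P'$ with cuff lengths $(\tilde L_1,\ell_\alpha,\ell_\beta)$ and glue $P'$ back to $X\setminus P$ along $\alpha,\beta$ with the \emph{same} twist parameters. The key geometric fact---exactly the \cite[Lemma~3.1]{parlier2005lengths} invoked in this paper---is that every simple geodesic arc in $P$ with both endpoints on $\alpha\cup\beta$ is monotone in the opposite cuff length $L_1$. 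Any simple closed geodesic $\gamma$ on $X$ either misses $P$, in which case its length is unchanged, or meets $P$ in a finite union of such arcs, each of which moves in the desired direction while the complementary arcs in $X\setminus P$ are literally unchanged. Hence the piecewise-geodesic representative of $\gamma$ in the new metric has the required inequality, and so does the geodesic representative. There is no infinitesimal balancing act and no need for Wolpert's twist formula; the monotonicity is obtained from a single hyperbolic-trigonometric lemma on a pair of pants.
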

 For a  hyperbolic surface $X$,  its \emph{systole} $\sys(X)$ is  the infimum of all lengths of simple closed geodesics that are not homotopic to any boundary component of $X$.  An easy area argument shows that for any closed hyperbolic surface $X_g$,
 \begin{equation}\label{systolebounded}
     \sys (X_g)\leq  2\ln (4g-2)
     \leq 6\ln g.
 \end{equation}
 
Proposition \ref{xiamianyaoyong} will be used  to  prove Proposition \ref{boundlk}, and will also be used in the later part of this article.
\bp[Proof of Proposition \ref{xiamianyaoyong}]
In fact, we will prove that
\begin{equation}\label{infactineq}
\sL_1(X_{g,n})\leq 2L+4\ln\left(1+\frac{4\area(X_{g,n})}{L}\right).
\end{equation}

Fix a boundary geodesic $\gamma_0\subset \partial X_{g,n}$ and consider its length. 

\underline{Case-1:} \textit{$\ell_{\gamma_0}(X_{g,n})\geq \frac{1}{2}L$.} Consider the width of the maximal half collar of $\gamma_0$. That is, the maximal $w$ such that $N_{\gamma_0}(w)=\{x\in X_{g,n}|\dist(x,\gamma_0)<w\}$ is isometric to $[0,w)\times S^1$ with  the metric $d\rho^2+\cosh^2\rho\, \ell^2_{\gamma_0}(X_{g,n})dt^2$ as in  Lemma \ref{collar lemma}. The area of the half collar  is $\ell_{\gamma_0}(X_{g,n})\sinh w\leq \area(X_{g,n})$, so we have $\frac{e^w-1}{2}\leq \frac{2\area(X_{g,n})}{L}$, which implies 
\begin{equation}\label{w0L}
    w\leq w_0:=\ln\left(1+\frac{4\area(X_{g,n})}{L}\right).
\end{equation}
\begin{figure}[h]
    \centering
    \includegraphics[width=2.5 in]{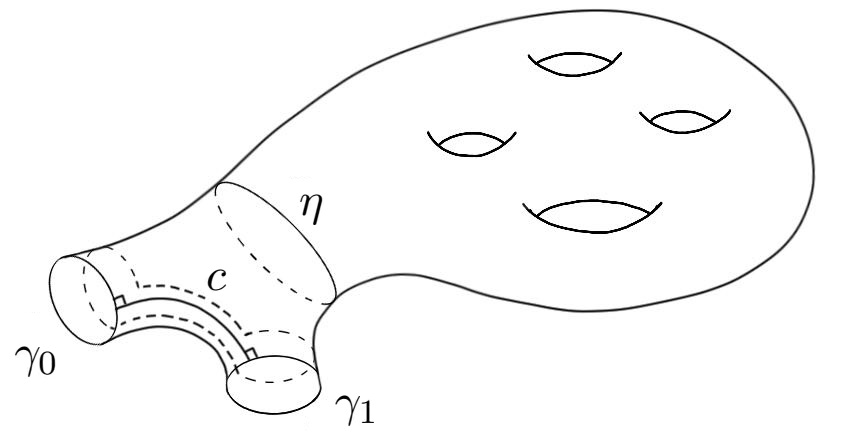}
    \caption{Subcase-$1$}
    \label{fig:lemma33case1}
\end{figure}Then two sub-cases may occur. Firstly, if the boundary of $N_{\gamma_0}(w)$ touches another boundary component $\gamma_1$, then there is  a shortest geodesic $c$ connecting $\gamma_0$ and $\gamma_1$ of length $w$. 
Take $\delta =\gamma_0\cup c\cup \gamma_1$. The boundary of its $\epsilon$ neighborhood $\partial N_\delta(\epsilon)$ for small $\epsilon$ will be  homotopic to a simple closed geodesic $\eta$. See Figure \ref{fig:lemma33case1}. Since, along with $\{\gamma_0,\gamma_1\}$,  the curve $\eta$  bounds a pair of pants and  the length of $\eta$ is bounded from above by $\ell_{\gamma_0}(X_{g,n})+\ell_{\gamma_1}(X_{g,n})+2w$, we have  $$\sL_1(X_{g,n})\leq \ell_\eta(X_{g,n})\leq 2L+2w_0(L).$$
Otherwise,  if the boundary of $N_{\gamma_0}(w)$ doesn't touch another boundary component, then there is a geodesic arc $c$ connecting two different  points $\{p,q\}$ on $\gamma_0$ of length $2w$. The two points  $\{p,q\}$  separate $\gamma_0$ into two parts $\gamma_1$ and $\gamma_2$. Then  $\gamma_1\cup c$ and $\gamma_2\cup c$ will be  homotopic to   simple closed geodesics $\delta_1$ and $\delta_2$ respectively of lengths  both less than $\ell_{\gamma_0}(X_{g,n})+2w$. See Figure \ref{fig:lemma33case2}.  Since   $\delta_1,\delta_2$ along  with $\gamma_0$  bound a pair of pants, we have $$\sL_1(X_{g,n})\leq \ell_{\delta_1}(X_{g,n})+\ell_{\delta_2}(X_{g,n})\leq 2L+4w_0(L).$$ 

\begin{figure}[h]
    \centering
    \includegraphics[width=2.5 in]{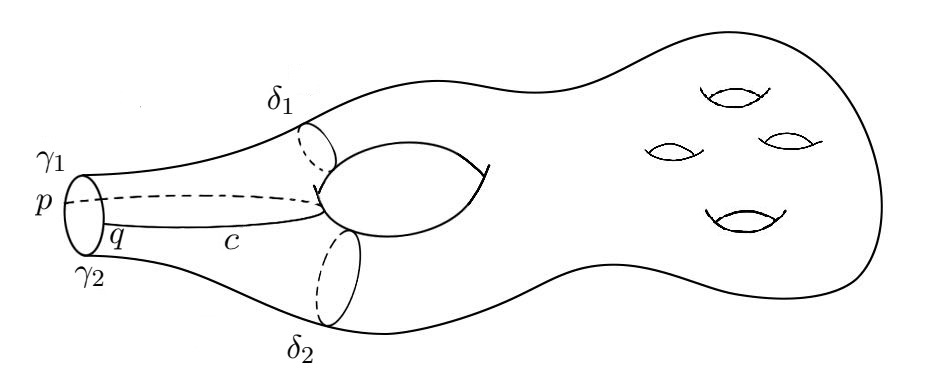}
    \caption{Subcase-$2$}
    \label{fig:lemma33case2}
\end{figure}

\underline{Case-2:} \textit{$\ell_{\gamma_0}(X_{g,n})\leq \frac{1}{2}L$.}  By Theorem \ref{parlierlength}, there is a hyperbolic surface $\Tilde{X}_{g,n}$ homeomorphic to $X_{g,n}$, satisfying that the boundary geodesic corresponding to $\gamma_0$ on $\tilde{X}_{g,n}$ is of length $\frac{L}{2}$, and all interior simple closed geodesics will not be shorter. By the same argument as in Case-1, we have $\sL_1(\tilde{X}_{g,n})\leq 2L+4w_0(L)$. Since the union of simple closed curves corresponding to   a multi-curve realizing $\sL_1(\tilde{X}_{g,n})$ will also separate $X_{g,n}$, we have  $$\sL_1(X_{g,n})\leq \sL_1(\tilde{X}_{g,n})\leq 2L+4w_0(L).$$ 

Substitute \eqref{w0L} into the inequality $\sL_1(X_{g,n})\leq 2L+4w_0(L)$ above, we obtain \eqref{infactineq}. Then the conclusion follows by taking a suitable $s(L)>0$ only depending on $L$.
\ep
Now we are ready to prove Proposition \ref{boundlk}.
\begin{proof}[Proof of Proposition \ref{boundlk}]
 For $k=1$, consider a simple closed geodesic $\gamma$ with $\ell_{\gamma}(X_g)=\sys(X_g)$. If $\gamma$ is separating, that is, $X_g-\gamma$ is not connected, then by (\ref{systolebounded}) we have \be\label{prop31}
\sL_1(X_g)\leq \ell_{\gamma}(X_g)\leq 6\ln g.\ene
Now assume that $\gamma$ is not separating. We firstly assume that $\ell_\gamma(X_g)< 1$, then $X_{g}-\gamma$ is of type $(g-1,2)$ with each boundary component shorter than $1$. So by Proposition \ref{xiamianyaoyong}, we have $\sL_1(X_g-\gamma)\leq s(1)\ln (4\pi(g-1))$.  The  multi-curve $\eta$ realizing $\sL_1(X_{g}-\gamma)$, along with $\gamma$, will separate $X_g$ into two parts. Therefore, \be\label{prop32}
\sL_1(X_g)\leq s(1)\ln (4\pi(g-1))+1.
\ene
If $\gamma$ is non-separating and $1\leq \ell_\gamma(X_g)<6\ln(g)$, then consider the half width $w$ of its maximal collar $(-w,w)\times S^1$  as described in Proposition \ref{xiamianyaoyong}.
The area of the maximal collar is $2\ell_\gamma(X_g)\sinh w\leq 4\pi(g-1).$ Then $e^w-1\leq 2\sinh w\leq 4\pi(g-1)$, implying that 
\be\label{prop33}
 w\leq 5\ln g.\ene  
 By the definition of the maximal collar, there is  a geodesic arc $c$ of length $2w$ with its endpoints on $\gamma$. Consider the $\epsilon$ neighborhood  $N_\epsilon(c\cup \gamma)$ of $c\cup \gamma$ for small $\epsilon$. The boundary $\partial N_\epsilon(c\cup \gamma)$ will be homotopic to some simple closed geodesics that bound a pair of pants along with $\gamma$, with the total length bounded from above by $4w+2\ell_
 \gamma(X_g).$ It follows from (\ref{prop31}) and (\ref{prop33}) that \be\label{prop34}
 \sL_1(X_g)\leq 4w+3\ell_\gamma(X_g)\leq 38\ln g. 
 \ene
Recall that $\area(X_g)=4\pi(g-1)$ by Gauss-Bonnet. Therefore it follows from \eqref{prop32} and \eqref{prop34} that  
\begin{equation*}
\sL_1(X_g)\leq c_1\ln \left(\area(X_g)\right)
\end{equation*}
for a suitable universal constant $c_1>0$.

Now we show the way to find $c_k$  based on the estimate for $k-1$ if $k\geq 2$. Let $\gamma=\cup_{j=1}^t\gamma_j$ be the multi-curve realizing $\sL_{k-1}(X_g)$ which separates $X_g$ into $k$ components  $\{M_i\}_{i=1}^k$. By  assumption, we have
\be\label{prop35}
\ell_{\partial M_i}(M_i)\leq \ell_\gamma(X_g)=\sL_{k-1}(X_g)\leq c_{k-1}\ln \left(\area(X_g)\right).
\ene
For large $g$, at least one component $M_i$ is not of type $S_{1,1}$ or $S_{0,3}$, and we can assume it is $M_1$. Then it
 follows from (\ref{infactineq})  and (\ref{prop35}) that \be\label{prop36}
\sL_1(M_1)\leq 2c_{k-1}\ln \left(\area(X_g)\right) +4\ln\left( 1+\frac{4\area(M_1)}{c_{k-1}\ln \left(\area(X_g)\right)}  \right). 
\ene
Along with $\gamma$, the multi-curve realizing $\sL_1(M_1)$ will separate $X_g$ into $k+1$ components. So by (\ref{prop35}) and (\ref{prop36}) we have
\begin{equation*}
\sL_k(X_g)\leq 3c_{k-1}\ln \left(\area(X_g)\right)+4\ln\left( 1+\frac{4\area(M_1)}{c_{k-1}\ln \left(\area(X_g)\right)}  \right),
\end{equation*}
which implies the conclusion  \begin{equation*}
\sL_{k}(X_g)\leq c_k\ln\left(\area(X_g)\right)
\end{equation*}
for suitable choice of $c_k$ only depending on $k$, since $\area(M_1)\leq \area(X_g)=4\pi(g-1)$.
\end{proof}

\subsection*{Eigenvalues for geometric Laplacian} 
  Use $\phi_k$ to represent the $k^{th}$ normalized eigenfunction of $\Delta$ with respect to    the $k^{th}$ eigenvalue $\lambda_k(X_g)$ of $X_g$. It is easy to see that  $\phi_0=\frac{1}{\sqrt{\area(X_g)}}$. For $k\geq 1$, the $k^{th}$ eigenvalue of $\Delta$ can  be expressed as  
 $$
 \lambda_k(X_g)=\inf\limits_{\left<f,\phi_i\right>=0,i<k} \frac{\int_{X_g}|\nabla f|^2d\mu}{\int_{X_g}f^2d\mu},
 $$
 where $\left<f,\phi_i\right>$ means $\int_{X_g}f\phi_id\mu$, and $f$ runs over the Sobolev space  $W^{1,2}(X_g)$ or $C^\infty(X_g)$, which is dense in $W^{1,2}(X_g)$. The infimum of this Rayleigh quotient  can be achieved by some   eigenfunction $\phi_k\in C^\infty(X_g)$.
 
For  $X_{g,n}$ with non-empty boundary $\partial X_{g,n}$, the first positive eigenvalue $\sigma_1(X_{g,n})$ for Neumann boundary condition is  $$
 \sigma_1(X_{g,n})=\inf\limits_{\int_{X_{g,n}} fd\mu=0} \frac{\int_{X_{g,n}}|\nabla f|^2d\mu}{\int_{X_{g,n}}f^2d\mu},
 $$
 where $f$ runs over $W^{1,2}(X_{g,n})$. The infimum can be achieved by some Neumann eigenfunction $f\in C^\infty(X_{g,n})$, and the partial derivative $\frac{\partial f }{\partial \Vec{n}}$ in the normal direction  will vanish  at $\partial X_{g,n}$.
 
  Generally the  $k^{th}$ eigenvalue can be estimated by the following max-minimal  principle. See \eg  \cite[Theorem $3.34$]{bergeron2016spectrum} for its proof.

\begin{theorem}[Max-minimal  principle]\label{maxminpr}
(1) If $X=N_1\cup\cdots\cup N_k $ is a partition of surface $X$, with $\area(N_i)>0$ and $\area(N_i\cap N_j)=0$, then $$
\lambda_{k}(X)\geq \min_{1\leq i\leq k}\sigma_1(N_i).
$$
(2) If $f_1,\cdots,f_{k+1}$ are $k+1$ functions on $X$ with $L^2$ norm $1$ and the supports of any two of them have intersections of measure zero, then $$
\lambda_k(X)\leq \max\limits_{1\leq i\leq k+1}\int_{X}|\nabla f_i|^2d\mu.$$
\end{theorem}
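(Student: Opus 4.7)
The plan is to deduce both parts of the max-minimal principle from the Courant minimax characterization of eigenvalues, which extends the Rayleigh quotient formula already given in the paper: for each $m\geq 0$,
$$\lambda_m(X) = \inf_{\substack{V\subset W^{1,2}(X)\\\dim V = m+1}} \sup_{0\neq f\in V} \frac{\int_{X} |\nabla f|^2 d\mu}{\int_{X} f^2 d\mu}.$$
Part (1) will then come from a dimension-counting argument combined with the definition of $\sigma_1$ on each piece, and Part (2) will come from a direct computation exploiting the disjoint-support hypothesis.

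For Part (1), I would take the first $k+1$ orthonormal eigenfunctions $\phi_0,\phi_1,\dots,\phi_k$ of $X$ and let $V$ be their linear span, so that $\dim V = k+1$ and $\frac{\int_X |\nabla f|^2\, d\mu}{\int_X f^2\, d\mu}\leq \lambda_k(X)$ for every nonzero $f\in V$. The key step is to consider the linear averaging map $\Phi\colon V\to \R^k$ defined by
$$\Phi(f)=\Bigl(\tfrac{1}{\area(N_1)}\textstyle\int_{N_1} f\,d\mu,\ \dots,\ \tfrac{1}{\area(N_k)}\int_{N_k} f\,d\mu\Bigr).$$
Since $\dim V > k$, the kernel of $\Phi$ contains a nonzero $f$, and then $f|_{N_i}$ has zero mean on each $N_i$, so it is admissible in the Neumann variational problem for $N_i$. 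This yields $\int_{N_i}|\nabla f|^2 d\mu \geq \sigma_1(N_i)\int_{N_i} f^2 d\mu \geq \bigl(\min_j \sigma_1(N_j)\bigr) \int_{N_i} f^2 d\mu$. Summing over $i$ and dividing by $\int_X f^2 d\mu$ bounds the Rayleigh quotient of $f$ from below by $\min_j \sigma_1(N_j)$, and comparing with the upper bound $\lambda_k(X)$ gives the desired inequality.

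For Part (2), the functions $f_1,\dots,f_{k+1}$ are linearly independent (their supports pairwise intersect in measure-zero sets and each is normalized), so their span $V$ has dimension $k+1$. For any $f=\sum_{i=1}^{k+1} c_i f_i\in V$, the disjoint-support condition makes the mixed terms $\int_X f_i f_j d\mu$ and $\int_X \nabla f_i \cdot \nabla f_j d\mu$ vanish for $i\neq j$, so
$$\int_X f^2 d\mu = \sum_i c_i^2, \qquad \int_X |\nabla f|^2 d\mu = \sum_i c_i^2 \int_X |\nabla f_i|^2 d\mu.$$
The Rayleigh quotient on $V$ is therefore a convex combination of the individual quotients and is uniformly bounded by $\max_i \int_X |\nabla f_i|^2 d\mu$. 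Plugging this particular $V$ into the minimax formula gives the upper bound on $\lambda_k(X)$.

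No step is a genuine obstacle, since both arguments are linear-algebraic once the minimax characterization is in hand. The only point that needs a brief sanity check is in Part (1): the interfaces between the pieces $N_i$ may be curves, but they have two-dimensional measure zero, so a $W^{1,2}$ function on $X$ restricts to a $W^{1,2}$ function on each $N_i$ without loss of gradient mass, and the decomposition $\int_X = \sum_i \int_{N_i}$ used at the final step is valid. This makes the argument robust enough that, as noted in the paper, it can simply be cited from a standard reference such as Theorem~3.34 of \cite{bergeron2016spectrum}.
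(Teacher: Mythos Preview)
Your proposal is correct and is exactly the standard argument; the paper itself does not prove this statement but simply refers the reader to Theorem~3.34 of \cite{bergeron2016spectrum}, whose proof is the one you have written out. There is nothing to compare beyond noting that you have supplied the details the paper chose to cite.
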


The Cheeger inequality \cite{cheeger1970lower} gives lower bounds for first eigenvalues.

\begin{definition*}
For a surface  $\Omega $ with possibly non-empty boundary, the \textit{Cheeger constant} $h(\Omega)$ of $\Omega$ is defined as 
$$
h(\Omega)=\inf_{\Gamma} \frac{\ell(\Gamma \cap \mathring{\Omega})}{\min\{ \area(A),\area(B)\}},
$$
where $\mathring{\Omega}$ is the interior of $\Omega$ ($\mathring{\Omega}=\Omega$ if $\Omega$ is closed), and $\Gamma$  is any   set of  piecewise smooth curves separating $\Omega$ into two parts $A$ and $B$. It is known by an observation due to Yau that $A$ and $B$ can be chosen to be connected.
\end{definition*}

\begin{theorem}[Cheeger inequality]\label{cheineq}
If  $\Omega=X_{g,n}$ is a hyperbolic surface with non-empty boundary, then
$$\sigma_1(\Omega)\geq\frac{1}{4}h^2(\Omega).$$
If  $\Omega=X_g$ is a closed hyperbolic surface,  then
$$\lambda_1(\Omega)\geq\frac{1}{4}h^2(\Omega).$$

 \end{theorem}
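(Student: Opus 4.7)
The plan is to use Cheeger's original coarea argument, which handles the Neumann (boundary) case and the closed case uniformly. The key technical lemma to establish is: for any nonnegative Lipschitz function $u$ on $\Omega$ whose support has area at most $\tfrac12\area(\Omega)$,
\[
\int_\Omega |\nabla u|^2\,d\mu \;\geq\; \frac{h(\Omega)^2}{4}\int_\Omega u^2\,d\mu.
\]

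To prove this lemma, I would apply the coarea formula to $u^2$, giving
\[
\int_\Omega |\nabla(u^2)|\,d\mu \;=\; \int_0^\infty \ell\bigl(\{u^2=t\}\cap \mathring{\Omega}\bigr)\,dt.
\]
By Sard's theorem, for almost every $t>0$ this level set is a smooth $1$-manifold that separates $\Omega$ into a piece containing $\{u^2>t\}\subseteq \Supp(u)$, whose area is therefore at most $\tfrac12\area(\Omega)$. The definition of the Cheeger constant gives $\ell(\{u^2=t\}\cap\mathring{\Omega})\geq h(\Omega)\cdot \area(\{u^2>t\})$, and integrating together with the layer-cake identity $\int u^2\,d\mu=\int_0^\infty \area(\{u^2>t\})\,dt$ produces $\int |\nabla(u^2)|\,d\mu \geq h(\Omega)\int u^2\,d\mu$. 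Cauchy--Schwarz on the left-hand side yields
\[
\int_\Omega |\nabla(u^2)|\,d\mu = 2\int_\Omega u\,|\nabla u|\,d\mu \leq 2\Bigl(\int_\Omega u^2\,d\mu\Bigr)^{\!1/2}\Bigl(\int_\Omega|\nabla u|^2\,d\mu\Bigr)^{\!1/2},
\]
and squaring and rearranging deliver the lemma.

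Next I would apply the lemma to the first eigenfunction. Let $f$ denote a first eigenfunction, Neumann in the case $\Omega=X_{g,n}$ and Laplace--Beltrami in the case $\Omega=X_g$; in either case $\int_\Omega f\,d\mu=0$, so decomposing $f=f_+-f_-$ with $f_\pm\geq 0$ of disjoint supports, at least one of $\Supp(f_\pm)$ has area $\leq \tfrac12\area(\Omega)$. Say $u=f_+$ qualifies. The pointwise identities $f\cdot f_+=f_+^2$ and $\nabla f_+=\chi_{\{f>0\}}\nabla f$, together with the Neumann boundary condition $\partial_{\vec n}f=0$ on $\partial\Omega$ in the boundary case (there is no boundary in the closed case), allow the integration by parts
\[
\int_\Omega|\nabla f_+|^2\,d\mu = \int_\Omega \langle \nabla f,\nabla f_+\rangle\,d\mu = -\int_\Omega f_+\,\Delta f\,d\mu = \lambda \int_\Omega f_+^2\,d\mu,
\]
where $\lambda\in\{\sigma_1(\Omega),\lambda_1(\Omega)\}$ is the relevant first eigenvalue. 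Feeding this into the lemma with $u=f_+$ yields $\lambda\geq h(\Omega)^2/4$, as required.

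The one delicate point is the coarea step applied to $u^2=f_+^2$, which is only Lipschitz across the nodal set $\{f=0\}$. This is handled either by invoking the Federer coarea formula for Lipschitz functions, or by replacing $f_+$ with the smooth truncation $\max(f-\epsilon,0)$, applying the argument to each (now genuinely smooth) truncation, and letting $\epsilon\to 0^+$; either route is standard and should not be regarded as a serious obstacle. Should one wish to bypass the matter entirely, the paper already cites Cheeger \cite{cheeger1970lower}, so a short sketch along these lines is all that is really needed.
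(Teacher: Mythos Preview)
Your sketch is the standard coarea argument due to Cheeger and is correct as written; the only point to note is that the paper does not actually supply its own proof of Theorem~\ref{cheineq} --- it is quoted as a known background result with a citation to \cite{cheeger1970lower}. So there is nothing to compare against, and your proposal simply recovers the classical argument behind the cited reference.
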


\subsubsection{Eigenvalues for $S_{1,1}$ and $S_{0,3}$} For a compact hyperbolic surface $X$ of type $S_{1,1}$ or $S_{0,3}$, the geometric quantity $\sL_1(X)$ is not meaningful. Its first eigenvalue with Neumann boundary condition can be controlled solely by its boundary lengths.  Before proving that, we recall the following two classical  results.
The first one is the  isoperimetric inequality.  See \eg Section $8.1$ in \cite{buser2010geometry}.

\begin{lemma}[Isoperimetric inequalities]\label{isoperimetric}
 Let $\Omega$ be  a domain with piecewise smooth boundary $\partial \Omega$, equipped with the hyperbolic metric. If $\Omega$ is topologically a disk or a cylinder,  then 
$$
\ell(\partial \Omega)\geq \area(\Omega).
$$
\end{lemma}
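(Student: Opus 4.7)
The plan is to handle the two topological types separately, with the disk case being classical and the cylinder case requiring more care.

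For the \emph{disk case}, I would develop $\Omega$ isometrically into the hyperbolic plane $\H^2$: since $\Omega$ is simply connected and carries a metric of constant curvature $-1$, the developing map is a local isometry and, by simple connectedness, embeds $\Omega$ onto a planar domain $\tilde\Omega \subset \H^2$ with the same area $A$ and boundary length $L$. One then applies the classical hyperbolic isoperimetric inequality for simply connected planar domains,
\[
L^2 \geq 4\pi A + A^2,
\]
with equality only for geodesic disks. This can be established, for instance, by Schwarz symmetrization, which reduces to the case of a round disk where the inequality is a direct computation from $A = 2\pi(\cosh r - 1)$ and $L = 2\pi \sinh r$. It follows that $L \geq \sqrt{A(A+4\pi)} \geq A$, settling the disk case.

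For the \emph{cylinder case}, the natural strategy is to foliate $\Omega$ by level sets of the distance function $\rho(x) = d(x, \gamma_1)$ from one boundary component $\gamma_1$. Writing $D = \sup_\Omega \rho$ and $L(t) = \mathcal{H}^1(\rho^{-1}(t))$, the coarea formula gives
\[
\area(\Omega) = \int_0^D L(t)\,dt.
\]
In the curvature $-1$ metric, the first-variation-of-length formula together with the Ricatti equation $\kappa_g'(t) = 1 - \kappa_g^2(t)$ satisfied by the geodesic curvature of equidistant curves yields exponential-type growth control on $L(t)$. Combined with the fact that the total length of level sets near the $\rho$-maximum is bounded by $\ell(\gamma_2)$, this controls $\area(\Omega)$ in terms of $\ell(\gamma_1) + \ell(\gamma_2) = \ell(\partial \Omega)$.

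The main obstacle is executing the cylinder case rigorously. The distance function $\rho$ fails to be smooth along its cut locus inside $\Omega$, the level sets $\Sigma_t$ may acquire branch points, $\partial \Omega$ has corners that complicate the analysis near the boundary, and the maximum of $\rho$ need not even be attained on $\gamma_2$ (it could be at an interior local maximum due to an inward bulge of $\gamma_2$). The cleanest remedy is to approximate $\partial \Omega$ by a smooth curve at a small inward distance, work with the distributional Laplacian of $\rho$ (which is suitably controlled by Laplacian comparison in constant negative curvature), and pass to the limit. Alternatively, and more in the spirit of the present paper, one may simply invoke the explicit proofs of these isoperimetric inequalities for disks and cylinders carried out in Section~8.1 of Buser's monograph \cite{buser2010geometry}, which treats exactly this setting.
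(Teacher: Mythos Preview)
The paper does not actually prove this lemma: it is stated as a classical fact with a reference to Section~8.1 of Buser's monograph \cite{buser2010geometry}, and no argument is given. Your proposal therefore goes well beyond what the paper does. Your disk case is correct and standard; in the paper's setting $\Omega$ is always a subdomain of a complete hyperbolic surface, so the lift to $\H^2$ is indeed an embedding (for an abstract hyperbolic disk the developing map is a priori only an immersion, but the inequality $L^2\geq 4\pi A+A^2$ still holds in that generality, so this is at most a cosmetic point). Your cylinder sketch via equidistants from one boundary component has the right flavor, and you are honest that the cut locus and corner issues make a clean execution nontrivial; your final fallback to Buser's Section~8.1 is exactly the route the paper takes. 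In short: your proof is correct and strictly more informative than the paper's own treatment, which is a bare citation.
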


The second one is as follows. 
See \eg Theorem 4.2.2 in \cite{buser2010geometry} for details.
\begin{lemma}\label{nonsimple4arc}
If $\gamma\subset X$ is a non-simple closed geodesic on  $X$, then $$\ell_\gamma(X)\geq 4\arcsinh1.$$
\end{lemma}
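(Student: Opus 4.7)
The plan is to exploit a self-intersection of $\gamma$ by lifting to the universal cover $\mathbb{H}^2$ and combining a Gauss--Bonnet obstruction, hyperbolic trigonometry, and the collar lemma (Lemma \ref{collar lemma}).

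First I would pick a self-intersection point $p\in X$ of $\gamma$, at which the two branches of $\gamma$ meet at some angle $\theta\in(0,\pi)$. Cutting $\gamma$ at $p$ produces two geodesic arcs $\gamma_1,\gamma_2$ based at $p$, each a piecewise-geodesic loop with a single corner of exterior angle $\theta$ at $p$, and $\ell_{\gamma_1}(X)+\ell_{\gamma_2}(X)=\ell_\gamma(X)$. A Gauss--Bonnet computation rules out either $\gamma_i$ bounding an embedded disk $D$: such a $D$ would force $-\area(D)+\theta=2\pi$, impossible since $\theta\leq \pi$. So each $\gamma_i$ is homotopically non-trivial.

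Next, I would lift to $\mathbb{H}^2$: fixing a lift $\tilde p$ of $p$, each $\gamma_i$ lifts to a geodesic segment from $\tilde p$ to $A_i\tilde p$ with $A_i\in \pi_1(X)$ hyperbolic, and the two segments at $\tilde p$ make angle $\theta$. Applying the hyperbolic law of cosines in the triangle with vertices $\tilde p, A_1\tilde p, A_2\tilde p$ gives
\[
\cosh d(A_1\tilde p, A_2\tilde p)=\cosh\ell_{\gamma_1}(X)\cosh\ell_{\gamma_2}(X)-\sinh\ell_{\gamma_1}(X)\sinh\ell_{\gamma_2}(X)\cos\theta,
\]
while the translates $A_1\tilde p, A_2\tilde p$ are distinct lifts of $p$ whose separation is controlled from below through the non-trivial element $A_1^{-1}A_2\in\pi_1(X)$, by the collar lemma applied to the closed geodesic freely homotopic to $A_1^{-1}A_2$ (or equivalently by an injectivity-radius estimate at $p$). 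Combining these inequalities and optimizing over $(\theta,\ell_{\gamma_1}(X),\ell_{\gamma_2}(X))$ subject to $\ell_{\gamma_1}(X)+\ell_{\gamma_2}(X)=\ell_\gamma(X)$ should yield $\ell_\gamma(X)\geq 4\arcsinh 1$, with the extremal configuration corresponding to a right-angled crossing with equal sub-arcs ($\theta=\pi/2$, $\ell_{\gamma_1}(X)=\ell_{\gamma_2}(X)=2\arcsinh 1$, giving $\sinh(\ell_\gamma(X)/4)=1$).

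The main obstacle I anticipate is the optimization step: threading the trigonometric identity with the collar/injectivity bound tightly enough to recover the sharp constant $4\arcsinh 1$ rather than a weaker qualitative bound. A cleaner alternative is an induction on length: assume $\gamma$ is the shortest non-simple closed geodesic on $X$; then the geodesic representative $\delta_i$ of each arc $\gamma_i$ (freely homotopic to $\gamma_i$ and strictly shorter than $\gamma_i$ because of the corner, hence strictly shorter than $\gamma$) is forced to be a \emph{simple} closed geodesic by minimality. One may then apply Lemma \ref{collar lemma} to $\delta_i$ directly: a collar of half-width $\arcsinh(1/\sinh(\ell(\delta_i)/2))$ around $\delta_i$ must accommodate the arc $\gamma_i$ emanating from a point $p$ with a corner, which quantitatively forces $\ell_{\gamma_i}(X)\geq 2\arcsinh 1$ for $i=1,2$, and hence $\ell_\gamma(X)\geq 4\arcsinh 1$.
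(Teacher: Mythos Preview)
The paper does not give its own proof of this lemma; it simply cites Buser's book \cite[Theorem 4.2.2]{buser2010geometry}. So your proposal is being compared against the standard argument there, and it has a genuine gap in both routes.

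In your first approach, the quantity $d(A_1\tilde p,A_2\tilde p)$ is bounded below by the translation length of $A_1^{-1}A_2$, but that element can correspond to an arbitrarily short simple closed geodesic on $X$ (and the injectivity radius at $p$ can likewise be arbitrarily small). So the law-of-cosines inequality carries no usable lower bound, and no optimization over $(\theta,\ell_{\gamma_1},\ell_{\gamma_2})$ will manufacture the constant $4\arcsinh 1$ from it. You already flagged this step as the obstacle; it is in fact fatal, not just delicate.

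In your second approach the specific claim ``the collar around $\delta_i$ forces $\ell_{\gamma_i}(X)\geq 2\arcsinh 1$'' is false. If $\delta_1$ is very short (length $\epsilon$), the loop $\gamma_1$ can sit near $\delta_1$ inside its collar and have length only slightly larger than $\epsilon$; the corner at $p$ does not prevent this. What rescues the argument is not a bound on each $\ell_{\gamma_i}$ separately but the interaction between $\delta_1$ and $\delta_2$: one shows (and this step needs its own justification, e.g.\ via the transverse crossing of $\gamma_1,\gamma_2$ at $p$ and intersection numbers) that the two \emph{simple} closed geodesics $\delta_1,\delta_2$ must intersect. Then Lemma~\ref{collar lemma} applied to $\delta_1$ forces $\delta_2$ to cross the collar of $\delta_1$, giving $\sinh(\ell(\delta_1)/2)\sinh(\ell(\delta_2)/2)\geq 1$, and convexity of $\sinh$ yields $\ell(\delta_1)+\ell(\delta_2)\geq 4\arcsinh 1$. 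Since $\ell_\gamma(X)=\ell_{\gamma_1}(X)+\ell_{\gamma_2}(X)\geq \ell(\delta_1)+\ell(\delta_2)$, the sharp bound follows. Your induction-on-length reduction to simple $\delta_i$ is the right first move; the missing idea is that the collar lemma must be applied to the \emph{pair} $(\delta_1,\delta_2)$, not to each $\delta_i$ against its own $\gamma_i$.
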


With the help of the  two lemmas above, we will prove two   lower bounds of the first   eigenvalues with respect to  Neumann boundary conditions for $S_{1,1}$ and $S_{0,3}$, which will be applied later.

\bl\label{eigens03s11}
If $X$ is of type $S_{0,3}$ or $S_{1,1}$ with each boundary component of  length less than  $L$, then there exists a constant $K(L)>0$ only depending on $L$ such that $$\sigma_1(X)\geq K(L).$$
\el
\bp
 Consider a division of $X$ into $A\cup B$ with  $\partial A \cap \mathring{X}=\partial B\cap \mathring{X}=\mathring{\Gamma}$, where $\mathring{\Gamma}=\Gamma \setminus \partial \Gamma$ is the interior of the (multi)-curve $\Gamma$ obtained by removing its boundary points on $\partial X$, and $h(X)\leq \frac{\ell(\Gamma)}{\min\{\area(A),\area(B)\}}\leq 2h(X)$. It follows that $$
 h(X)\geq \frac{\ell(\Gamma)}{2\pi}.
 $$
 By the Collar Lemma, \ie, Lemma \ref{collar lemma}, each boundary geodesic admits a half collar of width $w(L)>0$.
 If $\Gamma$  intersects with the boundary $\partial X$ and is not homotopic to an arc on $\partial X$, then $\ell(\Gamma)\geq 2w(L)$. Therefore, we have \be\label{s0311}
 h(X)>\frac{w(L)}{\pi}.\ene
If $\Gamma$ contains a non-simple component  and  $\Gamma\cap \partial   X=\emptyset $, then by Lemma \ref{nonsimple4arc} $\ell(\Gamma)\geq 4\arcsinh 1.$ Therefore, we have
\be \label{s0312}
h(X)>\frac{2\arcsinh 1}{\pi}.
\ene
If neither of the the above two situations occurs, then $\Gamma$ will bound  disks or  cylinders, assuming the union of which  is $A$.  By  Lemma \ref{isoperimetric}, we have $\area(A)\leq \ell(\partial A)$. However, some parts of  $\partial A$ may lie  on  $\partial X$: those arcs are homotopic to some sub-arcs of $\Gamma$. Therefore we always have $\ell(\partial A)\leq 2 \ell(\Gamma)$.
This gives \be\label{s0313}
h(X)\geq \frac{1}{4}.\ene
Then the  lemma follows from  the  Cheeger inequality, \ie,  Theorem  \ref{cheineq}, along with equations  (\ref{s0311}), (\ref{s0312}) and (\ref{s0313}).
\ep
Viewing $X$ as a subsurface of $X_g$,
if the boundary lengths of $X$ are not bounded from above by any  constant $L$, but  grow slowly as $g\to \infty$, then we have the following estimate.

\begin{lemma}\label{s03s11lngsigma}
If  $X$ is of  type $S_{1,1}$ or $S_{0,3}$, with each boundary component of  length less than   $\ln g$ for $g\geq 2$, then we have
$$\sigma_1(X)\succ \frac{1}{g}.
$$
\end{lemma}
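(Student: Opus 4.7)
The plan is to imitate the proof of Lemma \ref{eigens03s11} but carefully track how each estimate degrades when the boundary length bound $L$ is replaced by $\ln g$. The only place where the length $L$ of a boundary component entered the earlier argument was through the half-width $w(L)$ of the standard boundary collar, so this is the single quantity that needs quantitative control.

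First I would invoke the Cheeger inequality (Theorem \ref{cheineq}), which reduces the problem to proving $h(X)\succ 1/\sqrt{g}$. Exactly as in the proof of Lemma \ref{eigens03s11}, I fix a nearly optimal decomposition $X=A\cup B$ with separating (multi)-curve $\Gamma$, and split into the same three cases depending on how $\Gamma$ meets $\partial X$.

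The only case that is sensitive to the boundary length is the one where $\Gamma$ meets $\partial X$ and is not homotopic into $\partial X$. There $\Gamma$ must cross the half-collar of some boundary geodesic $\gamma$, so $\ell(\Gamma)\geq 2w(\gamma)$ with
\[
w(\gamma)=\arcsinh\!\left(\frac{1}{\sinh(\ell_\gamma(X)/2)}\right).
\]
Using $\ell_\gamma(X)<\ln g$, I would check that
\[
\sinh(\ell_\gamma(X)/2)\leq \sinh(\tfrac{1}{2}\ln g)\prec \sqrt{g},
\]
and then, since $\arcsinh$ is concave with $\arcsinh(x)\asymp x$ for small $x$, conclude $w(\gamma)\succ 1/\sqrt{g}$. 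Combined with $\area(X)=2\pi$, this gives $h(X)\succ 1/\sqrt{g}$ in this case. The other two cases (a non-simple component of $\Gamma$ disjoint from $\partial X$, or $\Gamma$ bounding disks/cylinders) go through verbatim as in Lemma \ref{eigens03s11} using Lemmas \ref{nonsimple4arc} and \ref{isoperimetric}, and give uniform bounds on $h(X)$ independent of $g$.

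Putting everything together yields $h(X)\succ 1/\sqrt{g}$ in all cases, and the Cheeger inequality then produces $\sigma_1(X)\succ 1/g$ as desired. I do not expect any serious obstacle: the entire proof is a quantitative refinement of Lemma \ref{eigens03s11}, and the one non-trivial step is verifying the asymptotic $\arcsinh(1/\sinh(\tfrac{1}{2}\ln g))\asymp 1/\sqrt{g}$, which is a short direct calculation.
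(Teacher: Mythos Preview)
Your proposal is correct and follows essentially the same approach as the paper's own proof: both reduce to the Cheeger inequality, run the identical three-case analysis from Lemma~\ref{eigens03s11}, and in the only $g$-dependent case estimate the boundary half-collar width as $\arcsinh\bigl(1/\sinh(\tfrac{1}{2}\ln g)\bigr)\succ g^{-1/2}$ to conclude $h(X)\succ g^{-1/2}$. The paper carries out this collar-width estimate explicitly via $\arcsinh(2/\sqrt{g})=\ln\bigl(2/\sqrt{g}+\sqrt{1+4/g}\bigr)\geq k/\sqrt{g}$, which is exactly the calculation you outlined.
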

\begin{proof}
By  Lemma \ref{collar lemma}, each boundary component of  $X$ admits a half collar of width $$
\begin{aligned}
w=&\arcsinh\frac{1}{\sinh\frac{\ln g}{2}}
\geq \arcsinh\frac{2}{g^{\frac{1}{2}}}\\
=&\ln\left(\frac{2}{g^{\frac{1}{2}}}+\sqrt{1+\frac{4}{g}   }\right)
\geq \frac{k}{g^\frac{1}{2}}\\
\end{aligned}
$$
for some uniform constant $k>0$. Consider a division of $X$ into $A\cup B$ with  $\partial A \cap \mathring{X}=\partial B\cap \mathring{X}=\mathring{\Gamma}$ and $h(X)\leq \frac{\ell(\Gamma)}{\min\{\area(A),\area(B)\}}\leq 2h(X)$.
If some part  $\gamma$ in $\Gamma$ intersects $\partial X$ and isn't homotopic to an arc on $\partial X$ , then $\ell(\gamma)\geq 2w$, so $
h(X)\geq \frac{w}{\pi}.
$
If $\Gamma$ have non-simple closed part, by Lemma \ref{nonsimple4arc} we have $  \ell(\Gamma)\geq 4\arcsinh 1$, so $h(X)\geq \frac{2\arcsinh 1}{\pi}$.
Otherwise, similar to the proof of Lemma \ref{eigens03s11}, it follows from Lemma \ref{isoperimetric} that $\frac{\ell(\Gamma)}{\min\{\area(A),\area(B)\}}\geq \frac{1}{2}$, which means $h(X)\geq \frac{1}{4}$. Then it follows from Theorem \ref{cheineq} that 
$$
\sigma_1(X)\geq \min\left\{\frac{1}{64}, \ \left(\frac{\arcsinh 1}{\pi}\right)^2, \ \frac{k^2}{4\pi^2g}\right\}\asymp \frac{1}{g},
$$
which completes the proof.
\end{proof}

\section{Uniform lower bounds for first positive Neumann eigenvalues}\label{inequalitiesfirst}

It was proven in \cite{wu2021optimal} that  
\begin{theorem}[Wu-Xue]\label{mt-WX-1}
For any closed hyperbolic surface $X_g$ of genus $g$, we have
\[\lambda_1(X_g)\succ \frac{\sL_1(X_g)}{g^2}.\]
\end{theorem}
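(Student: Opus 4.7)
The plan is to establish $\lambda_1(X_g)\succ \mathcal{L}_1(X_g)/g^2$ by combining Cheeger's inequality with a direct eigenfunction analysis that captures how the variation of an eigenfunction can spread across multiple thin necks. I split into two regimes according to the size of $\mathcal{L}_1(X_g)$.

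\textbf{Moderate regime.} If $\mathcal{L}_1(X_g)\succ 1$, Cheeger's inequality (Theorem \ref{cheineq}) gives $\lambda_1(X_g)\geq h(X_g)^2/4$. To lower bound $h(X_g)$, I would replace any piecewise smooth separating curve near-realizing the Cheeger quotient by its geodesic straightening in the same homology class, which is still separating and at most as long. When both sides have area $\succ g$, this straightening is an honest separating multi-geodesic of length $\geq \mathcal{L}_1(X_g)$; otherwise the isoperimetric inequality (Lemma \ref{isoperimetric}) controls the small-area case. Either way $h(X_g)\succ \mathcal{L}_1(X_g)/g$, and hence $\lambda_1(X_g)\succ \mathcal{L}_1(X_g)^2/g^2\succ \mathcal{L}_1(X_g)/g^2$.

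\textbf{Thin regime.} If $\mathcal{L}_1(X_g)\ll 1$, Cheeger only delivers $\lambda_1\succ \mathcal{L}_1^2/g^2$, weaker than the target by a factor of $\mathcal{L}_1$. Sharpness examples such as $\mathcal{Z}_g$ from \cite{WX18} indicate that an eigenfunction may distribute its variation across many short necks, each contributing a jump of order $1/g$ rather than $1$. My approach would be to fix a Bers-type pants decomposition and analyze the first eigenfunction $\phi$ per pair of pants. If $\phi$ has significant $L^2$-oscillation on some pants, a Neumann-eigenvalue lower bound on that piece of bounded geometry contributes the required Dirichlet energy. Otherwise $\phi$ is nearly constant on each pants, and the problem reduces to a discrete eigenvalue problem on the graph $G$ dual to the pants decomposition (with $|V(G)|\asymp g$, vertex weights given by pants areas, edge weights $\ell(\gamma_j)\geq \mathcal{L}_1(X_g)$). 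A direct calculation on the collar cylinder with metric $d\rho^2+\ell^2\cosh^2\rho\,dt^2$ (Lemma \ref{collar lemma}) bounds the Dirichlet energy across each edge below by $\succ \ell(\gamma_j)(\tilde\phi_u-\tilde\phi_v)^2$ for the pants-constants $\tilde\phi_u,\tilde\phi_v$ on adjacent vertices. The hardest step is making this continuous-to-discrete reduction quantitative so that $\phi$'s Dirichlet energy and $L^2$ norm translate into discrete analogs on $G$ with uniform constants; the desired bound $\mathcal{L}_1(X_g)/g^2$ then follows from a graph Cheeger-type estimate on $G$ exploiting $|V(G)|\asymp g$.
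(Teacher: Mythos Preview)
Your two-regime split mirrors the approach of \cite{wu2021optimal} (which this paper cites for Theorem~\ref{mt-WX-1} and outlines in Proposition~\ref{keypartwuxue}): Cheeger-type arguments when $\mathcal{L}_1$ is bounded below, and an eigenfunction analysis when $\mathcal{L}_1$ is small. The thin regime, however, has real gaps. First, the assertion ``edge weights $\ell(\gamma_j)\geq \mathcal{L}_1(X_g)$'' is false: a single cuff in a Bers pants decomposition may be a short non-separating geodesic of length far below $\mathcal{L}_1(X_g)$; only \emph{edge cuts} of the dual graph have total weight $\geq\mathcal{L}_1(X_g)$. Second, with that corrected, a graph Cheeger inequality gives only $h_G\succ\mathcal{L}_1/g$ and hence $\lambda_1(G)\succ\mathcal{L}_1^2/g^2$, which is exactly the loss you set out to repair. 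Third, a Bers decomposition can produce pants with boundary lengths up to order $g$; such pants have tiny Neumann eigenvalue (cf.\ Lemma~\ref{s03s11lngsigma}, already weak at boundary length $\ln g$), so ``nearly constant on each pants with uniform constants'' does not follow.

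The actual argument (see Proposition~\ref{keypartwuxue}) repairs all three points at once by decomposing only along the geodesics of length $<2\epsilon$. The complementary thick pieces $M_i$ then have injectivity radius $\geq\epsilon$, and elliptic regularity gives a pointwise gradient bound yielding $\sum_i\mathrm{osc}_{M_i}(\phi)\prec_\epsilon\sqrt{g\,\sigma_1}$. Combined with $\max\phi-\min\phi\succ g^{-1/2}$, a total jump of order $g^{-1/2}$ must be absorbed across the short collars. The closing step is \emph{not} graph Cheeger but a Cauchy--Schwarz rearrangement: one locates a separating sub-family of these collars and uses $\sum_{\text{cut}}\sqrt{\ell_i}\geq\sqrt{\sum_{\text{cut}}\ell_i}\geq\sqrt{\mathcal{L}_1}$ (valid since each $\ell_i<2\epsilon<1$) together with the collar energy bound $\int_T|\nabla\phi|^2\geq\tfrac{\ell}{4}\delta^2$ to obtain $\sigma_1\succ\frac{1}{g}\bigl(\sqrt{\mathcal{L}_1}\cdot g^{-1/2}\bigr)^2=\mathcal{L}_1/g^2$. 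This rearrangement, not a discrete spectral gap, is the missing idea in your sketch.
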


In this section we prove Theorem \ref{mt-2}, extending the above result to first Neumann eigenvalues for compact hyperbolic surface $X_{g,n}$ with non-empty geodesic boundaries, where $X_{g,n}$ is not of type $S_{1,1}$ and $S_{0,3}$. This plays a key role in the proof of our main result Theorem \ref{mt-1}. We follow the idea in \cite{wu2021optimal} to prove Theorem \ref{mt-2}. Before proving it, we provide the following two propositions as summaries of special cases encountered in the proof. Firstly we  fix a uniform constant 
\begin{equation}
0<\epsilon<0.05.
\end{equation}

\begin{proposition}\label{keypartepsilon}
Let $X_{g,n}$ be the hyperbolic surface in Theorem \ref{mt-2} and satisfy $\sL_1(X_{g,n})\geq \epsilon$. Then there is a constant $\hat{c}_\epsilon(L)>0$ only depending on $L$ and $\epsilon$ such that $$
\frac{\sigma_1(X_{g,n})}{\sL_1(X_{g,n})}\geq \frac{\hat{c}_\epsilon(L)}{|\area(X_{g,n})|^2}.
$$\end{proposition}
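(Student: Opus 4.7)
The plan is to apply the Cheeger inequality (Theorem~\ref{cheineq}) and reduce the proposition to establishing
\[
h(X_{g,n}) \;\geq\; c(L,\epsilon)\,\frac{\sqrt{\sL_1(X_{g,n})}}{\area(X_{g,n})}
\]
for some constant $c(L,\epsilon)>0$; the bound $\sigma_1(X_{g,n})\geq h(X_{g,n})^2/4$ then yields the stated inequality. I would follow the strategy of Wu--Xue \cite{wu2021optimal} for the closed case, with the new ingredient being the control of Cheeger-minimizer components that terminate on $\partial X_{g,n}$.

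Take a near-optimal Cheeger partition $X_{g,n}=A\cup B$ with connected pieces separated by $\Gamma\subset\mathring{X}_{g,n}$, arranged so that $\area(A)\leq\area(B)$. Straighten each closed component of $\Gamma$ to a simple closed geodesic in the interior, and each arc component to a geodesic arc meeting $\partial X_{g,n}$ orthogonally; length only decreases under this procedure, so I may assume $\Gamma$ is already in this normal form. \emph{Case 1.} If the interior closed components alone already separate $X_{g,n}$ into two pieces of positive area, then by definition of $\sL_1$ their total length is at least $\sL_1(X_{g,n})$, and dividing by $\area(A)\leq\area(X_{g,n})$ together with $\sL_1\geq\epsilon$ gives $h(X_{g,n})\geq \sL_1/\area\geq \sqrt{\epsilon}\cdot\sqrt{\sL_1}/\area$, which is stronger than required. \emph{Case 2.} The arcs of $\Gamma$ are essential for separation; close each arc into a simple closed curve by appending an appropriate sub-arc of the adjacent boundary geodesic, paying an additive cost of at most $L$ per endpoint. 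For a suitable choice of sub-arcs the resulting closed multi-geodesic $\Gamma^\ast$ still separates $X_{g,n}$ into two positive-area pieces, whence $\ell(\Gamma^\ast)\geq \sL_1(X_{g,n})$. Since the number of arcs in $\Gamma$ is bounded linearly in $|\chi(X_{g,n})|$, this gives $\ell(\Gamma)\geq \sL_1(X_{g,n}) - c'(L)\cdot|\chi(X_{g,n})|$. Combined with the isoperimetric inequality (Lemma~\ref{isoperimetric}) applied to $A$, which together with the collar lemma (Lemma~\ref{collar lemma}) forces $\area(A)\geq c''(L)>0$ whenever $\ell(\Gamma)$ is small (otherwise $A$ would be a union of small disks and collar pieces ruled out by the uniform bound $\ell(\partial X_{g,n})\leq nL$), one extracts the desired Cheeger bound.

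The main obstacle lies in Case~2: when separation relies on boundary-touching arcs, the closure $\Gamma^\ast$ could be boundary-parallel or null-homotopic if the sub-arcs are chosen carelessly, in which case the inequality $\ell(\Gamma^\ast)\geq\sL_1(X_{g,n})$ fails and gives no useful information. The assumption $\sL_1(X_{g,n})\geq\epsilon$ is precisely what rules out such degeneracies, since it forces any genuine separating multi-geodesic to have length bounded below by $\epsilon$, while the collar lemma around boundary geodesics of length at most $L$ gives the quantitative control over how short the appended sub-arcs can be. Balancing these contributions, and in particular showing that the additive error $c'(L)\cdot|\chi(X_{g,n})|$ is dominated by $\sL_1(X_{g,n})$ (possibly via a further subdivision argument or by iterating the closure construction when $|\chi|$ is large), is the delicate technical core that produces the single constant $\hat{c}_\epsilon(L)$.
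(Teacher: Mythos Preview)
Your Case~2 is where the argument breaks. You propose to close each boundary arc of $\Gamma$ with a sub-arc of $\partial X_{g,n}$, obtaining $\Gamma^\ast$ with $\ell(\Gamma^\ast)\geq\sL_1(X_{g,n})$, and then absorb the added boundary length as an error $c'(L)\cdot|\chi(X_{g,n})|$. But $|\chi(X_{g,n})|\asymp\area(X_{g,n})$, while by Proposition~\ref{xiamianyaoyong} one has $\sL_1(X_{g,n})\leq s(L)\ln\bigl(\area(X_{g,n})\bigr)$. Hence for large area the inequality $\ell(\Gamma)\geq \sL_1(X_{g,n})-c'(L)|\chi(X_{g,n})|$ has a negative right-hand side and gives nothing. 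Your suggestion to ``show that the additive error is dominated by $\sL_1$'' is therefore impossible in general, and no iteration or subdivision will repair this: the error genuinely scales linearly in the area while $\sL_1$ grows only logarithmically. (The claim that the number of arcs is linearly bounded by $|\chi|$ is also not justified for a merely near-optimal Cheeger cut.)

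The paper's argument in this boundary-arc case runs in the opposite direction. Rather than bounding $\ell(\Gamma)$ from below by $\sL_1$, it takes a \emph{single} essential arc $\Gamma_0$ hitting $\partial X_{g,n}$, uses the collar lemma to get $\ell(\Gamma_0)\geq 2w(L)$, and then builds from $\Gamma_0$ and the adjacent boundary component(s) a pair of pants whose third boundary curve(s) separate $X_{g,n}$; this gives an \emph{upper} bound $\sL_1(X_{g,n})\leq 2L+2\ell(\Gamma_0)$. Combining $\sigma_1\geq\ell(\Gamma_0)^2/\bigl(16\,\area^2\bigr)$ with this upper bound on $\sL_1$ yields $\sigma_1/\sL_1\geq w(L)^2/\bigl((2L+4w(L))\,\area^2\bigr)$ with no dependence on $|\chi|$. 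The remaining cases are: (i) $A$ consists only of disks and cylinders, where the isoperimetric inequality gives $h\geq 1/4$ directly (your sketch of this case is garbled); and (ii) $\Gamma$ is disjoint from $\partial X_{g,n}$ and contains an essential loop, which is your Case~1 and is handled as you say using $\sL_1\geq\epsilon$.
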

\bp
 Consider a division of $X_{g,n}$ into $A\cup B$ with   $\partial A \cap \mathring{X}_{g,n}=\partial B\cap \mathring{X}_{g,n}=\mathring{\Gamma}$ such that $\area(A)\leq \area(B)$ and $h(X_{g,n})\leq \frac{\ell(\Gamma)}{\area(A)}\leq 2h(X_{g,n})$.
 
\underline{Case-1:} \textit{$A$ contains only disks or cylinders.} Similar as in the proof of Lemma \ref{eigens03s11}, it follows from Lemma  \ref{isoperimetric} that $\frac{\ell(\Gamma)}{\area(A)}\geq \frac{1}{2}$. Hence, $h(X_{g,n})\geq \frac{1}{4}$. By  Proposition \ref{xiamianyaoyong} and  Theorem \ref{cheineq}, we have
\begin{equation}\label{122}
\frac{\sigma_1(M)}{\sL_1(X_{g,n})}
    \geq \frac{1}{64s(L)\ln(|\area(X_{g,n})|)}
    \geq \frac{1}{64s(L)}\frac{1}{|\area(X_{g,n})|^2}.
\end{equation}

\underline{Case-2:} \textit{$\Gamma\cap \partial X_{g,n}=\emptyset$ and $A$ contains some component which is not topologically  a disk or a cylinder.} In this case $\Gamma$ contains a homotopically non-trivial closed curve which is not a boundary component. In particular, $\ell(\Gamma)\geq \sL_1(X_{g,n})$.  By Theorem \ref{cheineq} and the assumption $\sL_1(X_{g,n})\geq \epsilon$, we have 
\begin{equation}\label{125}
    \sigma_1(X_{g,n})\geq \frac{1}{16}\frac{\ell^2(\Gamma)}{|\area(A)|^2}\geq \frac{\epsilon}{4|\area(X_{g,n})|^2}\cdot \sL_1(X_{g,n}).
\end{equation}

\underline{Case-3:} \textit{$A$ is not of the types in case-1 or case-2.} In this case,
 $A$ contains a connected component $A_0$ such that $\partial A_0$ contains
a simple  curve $\Gamma_0$ which intersects with $\partial X_{g,n}$, and does not bound a disk along with $\partial X_{g,n}$. Since  for any component $\gamma\subset \partial X_{g,n}$, $\ell_\gamma(X_{g,n})\leq L$, by Lemma \ref{collar lemma}, all boundary components admit a half collar of width $w(L)=\arcsinh\left(\frac{1}{\sinh \frac{L}{2}}   \right)$. Hence we have $$\ell(\Gamma_0)\geq 2w(L).$$
If $\Gamma_0$ connects two different boundary components $\alpha$ and $\beta$ at $p_0\in \alpha$ and $q_0\in \beta$, then we consider  $\partial N_\epsilon(\alpha\cup\Gamma_0\cup\beta)$, which is homotopic to a simple closed geodesic $\delta$ of length $$\ell_\delta(X_{g,n})\leq \ell_\alpha(X_{g,n})+\ell_\beta(X_{g,n})+2\ell(\Gamma_0) \leq  2L+2\ell(\Gamma_0).$$ Along with $\{\alpha,\beta\}$, $\delta$ bounds a pair of pants. Since $2g+n\geq 4$, $\delta$ is not a boundary geodesic; otherwise $X_{g,n}\cong S_{0,3}$. So we have $$\sL_1(X_{g,n})\leq \ell_\delta(X_{g,n}).$$ In this situation, by Theorem \ref{cheineq} and $\ell(\Gamma_0)\geq 2w(L)$, we have
\begin{equation}\label{123}
\begin{aligned}
    \frac{\sigma_1(X_{g,n})}{\sL_1(X_{g,n})}& \geq \frac{1}{4(2L+2\ell(\Gamma_0))}\left(\frac{\ell(\Gamma_0)}{2\area(A)}\right)^2\\
    &\geq   \frac{w^2(L)}{(2L+4w(L))|\area(X_{g,n})|^2}.\\
    \end{aligned}
\end{equation}
If $\Gamma_0$ has two endpoints on the same boundary component $\alpha$, and the two endpoints cut  $\alpha$  into $\alpha_1$ and $\alpha_2$, then
the piecewise smooth curves $\Gamma_0\cup \alpha_1$ and $\Gamma_0\cup \alpha_2$ will be homotopic to simple closed geodesics $\delta_1$ and $\delta_2$ respectively of lengths $$\ell_{\delta_i}(X_{g,n})\leq \ell(\Gamma_0)+L$$ for $i=1,2$. Along with $\alpha$, $\{\delta_1,\delta_2\}$
 will bound a pair of pants. Since $2g+n\geq 4$, $\delta_1$ and $\delta_2$ can not simultaneously be boundary geodesics; otherwise $X_{g,n}\cong S_{0,3}$. Since $X_{g,n}$ is not of type $S_{1,1}$, $\{\delta_1,\delta_2\}$ separates $X_{g,n}$. In particular, we have
$$\sL_1(X_{g,n})\leq \ell_{\delta_1}(X_{g,n})+\ell_{\delta_2}(X_{g,n}).$$  In this situation, by Theorem \ref{cheineq} and $\ell(\Gamma_0)\geq 2w(L)$, we also have
\begin{equation}\label{124}
   \begin{aligned}
    \frac{\sigma_1(X_{g,n})}{\sL_1(X_{g,n})}
    &\geq \frac{1}{4(2L+2\ell(\Gamma_0))}\left(\frac{\ell(\Gamma_0)}{2\area(A)}\right)^2\\
    &\geq   \frac{w^2(L)}{(2L+4w(L))|\area(X_{g,n})|^2}.\\
    \end{aligned}
\end{equation}

In conclusion, it follows from (\ref{122}), (\ref{125}), (\ref{123}) and (\ref{124}) that
\begin{equation*}
    \frac{\sigma_1(X_{g,n})}{\sL_1(X_{g,n})}\geq \frac{1}{|\area(X_{g,n})|^2}\cdot \min \left\{\frac{1}{64s(L)}, \ \frac{\epsilon}{4},\ \frac{w^2(L)}{2L+4w(L)}\right\}.
\end{equation*}

The proof is completed by setting $\hat{c}_\epsilon(L)=\min \left\{\frac{1}{64s(L)}, \ \frac{\epsilon}{4}, \ \frac{w^2(L)}{2L+4w(L)}\right\}$.
\ep

\begin{rem*}
The assumption that $\epsilon<0.05$  is not required in Proposition \ref{keypartepsilon}. Any fixed constant $\epsilon>0$ is enough. 
\end{rem*}

To prove Theorem \ref{mt-2}, in light of Proposition \ref{keypartepsilon} it suffices to prove the case where $\sL_1(X_{g,n})$ is arbitrarily small, which is implicitly contained in \cite{wu2021optimal}. More precisely, by  Sobolev embedding  theorem\cite{taylor2011partial}  there  exists a constant $c(\epsilon)>0$ only depending on $\epsilon$ such that $$
||\nabla \phi||_{L^\infty(B(\frac{\epsilon}{2}))}\leq c(\epsilon)\sum_{j=0}^\infty||\Delta^j(d\phi)||_{L^2(B(\epsilon))}
$$
for any smooth function $\phi$ on $B(\epsilon)$, a hyperbolic ball in $\mathbb{H}$ of radius $\epsilon$.
\begin{proposition}\label{keypartwuxue}
Let $X_{g,n}$ be the hyperbolic surface in Theorem \ref{mt-2} and satisfy that $\sL_1(X_{g,n})\leq \epsilon$, $\sigma_1(X_{g,n})\leq \frac{1}{4}$, $ \sigma_1(X_{g,n})\leq \frac{1}{1000\epsilon}\frac{\sL_1(X_{g,n})}{|\area(X_{g,n})|^2}$ and $c_1(\epsilon)\sqrt{\frac{\sL_1(X_{g,n})}{1000\epsilon}}\leq \frac{1}{64}$ where $c_1(\epsilon)=\frac{14\epsilon c(\epsilon)}{\sqrt{\area(B(\epsilon))}}$.
 Then we have
$$\frac{\sigma_1(X_{g,n})}{\sL_1(X_{g,n})}\geq \frac{1}{2^{14}|\area(X_{g,n})|^2}.$$
\end{proposition}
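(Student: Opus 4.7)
The plan is to adapt the proof of the Wu--Xue theorem (Theorem \ref{mt-WX-1}) in \cite{wu2021optimal} to the Neumann setting on a compact surface with boundary. The four numerical hypotheses in the proposition are exactly the quantitative thresholds required by that argument; the modifications needed here are mostly bookkeeping, together with a reflection trick to handle the boundary.

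I would first let $\phi$ be a Neumann eigenfunction associated with $\sigma_1=\sigma_1(X_{g,n})$, normalized by $\int \phi\,d\mu=0$ and $\|\phi\|_{L^2}=1$, so that $\|d\phi\|_{L^2}^2=\sigma_1$. Lifting $\phi$ to the universal cover $\mathbb{H}$, the identity $\Delta^j(d\phi)=(-\sigma_1)^j d\phi$ and the hypothesis $\sigma_1\le \tfrac14$ make the geometric series in the Sobolev estimate stated just before Proposition \ref{keypartwuxue} converge, yielding a pointwise gradient bound of the shape $|\nabla\phi|(p)\le 2c(\epsilon)\,\|d\phi\|_{L^2(B(p,\epsilon))}$ at every interior point $p$ whose $\epsilon$-ball embeds. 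For points near $\partial X_{g,n}$ the Neumann condition $\partial_{\vec n}\phi=0$ lets me double across $\partial X_{g,n}$ and apply the same estimate on the doubled surface.

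Next I would construct a separating multi-curve from the level sets of $\phi$. The coarea formula and Cauchy--Schwarz yield
\[
\int_{\mathbb{R}}\ell(\{\phi=s\})\,ds\ \le\ \|d\phi\|_{L^1}\ \le\ \sqrt{|\area(X_{g,n})|\cdot \sigma_1(X_{g,n})}.
\]
Applying Markov's inequality inside a window of width $\delta\asymp \sqrt{\sL_1(X_{g,n})/(1000\epsilon)}$ around a median value $t$ of $\phi$ produces a level $s_0$ along which $\{\phi=s_0\}$ is short; the hypothesis $\sigma_1\le \sL_1/(1000\epsilon|\area|^2)$ is what makes this window wide enough to contain such a level. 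The remaining hypothesis $c_1(\epsilon)\sqrt{\sL_1/(1000\epsilon)}\le 1/64$, combined with the pointwise gradient bound, ensures that $\phi$ oscillates by less than $1/64$ across this window, so the super- and sub-level sets of $\{\phi=s_0\}$ each retain area at least $|\area(X_{g,n})|/3$. I would then prune from $\{\phi=s_0\}$ every component that bounds a disk, or that together with arcs of $\partial X_{g,n}$ bounds a cylinder --- handled exactly as in the Case-3 analysis of the proof of Proposition \ref{keypartepsilon}, using Lemma \ref{collar lemma} and $2g+n\ge 4$ to rule out pair-of-pants degeneracies. The remaining multi-curve is a legitimate candidate for $\sL_1(X_{g,n})$, so its total length is at least $\sL_1(X_{g,n})$. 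Tracking constants through the estimates above produces the claimed lower bound with the numerical factor $1/2^{14}$.

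The main obstacle, and the real content inherited from \cite{wu2021optimal}, is that a straight Cheeger-type argument only yields $\sigma_1\succ \sL_1^2/|\area|^2$, losing a factor of $\sL_1$. Recovering this factor is precisely the role of the adaptive window choice above: the pointwise gradient estimate must be used to certify oscillation $\le 1/64$ over a window of width $\sqrt{\sL_1/(1000\epsilon)}$ (rather than the trivial window $\|\phi\|_\infty$), which is exactly why the hypothesis $c_1(\epsilon)\sqrt{\sL_1/(1000\epsilon)}\le 1/64$ is imposed. Without this oscillation control the pruning step could destroy the area lower bound, so the resulting curve would fail to be a legitimate candidate for $\sL_1(X_{g,n})$, and the sharpened estimate would collapse back to the weaker Cheeger bound.
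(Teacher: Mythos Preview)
Your level-set approach has a genuine gap: it cannot recover the \emph{linear} dependence on $\sL_1(X_{g,n})$ that the conclusion asserts. Tracing your outline, coarea plus Markov on a window of width $2\delta$ produces a level $s_0$ with $\ell(\{\phi=s_0\})\le \sqrt{|\area|\,\sigma_1}/(2\delta)$; after pruning you want this curve to separate, hence to have length $\ge \sL_1$. Combining gives only
\[
\sigma_1\ \ge\ \frac{4\delta^2\,\sL_1^{\,2}}{|\area|},
\]
and with your choice $\delta\asymp\sqrt{\sL_1/(1000\epsilon)}$ this is $\sigma_1\succ \sL_1^{\,3}/(\epsilon\,|\area|)$. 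Since $\sL_1$ may be arbitrarily small while $|\area|\ge 2\pi$, this is strictly weaker than the claimed $\sL_1/(2^{14}|\area|^2)$; no window width rescues it, because your only bridge from $\sigma_1$ to $\sL_1$ runs through the length of a single separating curve, and that bridge is inherently quadratic. Your last paragraph correctly names the lost factor, but the ``adaptive window'' you propose does not restore it.

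The paper's argument, taken essentially verbatim from \cite{wu2021optimal}, is structurally different and never looks at level sets of $\phi$. One takes the thick--thin decomposition $X_{g,n}=A\cup B$, where $B$ is the union of collars about the interior geodesics of length $<2\epsilon$ and $A=\cup_i M_i$ is the complement. The Sobolev estimate is used to bound the \emph{total oscillation on the thick pieces}, $\sum_i\mathrm{osc}_{M_i}(\phi)\le c_1(\epsilon)\sqrt{|\area|\,\sigma_1}\le \tfrac{1}{64\sqrt{|\area|}}$, while separately $\max_A\phi-\min_A\phi\ge \tfrac{1}{32\sqrt{|\area|}}$. The decisive ingredient is the collar energy estimate $\int_{T_{ij}}|\nabla\phi|^2\ge \tfrac{\ell_\gamma}{4}\,\delta_{ij}^2$ for the jump $\delta_{ij}=\dist(\phi(M_i),\phi(M_j))$ across each thin collar: this is \emph{linear} in $\ell_\gamma$. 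Cauchy--Schwarz then gives $\sigma_1\ge \tfrac{1}{4(3g-3+n)}\bigl(\sum\sqrt{\ell_\gamma}\,\delta_{ij}\bigr)^2$, and a combinatorial pigeonhole (\cite[Proposition~17]{wu2021optimal}) picks a subfamily of these short geodesics that separates $X_{g,n}$ and across which every $\delta_{ij}$ is at least $\max-\min-\sum\mathrm{osc}\ge \tfrac{1}{64\sqrt{|\area|}}$. Since $\bigl(\sum_{\text{cut}}\sqrt{\ell_\gamma}\bigr)^2\ge \sum_{\text{cut}}\ell_\gamma\ge \sL_1$, the bound follows. It is the collar energy estimate --- absent from your outline --- that produces the correct exponent.
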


\bp  We only outline the proof here. One may see \cite{wu2021optimal} for more details. Denote  $\gamma_1,\cdots,\gamma_m$ to be all simple closed geodesics in the interior of  $X_{g,n}$ that are shorter than $2\epsilon$. Set $$B=\cup_{i=1}^mN(\gamma_i,w(\gamma_i)-2)$$ which is the disjoint union of collars centered at $\gamma_i$ of width $w(\gamma_i)-2$. Set $$A=\overline{  X_{g,n}-B}.$$ Denote all components of $A$ by $M_1,\cdots,M_m$ for some $m>0$. Let $\phi$ be a normalized  eigenfunction corresponding to $\sigma_1(X_{g,n})\leq \frac{1}{4}$ and
set $$
\mathrm{osc}(i)=\max_{p\in M_i}\phi(p)-\min_{p\in M_i}\phi(p)
$$
to be the oscillation of $\phi$ on $M_i$. Then the proof of \cite[Proposition 12]{wu2021optimal} gives that
\be\label{key1}
\sum_{i=1}^n \mathrm{osc}(i)\leq c_1(\epsilon)\sqrt{\area(X_{g,n})\sigma_1(X_{g,n})}\leq \frac{1}{64\sqrt{|\area(X_{g,n})|}}.
\ene
Since $\sigma_1(X_{g,n})\leq \frac{1}{1000|\area(X_{g,n})|^2}\leq \frac{1}{1000|\area(X_{g,n})|},$ the proof of \cite[Proposition 14]{wu2021optimal} yields that
$$
\sup_{p\in A}|\phi(p)|\geq \frac{1}{32\sqrt{|\area(X_{g,n})|}}.
$$
Along with \cite[Lemma 13]{wu2021optimal}, we have  \be\label{key2}
\sup_{p\in A}\phi(p)-\inf_{p\in A}\phi(p)\geq \frac{1}{32\sqrt{|\area(X_{g,n})|}}.
\ene
Combining (\ref{key1}) and (\ref{key2}), we have \be\label{key3}
\max_{p\in A}\phi(p)-\min_{p\in A}\phi(p)-\sum_{i=1}^n\mathrm{osc}(i)\geq \frac{1}{64\sqrt{|\area(X_{g,n})|}}.
\ene
As in \cite{wu2021optimal}, for each $i$ we assume that the image $\phi(M_i)=[a_i,b_i]$. Then $b_i-a_i=\mathrm{osc}(i)$. Denote $T_{ij}^1,\cdots,T_{ij}^{\theta_{ij}}$ to be all collar components of $B$ with center geodesics $\gamma_{ij}^\theta$ shorter than $2\epsilon$ and two boundaries $\Gamma_{ij}^{\theta,1},\Gamma_{ij}^{\theta,2}$  lying on $M_i$ and $M_j$. Take $$\delta_{ij}=\dist([a_i,b_i],[a_j,b_j]),$$ then it follows by \cite[Lemma 8]{wu2021optimal} and the Cauchy-Schwarz inequality that \be\label{key4}
\begin{aligned}
\sigma_1(X_{g,n})&=\int_{X_{g,n}}|\nabla \phi|^2 \geq \sum_{T_{ij}^\theta}\int_{T_{ij}^\theta}|\nabla \phi|^2\geq \sum_{T_{ij}^\theta}\frac{\ell_{\gamma_{ij}^\theta}(X_{g,n})}{4}\delta_{ij}^2\\
& \geq \frac{1}{4(3g-3+n)}\left(\sum_{T_{ij}^\theta} \sqrt{\ell_{\gamma_{ij}^\theta}(X_{g,n})}\delta_{ij}\right)^2.\\
\end{aligned}\ene
Rewrite $$\cup_{i=1}^m [a_i,b_i]=\cup_{k=1}^N [e_k,f_k]$$ with pairwise disjoint intervals $\{[e_k,f_k]\}_{k=1}^N$. For each $i\in [1,m]$ there exists $k_i\in[1,N]$ such that $$
[a_i,b_i]\subset [e_{k_i},f_{k_i}].
$$
Then the proof of \cite[Proposition 17]{wu2021optimal} gives that there is an integer $K\in [1,m-1]$ such that \be\label{key5}
\begin{aligned}
&\sum_{T_{ij}^\theta} \sqrt{\ell_{\gamma_{ij}^\theta}(X_{g,n})}\delta_{ij}\\
\geq& \left(\sum_{1\leq k_i\leq K<k_j\leq N} \sqrt{\ell_{\gamma_{ij}^\theta}(X_{g,n})}\right)\left(\max_{p\in A}\phi(p)-\min_{p\in A}\phi(p)-\sum_{i=1}^nosc(i)\right).
\end{aligned}
\ene
Same as \cite{wu2021optimal}, the union of the central closed geodesics of all $T_{ij}^\theta$ with $k_i\leq K<k_j $ will cut $X_{g,n}$ into at least two components. Therefore we have 
\be\label{key6}
\sum_{1\leq k_i\leq K<k_j\leq N} \sqrt{\ell_{\gamma_{ij}^\theta}(X_{g,n})}\geq \sqrt{\sum_{1\leq k_i\leq K<k_j\leq N} \ell_{\gamma_{ij}^\theta}(X_{g,n})}\geq \sqrt{\sL_1(X_{g,n})}.
\ene
Then the conclusion follows from (\ref{key3}), (\ref{key4}), (\ref{key5}) and (\ref{key6}). This completes the proof.
\ep

Now we are ready to prove Theorem \ref{mt-2}.
\bp[Proof of Theorem \ref{mt-2}]
If  $\sigma_1(X_{g,n})\geq \frac{1}{4}$,  by Proposition \ref{xiamianyaoyong}, we have \begin{equation*}
    \frac{\sigma_1(X_{g,n})}{\sL_1(X_{g,n})}\geq \frac{1}{4s(L)\ln |\area(X_{g,n})|}\geq \frac{1}{4s(L)|\area(X_{g,n})|^2}
    .
\end{equation*}

If
 $\sigma_1(X_{g,n})\leq \min\left\{ \frac{1}{4}, \frac{1}{1000\epsilon}\frac{\sL_1(X_{g,n})}{|\area(X_{g,n})|^2} \right\}$,  $\sL_1(X_{g,n})\leq \epsilon$ and $c_1(\epsilon)\sqrt{\frac{\sL_1(X_{g,n})}{1000\epsilon}}\leq \frac{1}{64}$ where $c_1(\epsilon)=\frac{14\epsilon c(\epsilon)}{\sqrt{\area(B(\epsilon))}}$, by Proposition \ref{keypartwuxue}, we have $$
\frac{\sigma_1(X_{g,n})}{\sL_1(X_{g,n})}\geq \frac{1}{2^{14}|\area(X_{g,n})|^2}.
 $$

If $\sL_1(X_{g,n})\geq \min\left\{\epsilon, \epsilon^\prime \right\}$ where $\epsilon^\prime=1000\epsilon(\frac{1}{64c_1(\epsilon)})^2$, by Proposition  \ref{keypartepsilon}, we have $$ 
\frac{\sigma_1(X_{g,n})}{\sL_1(X_{g,n})}\geq \frac{\min\left\{\hat{c}_\epsilon(L), \hat{c}_{\epsilon^\prime}(L)\right\}}{|\area(X_{g,n})|^2}.
$$

Recall that by Gauss-Bonnet $\area(X_{g,n})=2\pi(2g+n-2)$. Then the conclusion follows by setting
$$C(L)=\min \left\{\frac{1}{4s(L)}, \frac{1}{2^{14}},\hat{c}_\epsilon(L),\hat{c}_{\epsilon^\prime}(L),\frac{1}{1000\epsilon}\right\}.$$

The proof is complete.
\ep

We enclose this section with the following example showing that the assumption that each boundary component has its length no more than $L$ for some constant $L>0$ in Theorem \ref{mt-2} cannot be dropped. More precisely,
\begin{example}\label{Examp}
Let $X_g$ be a closed hyperbolic surface of genus $g$ as described in \cite{buser1994period} such that the systole $\sys(X_g)\geq U \ln g$ for some uniform constant $U>0$. Remove a point from $X_g$ and consider the unique complete hyperbolic metric corresponding to its complex structure. Then we get a hyperbolic surface $X_{g,1}$ of genus $g$ with $1$ puncture. Schwarz's Lemma implies that the systole 
\[\sys(X_{g,1})\geq \sys(X_g)\geq U\ln g.\] 
According to  Theorem \ref{parlierlength}, there is  a compact hyperbolic surface $\tilde{X}_{g,1}(l)$ of boundary length $$l= 2\ln\ln g$$ satisfying  $$\sys(\tilde{X}_{g,1}(l))\geq \sys(X_{g,1})\geq U \ln g.$$
Let $P$ be a pair of pants with boundary lengths $\left\{4\sinh^{-1}\left(\frac{\cosh(\ln\ln g)}{\sinh{\frac{2}{g^2}}}\right), l ,l\right\}$, and denote the three boundary closed geodesics by $\alpha,\beta,\gamma$ in order. For large $g$, we have $4\sinh^{-1}\left(\frac{\cosh(\ln\ln g)}{\sinh{\frac{2}{g^2}}}\right)\sim 8\ln g$. Glue two copies of $\tilde{X}_{g,1}(l)$'s onto $P$ along $\beta$ and $\gamma$ with the same twist parameters. Then we will get a hyperbolic surface $X_{2g,1}$. 
\begin{figure}[h]
    \centering
    \includegraphics[width=4 in]{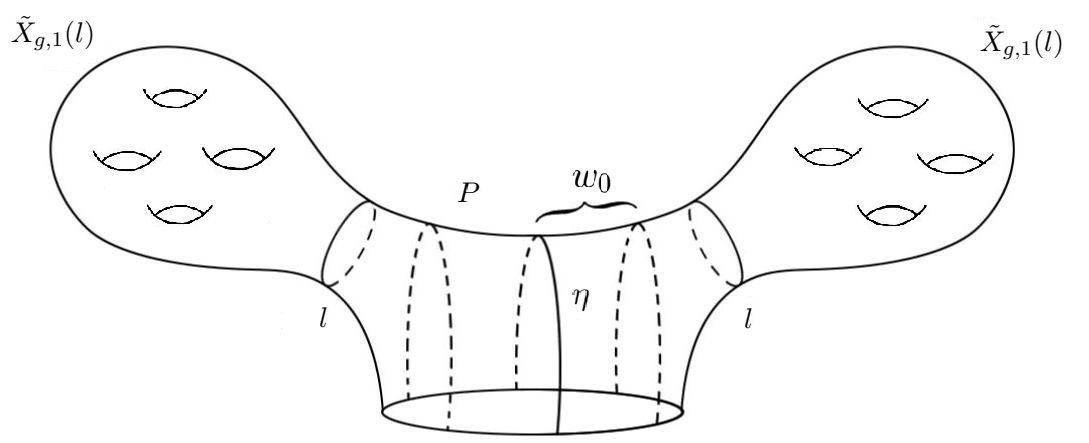}
    \caption{An example with $C(L)\to 0$ as $L\to \infty$}
    \label{fig:example34}
\end{figure}
Take $\eta$ to be the shortest geodesic arc with two ends on $\alpha$, and then cut $P$ along the three perpendiculars we get four congruent right-angled pentagons. A direct computation (see \eg \cite[Formula $2.3.4$ on Page $454$]{buser2010geometry}) shows that 
\[\cosh \left(\frac{l}{2}\right)=\sinh\left(\frac{\ell_\eta(X_{2g,1})}{2}\right)\cdot \sinh \left(\sinh^{-1}\left(\frac{\cosh(\ln\ln g)}{\sinh{\frac{2}{g^2}}}\right) \right)\]
implying that
$$\ell_\eta(X_{2g,1})=\frac{4}{g^2}.$$ 
Then it follows from the Collar Lemma that $\eta$ has a ``half" collar $\mathrm{halC}$ of width $$w_0=\sinh^{-1}\frac{1}{\sinh \ell_\eta(X_{2g,1})}\sim 2\ln g,$$ which is isomorphic to $[0,\frac{1}{2}]\times [-w_0,w_0]$ endowed with the hyperbolic metric $$(2\ell_\eta(X_{2g,1}))^2\cosh^2\rho dt^2+d\rho^2.$$ 
Recall that $\mathcal{L}_1(X_{2g,1})$ is the minimal possible sum of the lengths of simple closed multi-geodesics in $X_{2g,1}$ that cut $X_{2g,1}$ into $2$ components. Now we claim that for large $g>0$, $$\sL_1(X_{2g,1})= l= 2\ln\ln g.$$ (See Figure \ref{fig:example34} for an illustration). Indeed, both $\beta$ and $\gamma$ separate $X_{2g,1}$. Thus, we have $\mathcal{L}_1(X_{2g,1})\leq l=2\ln\ln g.$
Consider a separating simple closed  multi-geodesic
 $\Gamma\subset X_{2g,1}$. If $\Gamma$ transversely intersects with either $\beta$ or $\gamma$, then $\Gamma$ crosses the ``half" collar $\mathrm{halC}$. So we have $\ell_\Gamma(X_{2g,1})\geq 2w_0>l$ for large $g>0$. Otherwise, $\Gamma$ is contained in a single $\tilde{X}_{g,1}(l)$ which in particular implies that $\ell_\Gamma(X_{2g,1})\geq \sys(\tilde{X}_{g,1}(l))>l$
 for large $g>0$.  In summary, we conclude that $\mathcal{L}_1(X_{2g,1})=l=2\ln\ln g$ for large $g$, which is realized by either $\beta$ or $\gamma$.

Set $$f=\frac{\tanh \rho}{\tanh w_0}$$ in this ``half" collar $\mathrm{halC}$, and extend it continuously to  constants  $1$ and $-1$ outside $\mathrm{halC}$. It is clear that 
$$\int_{X_{2g,1}}f=0.$$
Take it as a test function for the Rayleigh quotient, and then a direct computation shows that $|\nabla f|=\frac{1}{\tanh w_0}\frac{1}{\cosh^2\rho}\frac{\partial}{\partial \rho}$ in $\mathrm{halC}$, and $|f|=1$ outside $\mathrm{halC}$. Hence,  
we have
\bear
\int_{X_{2g,1}}|\nabla f|^2&=&\int_{0}^{\frac{1}{2}} \int_{-w_0}^{w_0}\frac{1}{(\tanh w_0)^2}\frac{1}{\cosh^4\rho} 2\ell_\eta(X_{2g,1}) \cosh \rho d\rho dt \nonumber\\
&\leq& \frac{\ell_\eta(X_{2g,1})}{\tanh^2 w_0}\cdot 2\int_{0}^\infty \frac{1}{\cosh^3 \rho}d\rho \prec \ell_\eta(X_{2g,1})=\frac{4}{g^2} \nonumber
\eear
and
$$
\int_{X_{2g,1}}f^2\geq 2 \area(\tilde{X}_{g,1}(\ell))\asymp g.
$$
It follows that 
\bear
\sigma_1(X_{2g,1})\leq \frac{\int_{X_{2g,1}}|\nabla f|^2}{\int_{X_{2g,1}}f^2}\prec \frac{1}{g^3 }. \nonumber
\eear
Therefore, we have $$\frac{g^2\sigma_1(X_{2g,1})}{\sL_1(X_{2g,1})}\prec \frac{1}{g\ln\ln g}\to 0$$ as $g\to\infty$. This shows that the constant $C(L)>0$ in Theorem \ref{mt-2} can not be chosen to be uniform.
\end{example}

\section{Optimal lower bounds for second eigenvalues}\label{lowboundestimation}

In this section we complete the proof of the optimal lower bound in Theorem \ref{mt-1}. More precisely,
\bt[Optimal lower bound] \label{mt-1-lb}
For every $g\geq 3$ and any closed hyperbolic surface $X_g$ of genus $g$,  
\[\lambda_2(X_g) \succ \frac{\mathcal{L}_2(X_g)}{g^2}.\]
Moreover,  for all $g\geq 3$ there exist a closed hyperbolic surfaces $\mathcal{X}_g$ of genus $g$ such that $$\lambda_2(\mathcal{X}_g) \asymp \frac{\mathcal{L}_2(\mathcal{X}_g)}{g^2}.$$
\et

We split the proof into two cases. Firstly based on Theorem \ref{mt-2},  we prove Theorem \ref{mt-1-lb} when $\sL_1(X_g)$ is uniformly bounded from above. That is, 
\begin{proposition}\label{l2sigma2l}
If $X_g$ is a closed hyperbolic surface of genus $g$ with $\sL_1(X_g)<L$ for any fixed constant $L>0$, then there is a constant $E(L)>0$ only depending  on $L$ such that for $g$ large enough, $$
\frac{\lambda_2(X_g)}{\sL_2(X_g)}\geq \frac{E(L)}{g^2}.
$$
\end{proposition}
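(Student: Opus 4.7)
The plan is to invoke the max-minimal principle (Theorem~\ref{maxminpr}) to bound $\lambda_2(X_g)$ from below by $\min_j \sigma_1(N_j)$ for a suitably constructed 3-partition $X_g = N_1 \cup N_2 \cup N_3$. Since $\sL_1(X_g)<L$, I would fix a multi-geodesic $\gamma_1$ realizing $\sL_1(X_g)$ and cut $X_g$ along $\gamma_1$ into two subsurfaces $M_1, M_2$; each boundary component of $M_i$ is a component of $\gamma_1$, hence has length at most $L$. Without loss of generality assume $\sL_1(M_1) \leq \sL_1(M_2)$ and let $\gamma_2 \subset M_1$ realize $\sL_1(M_1)$, cutting $M_1$ into $P_1, P_2$. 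Then $\gamma_1 \cup \gamma_2$ yields a 3-partition $N_1 = M_2$, $N_2 = P_1$, $N_3 = P_2$, and by definition
\begin{equation*}
\sL_2(X_g) \leq \ell(\gamma_1) + \ell(\gamma_2) = \sL_1(X_g) + \sL_1(M_1) \leq L + \sL_1(M_1).
\end{equation*}

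For the piece $M_2$, whose boundary components are all of length $\leq L$, Theorem~\ref{mt-2} (or Lemma~\ref{eigens03s11} in the degenerate cases $S_{0,3}, S_{1,1}$) gives
\begin{equation*}
\sigma_1(M_2) \geq \frac{C(L)}{g^2} \sL_1(M_2) \geq \frac{C(L)}{g^2} \sL_1(M_1).
\end{equation*}
I then split into two cases based on $\sL_1(M_1)$. If $\sL_1(M_1) \leq L$, then all boundary components of $P_1, P_2$ have length $\leq L$, so Theorem~\ref{mt-2} gives $\sigma_1(P_j) \geq \frac{C(L)}{g^2}\sL_1(P_j)$ uniformly. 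If instead $\sL_1(M_1) > L$, then $\sL_2(X_g) \leq 2\sL_1(M_1)$, and Theorem~\ref{mt-2} applied to each $M_i$ directly yields $\sigma_1(M_i) \geq \frac{C(L)}{2g^2}\sL_2(X_g)$ for $i=1,2$.

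The main obstacle is converting these ingredient bounds into $\sigma_1(N_j) \geq E(L)g^{-2}\sL_2(X_g)$ for \emph{every} piece. In the first case one must rule out the scenario where some $\sL_1(P_j)$ is much smaller than $\sL_2(X_g)$: if this happened, a short separator of $P_j$ could be swapped in to replace a component of $\gamma_2$ and yield a strictly better 3-partition of $X_g$; iterating this refinement eventually produces a 3-partition in which every piece has $\sL_1$ at least a constant multiple of $\sL_2(X_g)$. In the second case, the individual components of $\gamma_2$ may exceed $L$, so Theorem~\ref{mt-2} cannot be applied to $P_j$ with a uniform constant. Here one would instead replace $\gamma_2$ by a separating multi-geodesic in $M_1$ whose individual components are controlled by a function of $L$ (using, for instance, a refined collar-based construction analogous to the one in the proof of Proposition~\ref{xiamianyaoyong}), or else exploit a spectral perturbation across the thin neck $\gamma_1$ to bound $\lambda_2(X_g)$ directly in terms of $\min(\sigma_1(M_1), \sigma_1(M_2))$. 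Carrying out this second-case analysis while keeping all constants dependent on $L$ alone is the heart of the argument.
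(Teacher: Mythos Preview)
Your plan misapplies Theorem~\ref{maxminpr}. As stated there, a partition into $k$ pieces gives $\lambda_k(X)\geq\min_i\sigma_1(N_i)$; thus a 3-partition yields a lower bound on $\lambda_3(X_g)$, not on $\lambda_2(X_g)$. Since $\lambda_2\leq\lambda_3$, this is the wrong direction and the opening move of your argument does not get off the ground. The paper's proof instead uses the \emph{2-partition} $X_g=M_1\cup M_2$ along a multi-geodesic $\gamma$ realizing $\sL_1(X_g)$, so that Theorem~\ref{maxminpr} with $k=2$ gives $\lambda_2(X_g)\geq\min\{\sigma_1(M_1),\sigma_1(M_2)\}$. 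Your own ``second case'' observation that $\sigma_1(M_i)\geq\frac{C(L)}{2g^2}\sL_2(X_g)$ is exactly what is needed here, but you never connect it to a 2-partition argument and instead continue to worry about the pieces $P_j$, which is unnecessary.

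Once one uses the 2-partition, the entire case analysis on the size of $\sL_1(M_1)$ and the difficulties you flag about the components of $\gamma_2$ exceeding $L$ disappear. The missing ingredient is a clean comparison showing $\sL_1(M_i)\geq\tfrac{1}{3}\sL_2(X_g)$ for each $i$ with $|\chi(M_i)|>1$: if $\eta\subset M_i$ realizes $\sL_1(M_i)$ and cuts $M_i$ into $A\cup B$, then $\partial A$ and $\partial B$ each separate $X_g$, so $\ell(\partial A),\ell(\partial B)\geq\sL_1(X_g)$; summing gives $2\sL_1(M_i)+\sL_1(X_g)\geq 2\sL_1(X_g)$, i.e.\ $\sL_1(M_i)\geq\tfrac{1}{2}\sL_1(X_g)$. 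Combined with $\sL_1(M_i)+\sL_1(X_g)\geq\sL_2(X_g)$ (since $\gamma\cup\eta$ cuts $X_g$ into three pieces), this yields $\sL_1(M_i)\geq\tfrac{1}{3}\sL_2(X_g)$. Theorem~\ref{mt-2} then gives $\sigma_1(M_i)\geq\frac{C(L)}{3}\cdot\frac{\sL_2(X_g)}{|\area(X_g)|^2}$ directly, with no further refinement or iteration needed. The case $|\chi(M_i)|=1$ is handled separately by Lemma~\ref{eigens03s11} and Proposition~\ref{boundlk}, as you anticipated.
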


\bp
Let $\gamma=\cup_{i=1}^k\gamma_i$ be a  separating  simple closed multi-geodesic on $X_g$  of length $\ell_\gamma(X_{g})=\sL_1(X_g)$ and $X_{g}-\gamma=M_1\cup M_2$. By Theorem \ref{maxminpr}, it suffices to show that \be\label{thm1401}
\min\left\{\sigma_1(M_1), \sigma_1(M_2)\right\}\geq \frac{E(L)}{g^2}\sL_2(X_g).\ene
 
\underline{Case-1:} \textit{$|\chi(M_i)|=1$, i.e., $M_i$ is of the type $S_{1,1}$ or $S_{0,3}$.}
 By Lemma \ref{eigens03s11} and  Proposition \ref{boundlk}, we have
 \begin{equation}\label{131}
     \frac{\sigma_1(M_i)}{\sL_2(X_g)}\geq \frac{K(L)}{c_2\ln \left(4\pi(g-1)\right)}\geq \frac{K(L)}{c_2g^2}.
 \end{equation}
 
\underline{Case-2:} \textit{$|\chi(M_i)|>1$.} Since  $\ell_{\partial M_i}(M_i)=\sL_1(X_g)<L$, 
 by Theorem \ref{mt-2}, we have   \be\label{thm1402}
 \sigma_1(M_i)\geq \frac{C(L)}{|\area(M_i)|^2}\sL_1(M_i).\ene
 Consider a separating simple closed multi-geodesic $\eta \subset M_i$ which divides $M_i$ into two components   $A\cup B$ with $\ell_\eta(M_i)=\sL_1(M_i)$ (see Figure \ref{fig:theorem36} for an illustration).
 \begin{figure}[h]
    \centering
    \includegraphics[width=2.5 in]{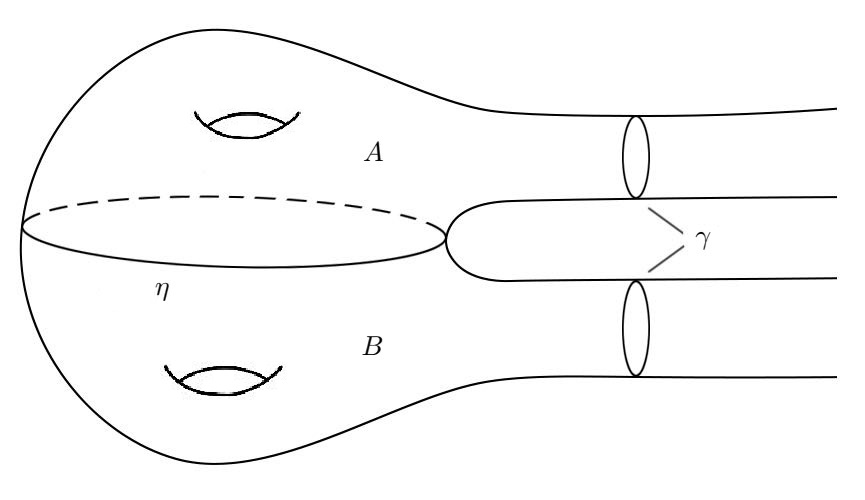}
   \caption{ Comparison of $\sL_1(M_i)$ and $\sL_2(X_g)$  }
    \label{fig:theorem36}
\end{figure}
Since $\gamma\cup\eta$ separates  $X_g$ into $3$ components $\{A,B,X_g-M_i\}$, we have $\sL_1(M_i)+\sL_1(X_g)\geq \sL_2(X_g)$. It is clear that  $\ell_{\partial A}(X_{g})\geq \sL_1(X_g)$ and $\ell_{\partial B}(X_g)\geq \sL_1(X_g)$. Since $\ell_{\partial A}(X_g)+\ell_{\partial B}(X_g)=2\sL_1(M_i)+\sL_1(X_g)$, we have  $$\sL_1(M_i)\geq \frac{1}{2}\sL_1(X_g),$$ and therefore  $$\sL_1(M_i)\geq \frac{1}{3}\sL_2(X_g).$$ Then it follows from (\ref{thm1402}) that  \begin{equation}\label{132}
    \frac{\sigma_1(M_i)}{\sL_2(X_g)}\geq \frac{C(L)}{3|\area(M_i)|^2}\geq \frac{C(L)}{3|\area(X_g)|^2}.
\end{equation}

Combine (\ref{131}) and (\ref{132}), then we finish the proof of (\ref{thm1401})
by choosing 
$E(L)=\min\left\{\frac{K(L)}{c_2},\frac{C(L)}{3\times (4\pi)^2}\right\}$.
\ep

Now we prove Theorem \ref{mt-1-lb} when $\sL_1(X_g)$ is uniformly bounded away from $0$. That is, 
\begin{proposition}\label{l2sigma2l-2}
For any fixed constant $L>0$, if $X_g$ is a closed hyperbolic surface of genus $g$ with $\sL_1(X_g)\geq L$, then there is a constant $E^\prime(L)>0$ only depending  on $L$ such that for $g$ large enough, $$
\frac{\lambda_2(X_g)}{\sL_2(X_g)}\geq \frac{E^\prime(L)}{g^2}.
$$
\end{proposition}

\bp
Similar to the proof of the previous proposition, we take a separating simple closed multi-geodesic $\gamma\subset X_g$ that cuts $X_g$ into $M_1\cup M_2$ with $\sL_1(X_g)=\ell_\gamma(X_g)$. By Theorem \ref{maxminpr}, it suffices to show that \be\label{thm1401-1}
\min\left\{\sigma_1(M_1), \sigma_1(M_2)\right\}\geq \frac{E^\prime(L)}{g^2}\sL_2(X_g).\ene

Now we prove it case by case.

\underline{Case-1:}  \textit{$|\chi(M_i)|=1$, i.e., $M_i$ is of the type $S_{1,1}$ or $S_{0,3}$.} In this case, if  $\ell_{\partial M_i}(X_g)=\sL_1(X_g)\geq \ln g$, by Proposition
\ref{boundlk} and Theorem \ref{mt-WX-1}, we have
\begin{equation}\label{geqlng}
    \lambda_2(X_g)\geq \lambda_1(X_g)\succ \frac{\sL_1(X_g)}{g^2}\geq\frac{\ln g}{g^2}\succ \frac{\sL_2(X_g)}{g^2}.
\end{equation}
While if $\ell_{\partial M_i}(X_g)\leq \ln g$, by Lemma \ref{s03s11lngsigma}, we have $$\sigma_1(M_i)\succ \frac{1}{g}.$$ Then combined with Proposition
\ref{boundlk}, it follows that for large enough $g$
 \begin{equation}\label{leqlng}
    \sigma_1(M_i)\succ \frac{\sL_2(X_g)}{g^2}.
\end{equation}

\underline{Case-2:}  \textit{$|\chi(M_i)|>1$ and the width $w(\eta)$ of the maximal half collar of some boundary component $\eta$ in $M_i$ is shorter than $\sL_1(X_g)$.} In this case, we use the same argument as in the proof of Proposition \ref{boundlk}. If the closure of the maximal half collar intersects with another boundary component $\eta^\prime$, then there will be a simple closed geodesic $\delta$ of length shorter than $\ell_\eta(X_g)+\ell_{\eta^\prime}(X_g)+2w(\eta)$, which bounds a pair of pants along with $\{\eta,\eta^\prime\}$ in $M_i$. By our assumption that $w(\eta)<\sL_1(g)$, we have $$\sL_2(X_g)\leq \ell_\delta(X_g)+\sL_1(X_g)\leq 4 \sL_1(X_g).$$ Then it follows from Theorem \ref{mt-WX-1} that
\begin{equation}\label{geq1short1}
    \lambda_2(X_g)\geq \lambda_1(X_g)\succ \frac{\sL_1(X_g)}{g^2} \geq \frac{\sL_2(X_g)}{4g^2}.
\end{equation}
If the closure of  the  maximal half collar  does not meet another boundary component, there  will be two simple closed geodesics $\delta_1$ and $\delta_2$ of lengths shorter than $\ell_\eta(X_g)+2w(\eta)$, which will bound a pair of pants along with $\eta$ in $M_i$. Again by our assumption that $w(\eta)<\sL_1(g)$, in this case we have $$\sL_2(X_g)\leq \ell_{\delta_1}(X_g)+\ell_{\delta_2}(X_g)+\sL_1(X_g)\leq 7\sL_1(X_g).$$ Also by Theorem \ref{mt-WX-1} we have  
 \begin{equation}\label{geq1short2}
    \lambda_2(X_g)\geq \lambda_1(X_g)\succ \frac{\sL_1(X_g)}{g^2} \geq \frac{\sL_2(X_g)}{7g^2}.
\end{equation}

\underline{Case-3:}  \textit{$|\chi(M_i)|>1$ and the width of the maximal half collar of  each boundary component in $M_i$ is greater than or equal to $\sL_1(X_g)$.} In this case  we use Cheeger's inequality to estimate $\sigma_1(M_i)$. Take a set of piecewise smooth curves $\Gamma$ that divides $M_i$ into two components $A$ and $B$ with $$\frac{\ell(\Gamma)}{\min\{\area(A),\area(B)\}}\leq 2h(M_i).$$ If $\Gamma$ only contains  curves that bound disks or cylinders, then by Lemma \ref{isoperimetric} we have $\frac{\ell(\Gamma)}{\min\{\area(A),\area(B)\}}\geq \frac{1}{2}$. Then according to Theorem \ref{cheineq} it follows that  $\sigma_1(M_i)\geq \frac{1}{64}.$ Therefore by Proposition \ref{boundlk}, we have
\begin{equation}\label{geqepsilon1}
    \sigma_1(M_i)\geq \frac{\sL_2(X_g)}{64 c_2\ln \left(4\pi(g-1)\right)}\geq \frac{\sL_2(X_g)}{64c_2g^2}.
\end{equation}
If $\Gamma$  contains a curve with two endpoints on $\partial M_i$ which is not homotopic to an arc on one boundary component of $M_i$, then $$\ell(\Gamma)\geq \min\{w(\eta),\eta\subset \partial M_i\}$$
where $w(\eta)$ represents the width of the maximal half collar of $\eta$ in $M_i$. Assume the minimum is taken at $\eta_0$ and $w_0=w(\eta_0)$. By our assumption we know that $w_0\geq \sL_1(X_g)\geq L$. Then it follows from Cheeger's inequality \ie, Theorem \ref{cheineq} that
\begin{equation}\label{geqepsilon2}
    \sigma_1(M_i)\geq\frac{1}{16}\frac{\ell^2(\Gamma)}{\min\{\area(A),\area(B)\}^2}\geq \frac{L}{16^2\pi^2}\frac{w_0}{g^2}.
\end{equation}
By the same  argument as we mention the maximal half  collar in \underline{Case-2}, we have $\sL_1(M_i)\leq 4w(\eta_0)+2\ell_{\eta_0}(X_g)$. So by our assumption that $w_0\geq \sL_1(X_g)=\ell_{\partial M_i}(X_g)$, we have  \begin{equation}\label{geqepsilon3}
    \sL_2(X_g)\leq \sL_1(M_i)+\ell_{\partial M_i}(X_g)\leq 7w_0.
\end{equation}
Then it follows from (\ref{geqepsilon2}) and (\ref{geqepsilon3}) that 
\begin{equation}\label{geqepsilon4}
\sigma_1(M_i)\geq \frac{L}{7\cdot16^2\pi^2}\frac{\sL_2(X_g)}{g^2}.
\end{equation}
For the remaining case, \ie, $\Gamma$ contains no curve bounding a disk or cylinder and no curve with two endpoints on $\partial M_i$ which is not homotopic to an arc on certain component, since $\Gamma$ separates $M_i$, in this case we have $\ell(\Gamma)\geq \sL_1(M_i)$. So
by Cheeger's inequality \ie, Theorem \ref{cheineq}, we have \begin{equation}\label{geqepsilon5}
\sigma_1(M_i)\geq \frac{\ell^2(\Gamma)}{16\pi^2g^2}\geq \frac{\sL_1(M_i)\cdot \sL_1(M_i)}{16\pi^2g^2} \geq \frac{L}{96\pi^2}\frac{\sL_2(X_g)}{g^2}.
\end{equation}
Here in the last inequality we apply $\sL_1(M_i)\geq \frac{1}{2}\sL_1(X_g)\geq\frac{L}{2}$ and $\sL_1(M_i)\geq \frac{1}{3}\sL_2(X_g)$ which have been shown in the proof of Theorem \ref{l2sigma2l}.

Then, the conclusion follows from \eqref{thm1401-1}, \eqref{geqlng}, \eqref{leqlng}, \eqref{geq1short1}, \eqref{geq1short2}, \eqref{geqepsilon1},
\eqref{geqepsilon4}, and \eqref{geqepsilon5}, by a suitable choice of $E^\prime(L)$ only depending on $L$.
\ep

Now we are ready to prove Theorem \ref{mt-1-lb}.

\bp[Proof of Theorem \ref{mt-1-lb}]
Take $L=1$. Then it follows from Proposition \ref{l2sigma2l} and Proposition \ref{l2sigma2l-2} that
\[\lambda_2(X_g)\geq \min\{E(1), E^\prime(1)\}\cdot \frac{\mathcal{L}_2(X_g)}{g^2}.\]

The existence of $\mathcal X_g$ with $\frac{\lambda_2(\mathcal X_g)}{\sL_2(\mathcal X_g)}\asymp \frac{1}{g^2}$ was constructed in \cite{WX18}. We only briefly introduce it as follows. One may see \cite{WX18} for more details. Let $\mathcal{P}_\ell$ be the pair of pants $\mathcal{P}_\ell$ whose boundary curves all have length equal to $\ell$ where $\ell<\arcsinh 1$. Then we glue $(2g-2)$ copies of $\mathcal P_\ell$'s from left to right (see \cite[Proposition 3]{WX18}) to get a closed hyperbolic surface $\mathcal X_g$ of genus $g$. For any closed geodesic $\gamma \subset \mathcal X_g$, the curve $\gamma$ is either one of the boundary closed geodesic of one $\mathcal{P}_\ell$ or must intersect at least one of the boundary closed geodesic of one $\mathcal{P}_\ell$. Then the Collar Lemma implies that $\ell(\gamma)> 2\arcsinh 1$ if $\gamma$ intersects with one of the boundary closed geodesic of one $\mathcal{P}_\ell$. This in particular yields that 
\begin{equation}
\mathcal \sL_1(\mathcal X_g)=\ell \ \textit{and} \ \sL_2(X_g)= 2\ell. \nonumber
\end{equation}

\noindent It was proved in \cite[Proposition 11]{WX18}  that
\begin{equation}
\lambda_2(\mathcal X_g) \leq \frac{\beta(\ell)}{g^2} 
\end{equation}
\noindent where for the case that $\ell< 2\arcsinh 1$ and $\mathcal X_g$ is constructed as above, $$\beta(\ell) \asymp \ell.$$ So we have
\begin{equation}
\lambda_2(\mathcal X_g) \prec \frac{\mathcal L_2(\mathcal X_g)}{g^2} 
\end{equation}
which together with the lower bound implies that $$\frac{\lambda_2(\mathcal X_g)}{\sL_2(\mathcal X_g)}\asymp \frac{1}{g^2}.$$

The proof is complete.
\ep

\begin{rem*}
For a general index $k\geq 2$ independent of $g$, we only need to replace a multi-curve $\gamma$ realizing $\sL_1(X_g)$  by a multi-curve realizing $\sL_{k-1}(X_g)$ in the argument above to obtain that
\be
\lambda_k(X_g)\succ \frac{\mathcal{L}_k(X_g)}{g^2}.\nonumber
\ene
And the surface $\mathcal X_g$ in \cite[Proposition 11]{WX18} also satisfies that 
\[\lambda_k(\mathcal X_g)\asymp \frac{\mathcal{L}_k(\mathcal X_g)}{g^2}.\] 
\end{rem*}

\section{Optimal upper bounds for second eigenvalues}\label{upperbound}
In this section we complete the proof of the optimal upper bound in Theorem \ref{mt-1}. More precisely,
\bt[Optimal upper bound] \label{mt-1-up}
For every $g\geq 3$ and any closed hyperbolic surface $X_g$ of genus $g$,  
\[\lambda_2(X_g) \prec \mathcal{L}_2(X_g).\]
Moreover,  for all $g\geq 3$ there exists a closed hyperbolic surfaces $\mathcal{Y}_g$ of genus $g$ such that $$\lambda_2(\mathcal{Y}_g) \asymp \mathcal{L}_2(\mathcal{Y}_g).$$
\et

It is known by Cheng \cite{cheng1975eigenvalue} that the eigenvalues of $X_g$ can be bounded from above by the diameter $\diam(X_g)$ of $X_g$. More precisely,
for all $k\geq 1$,
\be\label{Cheng}
   \lambda_k(X_g)\leq \frac{1}{4}+\frac{16\pi^2\cdot k^2}{\diam(X_g)^2}.
\ene

\noindent A standard area argument together with Gauss-Bonnet implies that 
\[\diam(X_g)\geq \ln (4g-3).\]
The two inequalities above yield that for all $k=o(\ln g)$,
\be\label{1/4-ub-ei}
\limsup \limits_{g\to \infty}\sup_{X_g \in \sM_g}\lambda_k(X_g)\leq \frac{1}{4}.
\ene
A recent breakthrough of Hide-Magee \cite{HM21} says that this upper bound $\frac{1}{4}$ is optimal. More precisely, they showed that for all $k=o(\ln g)$,
\[\lim \limits_{g\to \infty}\sup_{X_g \in \sM_g}\lambda_k(X_g)= \frac{1}{4}.\]
One may also see \cite{WZZ22} for optimal higher spectral gaps of closed hyperbolic surfaces of large genus. In this article we study its connection to the geometric quantity $\sL_2(X_g)$. First we refine the argument in \cite{SWY80} to show the upper bound in Theorem \ref{mt-1-up}.

\bp[Proof of Part $(1)$ of Theorem \ref{mt-1-up}]
We split the proof into two cases.

\underline{Case-1:} \textit{$\sL_2(X_g)>2\arcsinh 1.$} Recall that $\diam(X_g)\geq \ln (4g-3)\geq \ln 9$. Then it follows from \eqref{Cheng} that 
\begin{equation}\label{partA}
\lambda_2(X_g)\leq \frac{1}{2\arcsinh 1}
\left(\frac{1}{4}+\frac{64\pi^2}{(\ln{9})^2}\right)\cdot \sL_2(X_g)\asymp \sL_2(X_g).
\end{equation}

\underline{Case-2:} \textit{$\sL_2(X_g)\leq 2\arcsinh 1.$} Assume that $\sL_2(X_g)$ is realized by a simple closed multi-geodesic $\gamma=\sum_{i=1}^m\gamma_i$. We write $X_g-\cup_{i=1}^m\gamma_i=\cup_{j=1}^{3}M_j$ and $\partial M_j=\cup_{t=1}^{s_j}\gamma_{j_t}$. First by our assumption on $\sL_2(X_g)$ and the Collar Lemma (see Lemma \ref{collar lemma}), the half width $w(\gamma_i)$ of each collar $T(\gamma_i)$ satisfies $$w(\gamma_i)\geq \arcsinh1.$$
Now define test functions $\psi_j\in W^{1,2}(X_g)$ for each $1\leq j\leq 3$ through
$$
\begin{aligned}
\psi_j(p)= \left \{
\begin{array}{ll}
   1,                    &p\in M_j-\cup_{t=1}^{s_j}T(\gamma_{j_t});\\
   0,         & p\in X_g-M_j-\cup_{t=1}^{s_j}T(\gamma_{j_t});\\
    \frac{1}{2}+\frac{\tanh\rho }{2\tanh w(\gamma_{j_t})},                               & p\in T(\gamma_{j_t})\cong [-w(\gamma_{j_t}),w(\gamma_{j_t})]\times \R/\Z.
\end{array}
\right.
\end{aligned}
$$

\noindent A direct computation shows that for each $1\leq j \leq 3$, 
\[\int_{X_g}\psi_j^2\geq \int_{M_j-\cup_{t=1}^{s_j}T(\gamma_{j_t})} \psi_j^2= \area( M_j-\cup_{t=1}^{s_j}T(\gamma_{j_t}))\succ 1,\]
and on each collar $T(\gamma_i)$
\begin{equation*}
    \begin{aligned}
         \int_{T(\gamma_i)}|\nabla\psi_j|^2&=\frac{\ell_{\gamma_i}(X_g)}{4 (\tanh w(\gamma_i))^2}\int_{-w(\gamma_i)}^{w(\gamma_i)}\frac{1}{\cosh^3\rho}d\rho\\
         \leq &\frac{\ell_{\gamma_i}(X_g)}{2 (\tanh \arcsinh 1)^2}\int_{0}^\infty\frac{1}{\cosh^3\rho}d\rho\asymp \ell_{\gamma_i}(X_g).
        \end{aligned}
\end{equation*}

\noindent Clearly $\psi_1,\psi_2,\psi_{3}$ are linearly independent. So there is a nonzero linear combination $\psi$ of them such that $\left<\psi,\phi_k\right>=0$ for $k=0,1$ where $\phi_k$ is the $k$-th eigenfunction of $X_g$. Since the supports of $\psi_j$ are pairwise disjoint, we have 

\begin{equation}\label{partB}\lambda_2(X_g)\leq \frac{\int_{X_g}|\nabla\psi|^2}{\int_{X_g}\psi^2}\prec \sum \limits_{i=1}^m \ell_{\gamma_i}(X_g)=  \sL_2(X_g).\end{equation}

Then the conclusion follows from \eqref{partA} and \eqref{partB}.
\ep

\begin{rem*}
The proof above actually shows that
\begin{proposition}
Let $X_g$ be a closed hyperbolic surface of genus $g$, then for any $k\in [1,2g-3]$ with $k\prec \diam(X_g)$, we have
\[\lambda_k(X_g)\prec \sL_k(X_g).\]
In particular, for any $k\in [1,2g-3]$ with $k\prec \ln g$, we have
\[\lambda_k(X_g)\prec \sL_k(X_g).\]
\end{proposition}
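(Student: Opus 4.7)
The strategy is to upgrade the proof of Theorem \ref{mt-1-up} from $k = 2$ to arbitrary $k$ in the stated range, keeping exactly the same two-case split: either $\sL_k(X_g) > 2\arcsinh 1$, handled by Cheng's inequality; or $\sL_k(X_g) \leq 2\arcsinh 1$, handled by collar-based test functions.

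In the first regime, Cheng's inequality \eqref{Cheng} yields $\lambda_k(X_g) \leq \frac{1}{4} + \frac{16\pi^2 k^2}{\diam(X_g)^2}$; the hypothesis $k \prec \diam(X_g)$ keeps the second term uniformly bounded, so $\lambda_k(X_g) \prec 1$, which is $\prec \sL_k(X_g)$ since $\sL_k(X_g) \geq 2\arcsinh 1$ is bounded below.

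In the second regime, I would let $\gamma = \bigcup_i \gamma_i$ realize $\sL_k(X_g)$ and cut $X_g$ into $k+1$ components $M_1, \ldots, M_{k+1}$. Since $\ell_{\gamma_i} \leq 2\arcsinh 1$, the Collar Lemma supplies collars $T(\gamma_i)$ of half-width $w(\gamma_i) \geq \arcsinh 1$. I then build $k+1$ test functions $\psi_j \in W^{1,2}(X_g)$ by the same formula as in the $k=2$ case: $\psi_j \equiv 1$ on the core $M_j \setminus \bigcup_t T(\gamma_{j_t})$, $\psi_j \equiv 0$ off $M_j \cup \bigcup_t T(\gamma_{j_t})$, and the $\tanh$-transition $\tfrac{1}{2} + \tfrac{\tanh\rho}{2\tanh w(\gamma_{j_t})}$ on each collar. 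Two estimates, both uniform in $k$ and in the topological types of the $M_j$, are required: \emph{(i)} each core has area $\succ 1$, following from the elementary bound $\ell/\sinh(\ell/2) \leq 2$ controlling the total half-collar area inside $M_j$, combined with $\area(M_j) = 2\pi|\chi(M_j)|$ and the planar bound $s_j \leq |\chi(M_j)| + 2$; and \emph{(ii)} $\int_{T(\gamma_i)}|\nabla \psi_j|^2 \prec \ell_{\gamma_i}$, from $\tanh w(\gamma_i) \geq 1/\sqrt{2}$ and the convergence of $\int_\R \cosh^{-3}\rho\, d\rho$.

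Linear independence of the $\psi_j$ then produces a nonzero combination $\psi = \sum_j a_j \psi_j$ orthogonal to $\phi_0, \ldots, \phi_{k-1}$, and the variational characterization gives $\lambda_k(X_g) \leq \int|\nabla\psi|^2/\int\psi^2$. On each collar $T(\gamma_i)$ only the two adjacent test functions are nonzero and they satisfy $\psi_j + \psi_{j'} \equiv 1$, so $\nabla\psi = (a_j - a_{j'})\nabla\psi_j$ there; summing over $i$ gives $\int |\nabla\psi|^2 \prec \sL_k(X_g) \|a\|^2$, while disjointness of the cores yields $\int \psi^2 \succ \|a\|^2$. This produces $\lambda_k(X_g) \prec \sL_k(X_g)$, and the ``in particular'' statement is then immediate from $\diam(X_g) \geq \ln(4g-3)$. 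The main subtlety is that the supports of the $\psi_j$ overlap pairwise on collars, so part (2) of Theorem \ref{maxminpr} cannot be applied directly; the identity $\psi_j + \psi_{j'} = 1$ on each shared collar is precisely what makes the cross-terms collapse into the clean $(a_j - a_{j'})^2$ factor with no $k$-dependence entering the constant.
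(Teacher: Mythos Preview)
Your proposal is correct and follows the same two-case strategy the paper intends: the paper presents this proposition as a remark, saying only ``the proof above actually gives that,'' so there is no separate argument to compare against beyond the $k=2$ case you are generalizing. Your treatment is in fact more careful than the paper's on one point: the paper asserts ``the supports of $\psi_j$ are pairwisely disjoint,'' which is not literally true (they overlap on the shared collars), and you correctly identify the collar identity $\psi_j+\psi_{j'}\equiv 1$ as the mechanism that controls the cross-terms in both numerator and denominator, yielding $\int|\nabla\psi|^2 \prec \|a\|^2\,\sL_k(X_g)$ and $\int\psi^2 \succ \|a\|^2$ with constants independent of $k$.
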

\end{rem*}

Now we prove that the upper bound in Theorem \ref{mt-1-up} is optimal in the sense that for all $g\geq 3$, there exists a closed hyperbolic surface $\mathcal Y_g$ of genus $g$ such that $\lambda_2(\mathcal Y_g)\asymp \sL_2(\mathcal Y_g)$. It suffices to consider cases for large $g$. The construction is based on recent results on Weil-Petersson random surfaces. By \cite[Theorem 4.2, Theorem 4.5, Theorem 4.8]{mirzakhani2013growth} we know that
\be\label{mirz-inj}
\liminf\limits_{g\to \infty}\Prob\left(X_g\in \sM_g; \ \sys(X_g)\asymp 1\right)>0,
\ene 
\be\label{mirz-inr}
\lim\limits_{g\to \infty}\Prob\left(X_g\in \sM_g; \ \max_{p \in X_g}\mathrm{inj}(p)\asymp \ln g\right)=1
\ene
where $\mathrm{inj}(p)$ is the injectivity radius of $X_g$ at $p$, and
\be\label{mirz-ch}
\lim\limits_{g\to \infty}\Prob\left(X_g\in \sM_g; \ h(X_g)\asymp 1\right)=1
\ene
All three results above have more delicate statements in \cite{mirzakhani2013growth}. 

\begin{con*}[for $\mathcal Y_g$] We construct $\sY_g$ by using the following four steps (see Figure \ref{fig:exop} for an illustration):
\ben
\item[(step-1)] for all large enough $g$, firstly by \eqref{mirz-inj}, \eqref{mirz-inr} and \eqref{mirz-ch} we know that there exists a closed hyperbolic surface $X_{g-2}$ of genus $(g-2)$ such that 
\[\sys(X_{g-2})\asymp 1, \ \max_{p \in X_g}\mathrm{inj}(p)\asymp \ln g \  \textit{and} \ h(X_{g-2})\asymp 1;\] 
\item[(step-2)] let $p_0\in X_{g-2}$ such that $\inj(p)\asymp \ln g$. Then we remove $p_0$ from $X_{g-2}$ and let $\bar{X}_{g-2,1}$ be the unique complete hyperbolic surface corresponding to the complex structure on $X_{g-2}\setminus \{p_0\}$ induced from $X_{g-2}$;
\item[(step-3)] by  Theorem \ref{parlierlength} we let $X_{g-2,1}(1) \in \sM_{g-2,1}(1)$ be a compact hyperbolic surface of genus $(g-2)$ with one geodesic boundary of length $1$ such that for all simple closed curve $\gamma$, $\ell_\gamma(X_{g-2,1}(1))\geq \ell_\gamma(\bar{X}_{g-2,1})$. To simplify notation, we use $X_{g-2,1}$ for $X_{g-2,1}(1)$;
\item[(step-4)] let $S_{2,1}\in \sM_{2,1}(1)$ be any fixed compact hyperbolic surface of genus $2$ with one geodesic boundary of length $1$. The desired closed hyperbolic $\sY_g$ is obtained by gluing $X_{g-2,1}$ and $S_{2,1}$ along their boundaries by any twist parameter.  
\een
\end{con*}

\begin{figure}[h]
    \centering
    \includegraphics[width=6.4 in]{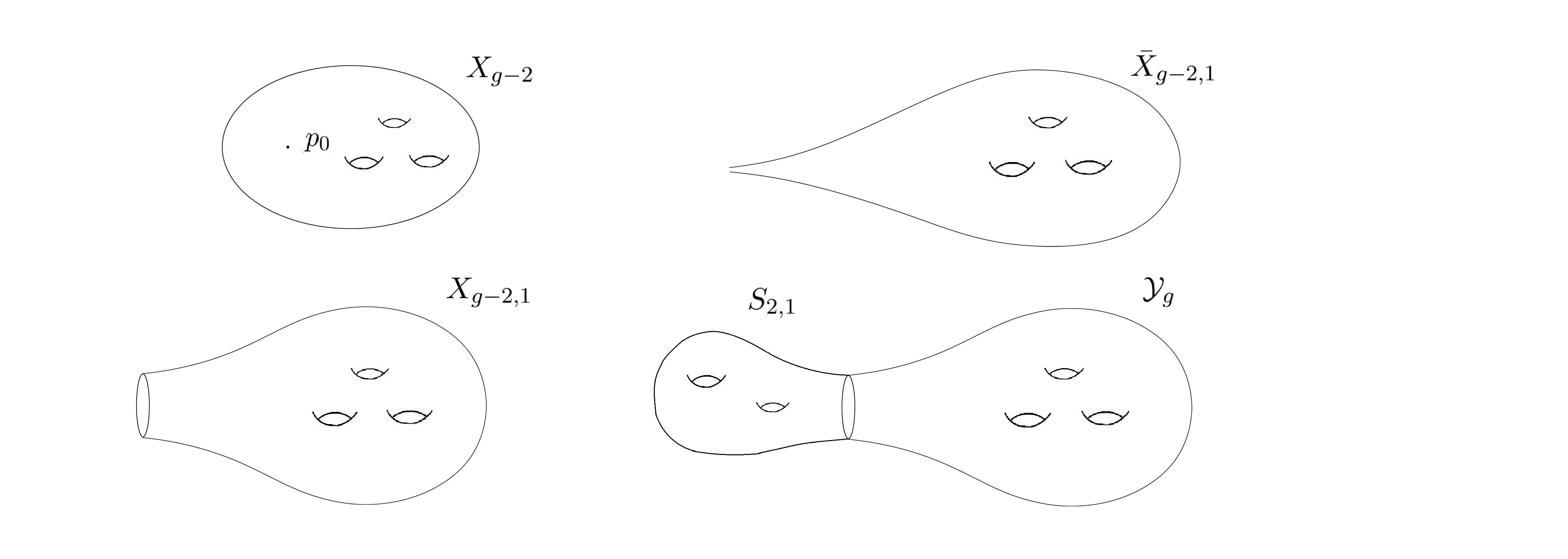}
    \caption{ Construction of $\sY_g$}
    \label{fig:exop}
\end{figure}

Our aim is to show that
\be
\lambda_2(\sY_g)\asymp \sL_2(\sY_g)\asymp 1.
\ene

\noindent By construction,  \[\sL_2(\mathcal{Y}_g)\leq 1+ \sL_1(S_{2,1})\prec 1.\]

\noindent We have shown that $\lambda_2(\sY_g)\prec \sL_2(\sY_g)$. So by Cheeger's inequality, \ie, Theorem \ref{cheineq}, it remains to show that
\be\label{up-e-a}
h(\mathcal{Y}_g)\succ 1.
\ene 

Before showing \eqref{up-e-a}, by the construction of $\sY_g$ we make the following two useful observations:
\ben
\item since $\inj(p_0)\asymp \ln g$, for large $g$ the cusped hyperbolic surface $\bar{X}_{g-2,1}$ satisfies the so-called large cusp condition in \cite{brooks1999platonic}. By \cite[Theorem 4.1]{brooks1999platonic} we know that
\be\label{eq-e-h}
h(\bar{X}_{g-2,1})\asymp h(X_{g-2})\asymp 1.
\ene

\item By Schwarz Lemma we know that $\sys(\bar{X}_{g-2,1})\geq \sys(X_{g-2})\asymp 1$. So it follows from  Theorem \ref{parlierlength} that  
\be\label{eq-e-sys}
\sys(X_{g-2,1})\geq \sys(\bar{X}_{g-2,1})\succ 1.
\ene
\een

First we show that
\bl \label{uniformgapxg-n1}
For large $g>0$,
\[h(X_{g-2,1})\succ 1.\]
\el
\bp
By Lemma \ref{collar lemma} the boundary component of $X_{g-2,1}$ contains a collar $T$ of width $w_0=\arcsinh \frac{1}{\sinh 0.5}.$ By \cite[Remark 3.5]{HM96}, each component of the set of curves realizing $h(X_{g-2,1})$ is embedded. By small permutation, one may assume that $\Gamma$, consisting of \emph{simple} curves, separates $X_{g-2,1}$ into two parts $A$ and $B$ with  $\partial A \cap \mathring{X}_{g-2,1}=\partial B\cap \mathring{X}_{g-2,1}=\mathring{\Gamma}$ such that
\be \label{in-l-9}
h(X_{g-2,1})\geq\frac{1}{2}\frac{\ell(\Gamma)}{\min\{\area(A),\area(B)\}}=\frac{1}{2}\max\left\{\frac{\ell(\partial A)}{\area(A)},\frac{\ell(\partial B)}{\area(B)}\right\}.
\ene Now we prove the claim case by case.  

\underline{Case-1:}   \textit{both $A$ and $B$ contain only disks or cylinders.} For this case it follows from Lemma \ref{isoperimetric} that \be\label{xg-n1case1}
\frac{\ell(\Gamma)}{\min\{\area(A),\area(B)\}}\geq \frac{1}{2}.\ene

\underline{Case-2:}  \textit{either $A$ or $B$ does not contain only disks and cylinders, and $$\min\{\area(A),\area(B)\}\leq 16\pi.$$} In this  case, either $\Gamma$ crosses  the half collar $T$  or contains a homotopically non-trivial loop in $X_{g-2,1}$. By \eqref{eq-e-sys} we have
\[\ell(\Gamma)\geq \min\{w_0, \sys(X_{g-2,1})\}\succ 1\]
implying that 
\be\label{xg-n1case2}
\frac{\ell(\Gamma)}{\min\{\area(A),\area(B)\}}\geq \frac{\ell(\Gamma)}{16\pi}\succ 1.\ene

\underline{Case-3:}  \textit{either $A$ or $B$ does not contain only disks and cylinders, and $$\min\{\area(A),\area(B)\}\geq 16\pi.$$} 
\noindent First we recall the construction of Step-3 above in \cite{parlier2005lengths}: take two simple closed geodesics $\{\alpha,\beta\}\subset \bar{X}_{g-2,1}$  which along with the cusp bound a pair of pants $\tilde{P}$; then replace $\tilde{P}$ by $P$,  a pair of  pants with boundary lengths $\{1, \ell_\alpha(\bar{X}_{g-2,1}), \ell_{\beta}(\bar{X}_{g-2,1})\}$, and the desired surface $X_{g-2,1}$ is obtained by gluing $P$ back to $\bar{X}_{g-2,1}\setminus \tilde{P}$ along $\alpha$ and $\beta$ with the unchanged twist parameters. It is shown in \cite[Lemma 3.1]{parlier2005lengths} that for  any  simple  arc $\gamma\subset P$ with  two endpoints on $\alpha\cup\beta$,
$$\ell_{\gamma}(X_{g-2,1})\geq \ell_{\tilde{\gamma}}(\bar{X}_{g-2,1})$$
where $\tilde{\gamma}\subset \tilde{P}$ is the simple geodesic arc with the same endpoints and homotopy type as $\gamma \subset P$. Now we start to prove this case. Similar to Case-2, we also have $\ell(\Gamma)\geq \min\{w_0, \sys(X_{g-2,1})\}\succ 1$. Recall that for the half collar $T$, we always have $\area(T)\leq 2\pi$ and $\ell(\partial T)\asymp 1$. Clearly $\partial(A\cap(X_{g-2,1}\setminus T))\subset (\partial A)\cup(\partial T)=\Gamma \cup (\partial T)$. This gives that
\[\ell(\partial(A\cap(X_{g-2,1}\setminus T))\leq \ell(\partial A)+\ell(\partial T)\prec \ell(\partial A).\]
Similarly,
\[\ell(\partial(B\cap(X_{g-2,1}\setminus T)) \prec \ell(\partial B).\] 
Since $\area(T)\leq 2\pi$ and $\min\{\area(A),\area(B)\}\geq 16\pi$, 
\[\area(A\cap(X_{g-2,1}\setminus T))\asymp \area(A) \ \textit{and} \ \area(B\cap(X_{g-2,1}\setminus T))\asymp \area(B).\]
So we have
\begin{equation}\label{dealintersectboundary}
\begin{aligned}
&\max \left\{\frac{\ell(\partial(A\cap(X_{g-2,1}\setminus T)) )}{\area(A\cap (X_{g-2,1}\setminus T))},\frac{\ell(\partial(B\cap(X_{g-2,1}\setminus T)))}{\area(B\cap (X_{g-2,1}\setminus T))}\right\} \\
&\prec \max\left\{\frac{\ell(\partial A)}{\area(A)},\frac{\ell(\partial B)}{\area(B)}\right\}\leq 2h(X_{g-2,1}).
\end{aligned}
\end{equation}
It suffices  to give a uniform positive lower bound  for the left hand side of \eqref{dealintersectboundary}.
\begin{figure}[h]
    \centering
    \includegraphics[width=3 in]{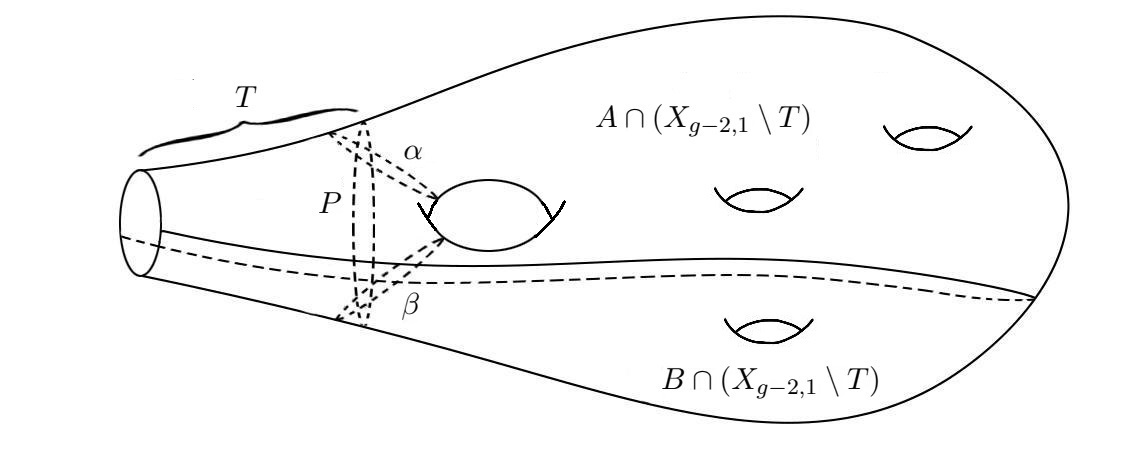}
    \caption{Replaced by interior domains}
    \label{fig:prop721}
\end{figure}
$\mathrm{WLOG}$ we assume that $\area(A\cap (X_{g-2,1}\setminus T))\leq \area(B\cap (X_{g-2,1}\setminus T))$. Our aim is to show that
\be
\frac{\ell(\partial(A\cap(X_{g-2,1}\setminus T)) )}{\area(A\cap (X_{g-2,1}\setminus T))} \succ 1.
\ene
Recall that by our choice, $\Gamma$ consists of simple curves, so each component of $\partial(A\cap(X_{g-2,1}\setminus T))$ is also simple. For sub-arcs of $\partial(A\cap(X_{g-2,1}\setminus T))$ in the pair of pants $P$ with endpoints on $\alpha\cup \beta$, we replace each of them by  the  corresponding simple geodesic arc in $\tilde{P}$ to make them shorter; for sub-arcs of $\partial(A\cap(X_{g-2,1}\setminus T))$ in $X_{g-1}\setminus P$, we keep them to be invariant. Then as introduced above, by \cite[Lemma 3.1]{parlier2005lengths} we will get a set $C\subset \bar{X}_{g-2,1}$ of simple curves which has length $\ell(C)\leq \ell(\partial (A\cap (X_{g-2,1}\setminus T)))$, and  bounds a domain $\tilde{A}\subset \bar{X}_{g-2,1}$ that coincides with $A\cap(X_{g-2,1}\setminus P)$ outside $\tilde{P}$. Since $\area(P)= 2\pi$, $\area(T)\leq 2\pi$ and $\min\{\area(A),\area(B)\}\geq 16\pi$, we have
\[\area(A\cap (X_{g-2,1}\setminus T))-\area(P)\asymp \area(A\cap (X_{g-2,1}\setminus T))\] 
which implies that
$$
\frac{\ell(C)}{\area(\tilde{A})}\leq \frac{\ell(\partial (A\cap (X_{g-2,1}\setminus T)))}{\area(A\cap (X_{g-2,1}\setminus T))-\area(P)}\prec \frac{\ell(\partial (A\cap (X_{g-2,1}\setminus T)))}{\area(A\cap (X_{g-2,1}\setminus T))}.
$$
Similarly, we have
\[\area(\bar{X}_{g-2,1})-\area(\tilde{A})\asymp \area(B\cap (X_{g-2,1}\setminus T))\]
which implies that
$$
 \frac{\ell(C)}{\area(\bar{X}_{g-2,1})-\area(\tilde{A})}\prec \frac{\ell(\partial (A\cap (X_{g-2,1}\setminus T)))}{\area(B\cap (X_{g-2,1}\setminus T))}\leq  \frac{\ell(\partial (A\cap (X_{g-2,1}\setminus T)))}{\area(A\cap (X_{g-2,1}\setminus T))}.
 $$
Thus, it follows from \eqref{eq-e-h} that  
\be\label{interbound}
\begin{aligned}
 \frac{\ell(\partial (A\cap (X_{g-2,1}\setminus T)))}{\area(A\cap (X_{g-2,1}\setminus T))}&\succ \max\left\{\frac{\ell(C)}{\area(\tilde{A})},  \frac{\ell(C)}{\area(\bar{X}_{g-2,1})-\area(\tilde{A})}\right\}\\
&\geq h(\bar{X}_{g-2,1})\asymp 1.
\end{aligned}
\ene
Combining (\ref{dealintersectboundary}) and (\ref{interbound}), we have 
\be\label{xg-n1case3-1}
h(X_{g-2,1})\succ 1.
\ene

Then the conclusion follows from \eqref{in-l-9}, \eqref{xg-n1case1}, \eqref{xg-n1case2} and \eqref{xg-n1case3-1}.
\ep

Now we are ready to show the surface $\mathcal{Y}_g$ is the desired surface in Theorem \ref{mt-1-up}.
\bp[Proof of Part $(2)$ of Theorem \ref{mt-1-up}]
As discussed before, it suffices to show \eqref{up-e-a}, \ie, $$h(\sY_g)\succ 1.$$ Let $\Gamma$ be the set of curves realizing the Cheeger constant $h(\mathcal{Y}_g)$ of $\mathcal{Y}_g$. See \cite{cheegerconstant} for the existence and description of  $\Gamma$. Assume that $\Gamma$ separates $\mathcal{Y}_g$ into two pieces $A\cup B$, with $\Gamma=\partial A=\partial B=A\cap B$ and   $\area(A)\leq \area(B)$. Now we split the proof into the following several cases.

\underline{Case-1:}  \textit{$A\cap X_{g-2,1}=\emptyset.$}
Set $w_0=\min\{\sys(S_{2,1}),w(\gamma)\}$ where $\sys(S_{2,1})$ is the length of shortest closed geodesic in $S_{2,1}$ and $w(\gamma)$ is the width of collar centered at $\gamma$ given by Lemma \ref{collar lemma}. If $A$ contains either disks or cylinders, by Lemma \ref{isoperimetric} we have
\[\frac{\ell(\Gamma)}{\area(A)}\geq \frac{1}{2}.\]
Otherwise, we have $\ell(\Gamma)\geq w_0$ giving that
\[\frac{\ell(\Gamma)}{\area(A)}\geq \frac{w_0}{\area(S_{2,1})}.\]
So we have \be\label{hxg-1}
\frac{\ell(\Gamma)}{\area(A)}\geq \min\left\{\frac{1}{2},\frac{w_0}{6\pi}\right\}\asymp 1.
\ene

\underline{Case-2:}  \textit{$A\subset X_{g-2,1}$.} Since $\area(B)\geq \frac{\area(\mathcal{Y}_g)}{2}=2\pi(g-1)$, $$\area(B\cap X_{g-2,1})\asymp \area(B)\geq \area(A).$$ So we have
\be\label{hxg-2}
\begin{aligned}
\frac{\ell(\Gamma)}{\area(A)}&\succ  \max \left\{\frac{\ell(\Gamma)}{\area(A\cap X_{g-2,1})},\frac{\ell(\Gamma)}{\area(B\cap X_{g-2,1})}\right\}\\
&\geq h(X_{g-2,1})\succ 1. 
\end{aligned}
\ene

\underline{Case-3:} \textit{$A\cap X_{g-2,1}\neq\emptyset$ and  $A\cap S_{2,1}\neq \emptyset$.} For this case, we have two sub-cases.
\underline{Case-3-(a):} \textit{either $\Gamma\cap \gamma=\emptyset$ or $\gamma\subset \Gamma$.} By the same argument as in Case-1 and Case-2 we have 
\be\label{hxg-3}
\frac{\ell(\Gamma)}{\area(A)}\succ \min\left\{\frac{1}{2},\frac{w_0}{6\pi}, h(X_{g-2,1})\right\}\succ 1.
\ene
\noindent 
\underline{Case-3-(b):}  \textit{$\Gamma\cap\gamma\neq \emptyset$, and $\gamma\not\subset\Gamma$.} In this case, firstly by \cite{cheegerconstant} it is known that $\gamma$ transversely intersects with $\Gamma$.  If $A$ contains disks or cylinders, it follows from Lemma  \ref{isoperimetric} that  \be\label{hxg-4-1}
\frac{\ell(\Gamma)}{\area(A)}\succ 1.
\ene
Otherwise  $\ell(\Gamma)\geq w_0$. If $\area(A\cap X_{g-2,1})\leq 16\pi$, then $\area(A)\leq\area(A\cap X_{g-2,1})+\area(S_{2,1}) \leq 22\pi$. So we have \be\label{hxg-4-2}
\frac{\ell(\Gamma)}{\area(A)}\geq \frac{w_0}{22\pi}\asymp 1.
\ene
If $\area(X_{g-2,1}\cap A)\geq 16\pi$, since $\area(\sY_g\setminus X_{g-2,1})=\area(S_{2,1})=6\pi$, it is not hard to see that
\[\area(A)\asymp \area(A\cap X_{g-2,1})\prec \area(B\cap X_{g-2,1}).\]
So we have \be\label{hxg-4-3}
\begin{aligned}
\frac{\ell(\Gamma)}{\area(A)}&\succ\frac{\ell(\Gamma\cap X_{g-2,1})}{A \cap \area(X_{g-2,1})}\\
\succ&\max \left\{\frac{\ell(\Gamma\cap X_{g-2,1})}{\area(A\cap X_{g-2,1})},\frac{\ell(\Gamma\cap X_{g-2,1})}{\area(B\cap X_{g-2,1})}\right\}\\
\geq &h(X_{g-2,1})\succ 1.
\end{aligned}
\ene
Combining \eqref{hxg-3}, \eqref{hxg-4-1},  \eqref{hxg-4-2} and \eqref{hxg-4-3}, for Case-3 we always have
\be\label{hxg-3-t}
\frac{\ell(\Gamma)}{\area(A)}\succ 1.
\ene

Then the conclusion follows from \eqref{hxg-1}, \eqref{hxg-2} and \eqref{hxg-3-t}.
\ep

\begin{rem*}
For a general index $k\in[1,2g-3]$ independent of $g$, through replacing $S_{2,1}$ with a compact hyperbolic surface $S_{k,1}$ of genus $k$ with one geodesic boundary of length $1$, then it follows from the construction and similar argument above that there exists a closed hyperbolic surface $\mathcal{Y}_g$ of genus $g$ such that $$\lambda_k(\mathcal{Y}_g)\asymp \sL_k(\mathcal{Y}_g)\asymp 1.$$
\end{rem*} 

\bp[Proof of Theorem \ref{mt-1}]
It clearly follows from Theorem \ref{mt-1-lb} and Theorem \ref{mt-1-up}.
\ep

\section{Bounds on $\frac{\lambda_2(X_g)}{\sL_2(X_g)}$ for random surfaces of large genus}\label{geometryquanityrandomsurface}

Let $\sM_g$ be the moduli space of closed Riemann surfaces of genus $g$ endowed with the Weil-Petersson metric that has a natural and magic form in Fenchel-Nielsen coordinates due to Wolpert \cite{Wolpert82}. In this section we study the asymptotic behavior of the ratio $\frac{\lambda_2(X_g)}{\sL_2(X_g)}$ over $\sM_g$ for large genus and complete the proof of Theorem \ref{mt-3}. We use the same notations as in \cite{NWX20, WX21}.

Mirzakhani in \cite{mirzakhani2013growth} showed that 
\[\lim \limits_{g\to \infty}\Prob\left(X_g\in \sM_g; \  \lambda_1(X_g)\geq \frac{1}{4}\left(\frac{\ln(2)}{2\pi+\ln(2)} \right)^2 \right)=1.\]
Joint with Xue, the second named author in \cite{WX21} showed that for any $\epsilon>0$,
\[\lim \limits_{g\to \infty}\Prob\left(X_g\in \sM_g; \  \lambda_1(X_g)\geq \frac{3}{16}-\epsilon \right)=1,\]
for which one may see an independent proof by Lipnowski-Wright in \cite{LW21}. One may also see related results for random covers of a fixed closed hyperbolic surface in \cite{MNP20} by Magee, Naud and Puder. Then by \eqref{1/4-ub-ei} we have
\be \label{whp-2-1}
\lim \limits_{g\to \infty}\Prob\left(X_g\in \sM_g; \  \lambda_2(X_g)\asymp 1 \right)=1.
\ene

\noindent For $\sL_2(X_g)$, first by Proposition \ref{boundlk} we know that for all $g\geq 3$,
\be\label{up-l2}
\sup\limits_{X_g\in \sM_g}\sL_2(X_g)\prec \ln g.
\ene

\noindent To prove Theorem \ref{mt-3}, it suffices to show that
\[\lim \limits_{g\to \infty}\Prob\left(X_g\in \sM_g; \  \sL_2(X_g)\succ \ln g \right)=1.\]

Let  $w:\{2,3,\cdots\}\to \mathbb{R}^+$ be a function satisfying 
\be\label{eq-w-l}
\lim \limits_{g\to \infty}w(g)=\infty \ \textit{and} \ \lim \limits_{g\to \infty}\frac{w(g)}{\ln\ln g}=0.
\ene
We prove 
 \begin{proposition}\label{L2random}
The the following  limit holds: \begin{equation*}
   \lim\limits_{g\to \infty} \Prob\Big( X_g\in \M_{g}; \ \sL_2(X_g)\geq 4\ln g-10\ln \ln g-\omega(g)\Big)=1.
\end{equation*}
\end{proposition}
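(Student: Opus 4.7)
The plan is to apply Markov's inequality to the expected number of simple closed multi-geodesics separating $X_g$ into three components of total length at most $L$, and to compute this expectation via Mirzakhani's integration formula. With $L = 4\ln g - 10\ln\ln g - \omega(g)$, the goal is to show that the total expected count tends to zero as $g\to\infty$.

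First I would classify the topological types of a simple closed multi-curve $\gamma = \gamma_1\sqcup\cdots\sqcup\gamma_m$ on $X_g$ whose complement has three connected components. The Euler characteristic identities $\sum_j g_j = g - m + 2$ and $\sum_j n_j = 2m$, together with connectedness of the dual graph on three vertices, leave very few possibilities: for $m=2$ the pieces must be $(h_1,1),(g_2,2),(h_3,1)$ with $h_1+g_2+h_3=g$ and $h_1,g_2,h_3\geq 1$; for $m\geq 3$ one obtains triangles, graphs with self-loops, and so on. For each such type $\tau$, Mirzakhani's integration formula yields
\[
\E[N_\tau(L)] \;=\; \frac{1}{V_g\,|\Sym(\tau)|}\int_{\sum_i x_i\leq L,\;x_i>0}\prod_{i=1}^m x_i\prod_{j}V_{g_j,n_j}(\vec{x}|_{\partial M_j})\,dx_1\cdots dx_m,
\]
which I would estimate by combining (a) the Mirzakhani-Zograf uniform upper bound $V_{g,n}(L_1,\dots,L_n)\leq V_{g,n}\prod_i \frac{\sinh(L_i/2)}{L_i/2}$ on any large-genus factor, (b) the exact low-genus formulas $V_{1,1}(L)=(L^2+4\pi^2)/48$ and $V_{0,3}\equiv 1$, and (c) the Mirzakhani-Zograf asymptotic $V_{g,n}\sim c_n(2g+n-3)!(4\pi^2)^{2g+n-3}/\sqrt{g}$ to compute the ratios $V_{g_1,n_1}\cdots V_{g_k,n_k}/V_g$.

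The dominant type I expect is $m=2$ with $(h_1,g_2,h_3)=(1,g-2,1)$. Here the Mirzakhani-Zograf ratio $V_{1,1}^2V_{g-2,2}/V_g\asymp g^{-2}$, and the integrand is bounded by
\[
xy\cdot\frac{x^2+4\pi^2}{48}\cdot V_{g-2,2}\cdot\frac{4\sinh(x/2)\sinh(y/2)}{xy}\cdot\frac{y^2+4\pi^2}{48}.
\]
A direct computation using $\int_0^s u^2(s-u)^2\,du = s^5/30$ and $\int_0^L s^5 e^{s/2}\,ds\asymp L^5 e^{L/2}$ gives $\E[N_\tau(L)]\asymp L^5 e^{L/2}/g^2$. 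For $L=4\ln g-10\ln\ln g-\omega(g)$ we have $e^{L/2}\asymp g^2(\ln g)^{-5}e^{-\omega(g)/2}$ and $L^5\asymp(\ln g)^5$, so $\E[N_\tau(L)]\asymp e^{-\omega(g)/2}\to 0$.

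Finally I would verify subdominance of every other topological type. For $m=2$ with pieces $(a,1),(g-a-b,2),(b,1)$ where $a+b=:k>2$, the volume ratio picks up a further factor $g^{-2(k-2)}$ while the integrand can only gain a polynomial factor $L^{O(k)}$ (since each $V_{a,1}$ is a polynomial of degree $6a-4$ in $L$); hence such contributions are $O((\ln g)^{O(k)}/g^{2(k-2)}) = o(1)$. For $m\geq 3$ (triangle types, double-edge plus pendant types such as $(1,1),(0,3),(g-2,2)$, self-loop types), at most one large-genus piece contributes an exponential $e^{\cdot/2}$, while the remaining small-genus pieces contribute only polynomial-in-length factors; together with a strictly smaller volume ratio, each such type yields $o(1)$. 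Since the number of relevant topological types is at most polynomial in $g$, summing over them still gives $\sum_\tau \E[N_\tau(L)]=o(1)$, and Markov's inequality completes the proof. The main obstacle, in my view, is the bookkeeping in this last step: one must uniformly estimate all topological types and confirm that the Mirzakhani-Zograf volume ratios absorb the $L^{O(m)}e^{L/2}$ growth of the integrals; the specific constants $4\ln g$ and $10\ln\ln g$ arise precisely because the two end pieces $(1,1)$ contribute only polynomial factors $\sim L^2$ via the exact $V_{1,1}$, producing the $L^5 e^{L/2}$ prefactor that balances $g^{-2}$ at $L\approx 4\ln g$.
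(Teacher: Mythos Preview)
Your overall strategy matches the paper's exactly: apply Markov's inequality to the expected count of separating multi-geodesics of total length $\leq L_g$, compute this via Mirzakhani's integration formula, and bound each factor $V_{g_j,n_j}(\cdot)$ either by its exact polynomial form (when $|\chi|$ is bounded) or by the $\sinh$-type upper bound (when $|\chi|$ is large). Your identification of the dominant configuration---two curves, pieces $S_{1,1},S_{g-2,2},S_{1,1}$, yielding $\asymp L^5 e^{L/2}/g^2$---is correct, and this is precisely what forces the threshold $4\ln g - 10\ln\ln g$.

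There is, however, a real gap in your subdominance argument. You claim that for every non-dominant type ``at most one large-genus piece contributes an exponential $e^{\cdot/2}$'' and the rest contribute only polynomially in $L$. This is not true in general: nothing forbids two or all three components from having genus growing with $g$ (for instance splits with $|\chi(X_1)|,|\chi(X_2)|,|\chi(X_3)|$ all comparable to $g$). In those configurations one must apply the $\sinh$ bound to at least two of the $V^j$, and since each length variable appears in exactly two of the three factors, the integrand then carries $e^{x_i}$ rather than $e^{x_i/2}$; the integral over $\{\sum x_i\leq L\}$ is of order $e^L$, not $e^{L/2}$. With $L\approx 4\ln g$ this is roughly $g^8$ rather than $g^2$, so one needs the volume ratio, \emph{summed over all such configurations}, to decay faster than $g^{-8}$. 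Your sketch supplies neither this estimate nor a mechanism for it, and the remark that ``the number of relevant topological types is at most polynomial in $g$'' does not help: a polynomial-in-$g$ number of terms, each of uncontrolled size, need not sum to $o(1)$.

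The paper closes this gap by stratifying not by the number $m$ of curves but by the ordered Euler characteristics $|\chi(X_1)|\leq|\chi(X_2)|\leq|\chi(X_3)|$, into three regimes: (i) $|\chi(X_2)|\leq 8$, where only $V^3$ needs the $\sinh$ bound and one obtains $e^{L/2}$ with volume ratio $\prec g^{-(|\chi(X_1)|+|\chi(X_2)|)}$; (ii) $|\chi(X_1)|\leq 9\leq|\chi(X_2)|$, where two $\sinh$ bounds give $e^L$ but the summed volume ratio is $\prec g^{-(9+|\chi(X_1)|)}$; (iii) $|\chi(X_1)|\geq 10$, where all three $\sinh$ bounds give $e^L$ but the summed volume ratio is $\prec g^{-10}$. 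The summation over types within regimes (ii) and (iii) is controlled via the combinatorial volume estimate \cite[Lemma~21]{NWX20}; this three-regime decomposition and summation lemma are exactly the ingredients your outline is missing.
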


\begin{rem*}
The proof of Proposition \ref{L2random} below actually also yields that for any $\epsilon>0$ and any fixed $k\geq 1$ independent of $g$,
 \[\lim\limits_{g\to \infty} \Prob\Big( X_g\in \M_{g}; \ \sL_k(X_g)\geq (2k-\epsilon)\ln g\Big)=1,\]
which we leave to interested readers. For $k=1$, this was proved in \cite{NWX20}.
\end{rem*}

Let $L>0$ be a constant which may depend on $g$. For $X_g\in \M_{g}$ with $\sL_2(X_g)\leq L$ realized by  $\gamma=\cup_{i=1}^k\gamma_i$ that separates $X$ into three components $X_{g_1,k_{12}+k_{13}},X_{g_2,k_{21}+k_{23}},X_{g_3,k_{31}+k_{32}}$, denoted by $X_1,X_2,X_3$ respectively with $|\chi(X_1)|\leq|\chi(X_2)|\leq |\chi(X_3)|$ for convenience, then the indices here satisfy  \begin{equation}\label{condition}
\left\{
\begin{array}{ll}
  & k_{ij}=k_{ji} \text{ for all } i\neq j,\\
  & \text{At least two } k_{ij}'s \text{ are not } 0,\\
  &|\chi(X_1)|\leq|\chi(X_2)|\leq |\chi(X_3)|,\\
  & \sum_{i=1}^3g_i+\sum_{i<j}k_{ij}=g+2. \\
   \end{array}
   \right.
\end{equation}
Take $\alpha$ to be a  set of simple multi-curve which topologically separates $S_g$ into such three components, and set $$
N_{g_1,g_2,g_3}^{k_{12},k_{13},k_{23}}(X_g,L)=N_\alpha(X_g,L)=\#\{\gamma\in {\rm Mod}_\emph{g} \cdot \alpha,\ell_\gamma(X_g)\leq L\}
$$
as a function on $\M_g$. Now set
\be
L_g=4\log g-10\log\log g-\omega(g).
\ene  
To prove Proposition \ref{L2random},
it suffices to show the following limit $$
\lim\limits_{g\to\infty}\sum\limits_{(\ref{condition})}\E\left(N_{g_1,g_2,g_3}^{k_{12},k_{13},k_{23}}(X_g,L_g)\right)=0,
$$
where
\[\E\left(N_{g_1,g_2,g_3}^{k_{12},k_{13},k_{23}}(X_g,L_g)\right)=\frac{\int_{\sM_g}N_{g_1,g_2,g_3}^{k_{12},k_{13},k_{23}}(X_g,L_g)dX_g}{\Vol(\sM_g)},\]
since
\[ \Prob\Big( X_g\in \M_{g};\ \sL_2(X_g)\leq L_g\Big)\leq \sum\limits_{(\ref{condition})}\E\left(N_{g_1,g_2,g_3}^{k_{12},k_{13},k_{23}}(X_g,L_g)\right).\]

To simplify notation, we write
$$
\left\{
\begin{array}{ll}
  &V^1(\hat{x},\hat{y})=V_{g_1,k_{12}+k_{13}}(x_1,\cdots,x_{k_{12}},y_1,\cdots,y_{k_{13}}),\\
  &V^2(\hat{x},\hat{z})=V_{g_2,k_{21}+k_{23}}(x_1,\cdots,x_{k_{21}},z_1,\cdots,z_{k_{23}}),\\
  &V^3(\hat{y},\hat{z})=V_{g_3,k_{31}+k_{32}}(y_1,\cdots,y_{k_{31}},z_1,\cdots,z_{k_{32}}), \\
  &V^i=V^i(0\cdots,0).\\
   \end{array}
   \right.
$$
We split the proof of Proposition \ref{L2random} into the following three lemmas. 
\begin{lemma}\label{ineqx2bounded}
 For $|\chi(X_2)|\leq 8$ and $L>1$, we have $$
\E\Big(N_{g_1,g_2,g_3}^{k_{12},k_{13},k_{23}}(X,L)\Big)\prec  \frac{L^{3(|\chi(X_1)|+|\chi(X_2)|)-1}e^{\frac{L}{2}}}{g^{|\chi(X_1)|+|\chi(X_2)|}}.
 $$
 \end{lemma}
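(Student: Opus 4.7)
The plan is to estimate $\E(N_{g_1,g_2,g_3}^{k_{12},k_{13},k_{23}}(X_g,L))$ through Mirzakhani's integration formula followed by the Weil–Petersson volume asymptotics of Mirzakhani–Zograf. First I would rewrite the expectation as
\[
\E\!\left(N_{g_1,g_2,g_3}^{k_{12},k_{13},k_{23}}(X_g,L)\right)=\frac{1}{V_g\cdot|\Sym(\alpha)|}\int_{\sum\ell_i\leq L}V^1(\hat x,\hat y)\,V^2(\hat x,\hat z)\,V^3(\hat y,\hat z)\prod_i \ell_i\,d\hat x\,d\hat y\,d\hat z,
\]
where $\hat x\in\R^{k_{12}}$, $\hat y\in\R^{k_{13}}$, $\hat z\in\R^{k_{23}}$ are the lengths of the cutting multi-curve and $\alpha$ is a fixed topological representative. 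The integration formula is standard; each curve-length appears in exactly the two $V^i$ factors adjacent to it in the decomposition.

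Since $|\chi(X_1)|\leq|\chi(X_2)|\leq 8$, the pairs $(g_1,n_1)=(g_1,k_{12}+k_{13})$ and $(g_2,n_2)=(g_2,k_{12}+k_{23})$ range over a finite set, so $V^1,V^2$ are fixed polynomials of total degree $d_j=6g_j-6+2n_j=3|\chi(X_j)|-n_j$ with bounded coefficients. I would bound them by
\[
V^1(\hat x,\hat y)\,V^2(\hat x,\hat z)\prec V^1\cdot V^2\cdot\Big(1+\sum_i\ell_i\Big)^{d_1+d_2}.
\]
For $V^3$, which has essentially full genus, I would apply Mirzakhani's polynomial bound $V_{g,n}(L_1,\ldots,L_n)\leq V_{g,n}\prod_i(\sinh(L_i/2)/(L_i/2))$ and absorb the measure factors $\prod y_j\prod z_k$ into the hyperbolic sines to obtain
\[
V^3(\hat y,\hat z)\prod_j y_j\prod_k z_k\prec V^3\cdot\exp\!\Big(\tfrac{|\hat y|+|\hat z|}{2}\Big)\leq V^3\cdot e^{L/2}.
\]

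Substituting these bounds and evaluating the remaining simplex integral via $\int_{\Delta_k(L)}\prod_i\ell_i^{a_i}d\ell=L^{\sum a_i+k}\prod a_i!/(\sum a_i+k)!$, I would track the powers of $L$: the polynomial bound contributes $L^{d_1+d_2}$, and the simplex integration of the remaining $\prod_{i=1}^{k_{12}}x_i$ contributes $L^{2k_{12}+k_{13}+k_{23}}$. Using $n_1+n_2=2k_{12}+k_{13}+k_{23}$ the total degree collapses to $3(|\chi(X_1)|+|\chi(X_2)|)$; saving the extra $L^{-1}$ to reach $L^{3C-1}$ with $C=|\chi(X_1)|+|\chi(X_2)|$ requires either extracting one additional length factor into the $V^3$ bound or exploiting that at least one $\ell_i$ in the integrand is already linear. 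Finally, to convert $V^3/V_g$ into a power of $g$, I would invoke the Mirzakhani–Zograf asymptotic $V_{g,n}\asymp(2g-3+n)!\,(4\pi^2)^{2g-3+n}/\sqrt{g}$; together with the identity $2g_1+2g_2+2k_{12}+k_{13}+k_{23}=C+4$ coming from $\sum g_i+\sum_{i<j}k_{ij}=g+2$, this yields $V^3/V_g\asymp g^{-C}$.

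The main obstacle is the careful bookkeeping in combining the polynomial growth from $V^1V^2$ with the exponential growth from $V^3$ across a simplex whose variables are shared between the three factors. The most delicate point is capturing the precise exponent $3C-1$ rather than a weaker $3C$: this requires the observation that one length variable is absorbed into $V^3$'s exponential bound without being ``charged'' twice. Secondary care is required to verify that the polynomial coefficients of $V^1,V^2$ and the factor $|\Sym(\alpha)|$ remain uniformly bounded across the finitely many allowed topological types with $|\chi(X_2)|\leq 8$, so that all resulting constants may be hidden in the relation $\prec$.
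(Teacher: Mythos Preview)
Your approach is essentially the paper's: Mirzakhani's integration formula, polynomial bounds on $V^1,V^2$ (finitely many topological types since $|\chi(X_1)|\le|\chi(X_2)|\le 8$), the $\sinh$ bound on $V^3$, and the Mirzakhani--Zograf asymptotic giving $V^3/V_g\prec g^{-C}$. For the $L^{-1}$ saving you flag as delicate, the paper resolves it cleanly by not pulling $e^{L/2}$ out: instead it bounds $\prod_p x_p\le e^{\frac12\sum_p x_p}$ and keeps the full exponential inside the simplex integral, so that $\int_{\sum\ell_i\le L}e^{\frac12\sum\ell_i}\,d\ell\prec L^{k_{12}+k_{13}+k_{23}-1}e^{L/2}$ yields the exponent $3C-k_{12}-1\le 3C-1$ directly.
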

\begin{proof}
By \cite[Theorem 1.1]{Mirz07}, the Weil-Petersson volume $V_{g,n}(L_1,\cdots,L_n)$ of moduli space $\M_{g,n}(L_1,L_2,\cdots,L_n)$ is a polynomial of degree $6g-6+2n$. If $|\chi(X_1)|\leq|\chi(X_2)|\leq 8$, then all $k_{ij}$ are bounded. Recall that both $V^1$ and $V^2$ are polynomials of bounded degrees. Then  by Mirzakhani's integral formula (MIF) (see \cite[Theorem 7.1]{Mirz07}, or \cite[Theorem 6]{WX21}), \cite[Theorem 3.5]{mirzakhani2013growth} and \cite[Lemma 22]{NWX20} we have 
\begin{equation*}
    \begin{aligned}
    &\E\Big(N_{g_1,g_2,g_3}^{k_{12},k_{13},k_{23}}(X,L)\Big)\\
\prec& \frac{1}{V_g}\int_{\mathbb{R}_+^{k_{12}+k_{13}+k_{23}}}1_{[0,L]}\left(\sum\limits_{p=1}^{k_{12}}x_p+\sum\limits_{q=1}^{k_{13}}y_q+\sum\limits_{r=1}^{k_{23}}z_r\right)
    \cdot V^1(\hat{x},\hat{y})V^2(\hat{x},\hat{z})V^3(\hat{y},\hat{z})\\
&\Pi_{p=1}^{k_{12}}x_p\Pi_{q=1}^{k_{13}}y_q\Pi_{r=1}^{k_{23}}z_r
    \cdot dx_1\cdots dx_{k_{12}}dy_1\cdots dy_{k_{13}}dz_1\cdots dz_{k_{23}} \quad (\text{by MIF})\\
    \prec & \frac{1}{V_g}\int_{\sum\limits_{p=1}^{k_{12}}x_p+\sum\limits_{q=1}^{k_{13}}y_q+\sum\limits_{r=1}^{k_{23}}z_r\leq L}L^{6g_1+6g_2-12+4k_{12}+2k_{13}+2k_{23}} \\
&\cdot V^3(\hat{y},\hat{z}) dx_1\cdots dx_{k_{12}}dy_1\cdots dy_{k_{13}}dz_1\cdots dz_{k_{23}} \quad (\text{by \ \cite[Theorem 1.1]{Mirz07}}) \\
    \prec & \frac{1}{V_g}\int_{\sum\limits_{p=1}^{k_{12}}x_p+\sum\limits_{q=1}^{k_{13}}y_q+\sum\limits_{r=1}^{k_{23}}z_r\leq L}L^{6g_1+6g_2-12+4k_{12}+2k_{13}+2k_{23}}\cdot V^3
\cdot\Pi_{q=1}^{k_{13}}\frac{2\sinh \frac{y_q}{2}}{y_q}\\
    \cdot &\Pi_{r=1}^{k_{23}}\frac{2\sinh \frac{z_r}{2}}{z_r}
    \cdot\Pi_{p=1}^{k_{12}}x_p\Pi_{q=1}^{k_{13}}y_q\Pi_{r=1}^{k_{23}}z_r
    \cdot dx_1\cdots dx_{k_{12}}dy_1\cdots dy_{k_{13}}dz_1\cdots dz_{k_{23}} \\
&(\text{by \cite[Lemma 22]{NWX20}})\\
   \prec  &  \frac{V^3}{V_g}\cdot    \int_{\sum\limits_{p=1}^{k_{12}}x_p+\sum\limits_{q=1}^{k_{13}}y_q+\sum\limits_{r=1}^{k_{23}}z_r\leq L}\exp{\frac{1}{2}\left(\sum\limits_{p=1}^{k_{12}}x_p+\sum\limits_{q=1}^{k_{13}}y_q+\sum\limits_{r=1}^{k_{23}}z_r\right))}\\
\cdot& dx_1\cdots dx_{k_{12}}dy_1\cdots dy_{k_{13}}dz_1\cdots dz_{k_{23}}
\cdot L^{6g_1+6g_2-12+4k_{12}+2k_{13}+2k_{23}}\\
\prec& \frac{e^{\frac{L}{2}}}{g^{|\chi(X_1)|+|\chi(X_2)|}}\cdot L^{6g_1+6g_2-12+5k_{12}+3k_{13}+3k_{23}-1}\\
&(\text{by \cite[Theorem 3.5]{mirzakhani2013growth} or Part $(3)$ of \cite[Lemma 19]{NWX20}})\\
\prec& \frac{L^{3(|\chi(X_1)|+|\chi(X_2)|)-k_{12}-1}}{g^{|\chi(X_1)|+|\chi(X_2)|}}\cdot e^{\frac{L}{2}},
    \end{aligned}
\end{equation*}
which proves this lemma.
\end{proof}
\begin{lemma}\label{ineqx1x2bounded}
For any $m_1\leq 9$ and $L>1$, we have $$
\sum\limits_{|\chi(X_1)|=m_1,|\chi(X_2)|\geq 9} \E\Big(N_{g_1,g_2,g_3}^{k_{12},k_{13},k_{23}}(X,L)\Big)\prec \frac{L^{3m_1-1}e^{2L}}{g^{9+m_1}}.
 $$
\end{lemma}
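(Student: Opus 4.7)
The plan is to adapt the proof of Lemma~\ref{ineqx2bounded}; the crucial new difficulty is that $|\chi(X_2)|$ is no longer uniformly bounded, so the polynomial bound on $V^2$ used there is unavailable, and we must control the resulting \emph{infinite} family of topological configurations. First, exactly as in Lemma~\ref{ineqx2bounded}, Mirzakhani's integration formula yields
\begin{equation*}
\E\bigl(N_{g_1,g_2,g_3}^{k_{12},k_{13},k_{23}}(X,L)\bigr)\prec \frac{1}{V_g}\int_{R_L} V^1(\hat x,\hat y)\,V^2(\hat x,\hat z)\,V^3(\hat y,\hat z)\prod_p x_p\prod_q y_q\prod_r z_r\,d\hat x\,d\hat y\,d\hat z,
\end{equation*}
where $R_L=\{(\hat x,\hat y,\hat z)\geq 0:\sum_p x_p+\sum_q y_q+\sum_r z_r\leq L\}$. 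Since $m_1=|\chi(X_1)|\leq 9$, $V^1$ is a polynomial of bounded degree and $V^1(\hat x,\hat y)\prec L^{3m_1-(k_{12}+k_{13})}$ on $R_L$.

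For $V^2$ and $V^3$ I would now apply the $\sinh$-type estimate of \cite[Lemma 20]{NWX20} to \emph{both} factors,
\begin{equation*}
V^2(\hat x,\hat z)\prec V^2\prod_p\frac{2\sinh(x_p/2)}{x_p}\prod_r\frac{2\sinh(z_r/2)}{z_r},\quad V^3(\hat y,\hat z)\prec V^3\prod_q\frac{2\sinh(y_q/2)}{y_q}\prod_r\frac{2\sinh(z_r/2)}{z_r},
\end{equation*}
so that each $z_r$, shared by $V^2$ and $V^3$, acquires a squared $\sinh$ factor. Combining with the MIF measure $\prod_p x_p\prod_q y_q\prod_r z_r$ and invoking the elementary inequalities $2\sinh(u/2)\leq e^{u/2}$ and $(2\sinh(u/2))^2\leq u\,e^u$ (both valid for $u\geq 0$), the integrand is at most $L^{3m_1-(k_{12}+k_{13})}V^2V^3\cdot\exp\bigl(\tfrac12\sum_p x_p+\tfrac12\sum_q y_q+\sum_r z_r\bigr)\leq L^{3m_1-(k_{12}+k_{13})}V^2V^3\cdot e^L$ on $R_L$. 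Since $\mathrm{vol}(R_L)\prec L^{k_{12}+k_{13}+k_{23}}$, this yields the per-configuration estimate
\begin{equation*}
\E\bigl(N_{g_1,g_2,g_3}^{k_{12},k_{13},k_{23}}(X,L)\bigr)\prec \frac{V^2V^3}{V_g}\cdot e^L\cdot L^{3m_1+k_{23}}.
\end{equation*}

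It remains to sum over all admissible $(g_1,g_2,g_3,k_{12},k_{13},k_{23})$ satisfying \eqref{condition} with $|\chi(X_1)|=m_1$ and $|\chi(X_2)|\geq 9$. Using the Mirzakhani--Zograf asymptotic $V_{g,n}\asymp (2g-3+n)!(4\pi^2)^{2g-3+n}/\sqrt{g}$ together with the ratio estimates from \cite[Theorem 3.5]{mirzakhani2013growth} and \cite[Lemma 17]{NWX20}, the ratio $V^2V^3/V_g$ decays like the reciprocal of a multinomial coefficient of order roughly $(2g)^{m_1+1}\binom{m_2+m_3}{m_2}$; the resulting series in $m_2=|\chi(X_2)|\geq 9$ is of geometric type with ratio $\asymp L/g\to 0$ in the relevant regime $L=O(\ln g)$, hence dominated by its first term $m_2=9$, which contributes exactly the prefactor $g^{-(9+m_1)}$. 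The remaining polynomial-in-$L$ factors (including $L^{k_{23}}$ and the polynomial count of topological splits at each $m_2$) are then absorbed into the slack $e^{2L}/e^L=e^L$ between the per-configuration $e^L$ and the claimed $e^{2L}$, giving the stated bound. \textbf{The principal obstacle} is precisely this last step: unlike in Lemma~\ref{ineqx2bounded} where only finitely many topological types appear, the infinite summation here forces a careful use of the Mirzakhani--Zograf volume asymptotics to extract the sharp exponent $g^{-(9+m_1)}$, and the deliberately loose $e^{2L}$ in the statement is precisely the slack left to absorb the secondary polynomial-in-$L$ contributions from that summation.
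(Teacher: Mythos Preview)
Your per-configuration estimate is correct, but the summation step contains a genuine gap. When you write $\mathrm{vol}(R_L)\prec L^{k_{12}+k_{13}+k_{23}}$ you discard the factor $1/(k_{12}+k_{13}+k_{23})!$ coming from the simplex volume, and your first display with $\prec$ silently drops the symmetry factor $1/(k_{12}!\,k_{13}!\,k_{23}!)$ from Mirzakhani's integration formula. These two factorials are precisely what makes the infinite sum over topological types converge. Without them, your bound $\E\prec \frac{V^2V^3}{V_g}e^{L}L^{3m_1+k_{23}}$ carries an unbounded power $L^{k_{23}}$: since $k_{23}$ can be of order $g$ (take $g_2=g_3=0$, so $m_2,m_3\sim k_{23}$), the contribution of such a configuration is of order $(L/4)^{k_{23}}$ after accounting for $\frac{W_{m_2}W_{m_3}}{V_g}\asymp g^{3/2}4^{-g}(4\pi^2)^{-m_1}$ at $m_2\sim m_3\sim g$, and for $L\asymp\ln g$ this diverges. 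Your assertion that the series in $m_2$ is ``geometric with ratio $\asymp L/g$'' holds only for $m_2=O(1)$ (the actual ratio is $\asymp Lm_2/g$), and $L^{k_{23}}$ is certainly not ``polynomial in $L$'' once $k_{23}$ is allowed to grow with $g$; so the slack $e^L$ cannot absorb it.

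The paper's argument is the same up to and including your exponential bound on the integrand, but it keeps both factorials: the MIF contributes $1/(k_{12}!\,k_{13}!\,k_{23}!)$, and the integral over $R_L$ is bounded by $e^{L}L^{k-1}/k!$ with $k=\sum k_{ij}$. The sum is then organized by the total $k$: the multinomial identity $\sum_{k_{12}+k_{13}+k_{23}=k}\frac{1}{k_{12}!\,k_{13}!\,k_{23}!}=\frac{3^k}{k!}$ is bounded uniformly in $k$, and $\sum_{k\ge 2}L^{k-1}/k!\le e^{L}/L$ supplies exactly the extra $e^{L}$ in the target $e^{2L}$. The remaining sum over the $g_i$ (now decoupled from $k_{23}$) is handled by the volume estimates $V^i\le W_{|\chi(X_i)|}$ and \cite[Lemma 21]{NWX20}, which give $\sum_{g_i}\frac{W_{m_2}W_{m_3}}{V_g}\prec\frac{W_{2g-2-m_1}}{g^{9}V_g}\prec g^{-(9+m_1)}$. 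To repair your argument you must reinstate at least one of these factorial factors before summing.
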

\bp
Using the same argument as above, by  Mirzakhani's integral formula \cite[Theorem 7.1]{Mirz07} and \cite[Lemma 22]{NWX20},
we have for $L>1$,
\begin{equation}
\begin{aligned}\label{ab-9-1}
&\E\Big(N_{g_1,g_2,g_3}^{k_{12},k_{13},k_{23}}(X,L)\Big)
\prec \frac{L^{6g_1-6+2k_{12}+2k_{13}}}{k_{12}!k_{13}!k_{23}!}\frac{V^2V^3}{V_g}\\
\cdot&\int_{\sum\limits_{p=1}^{k_{12}}x_p+\sum\limits_{q=1}^{k_{13}}y_q+\sum\limits_{r=1}^{k_{23}}z_r\leq L}\exp{\frac{1}{2}\left(\sum\limits_{p=1}^{k_{12}}x_p+\sum\limits_{q=1}^{k_{13}}y_q+2\sum\limits_{r=1}^{k_{23}}z_r\right)}\\
\cdot& dx_1\cdots dx_{k_{12}}dy_1\cdots dy_{k_{13}}dz_1\cdots dz_{k_{23}}
\\
\prec& \frac{L^{6g_1-6+2k_{12}+2k_{13}}}{k_{12}!k_{13}!k_{23}!}\frac{V^2V^3}{V_g}\frac{L^{\sum\limits_{i<j} k_{ij}-1}}{(\sum\limits_{i<j} k_{ij})!}e^L.\\
\end{aligned}    
\end{equation}  
 Use $(*)$ to represent the condition $|\chi(X_1)|=m_1, |\chi(X_2)|\geq 9$ and (\ref{condition}).
Then we have \begin{equation*}
\begin{aligned}
&\sum\limits_{(*)} \E(N_{g_1,g_2,g_3}^{k_{12},k_{13},k_{23}}(X,L))=\sum_{k=2}^\infty\sum\limits_{(*),\sum\limits_{i<j} k_{ij}=k} \E(N_{g_1,g_2,g_3}^{k_{12},k_{13},k_{23}}(X,L))\\
\prec &  \sum_{k=2}^\infty    \frac{L^{k-1}e^LL^{3|\chi(X_1)|}}{k!}  \sum_{\sum k_{ij}=k}      \frac{1}{k_{12}!k_{13}!k_{23}!}
  \sum_{g_i:(*)}\frac{V^2V^3}{V_g} \quad (\text{by \eqref{ab-9-1}})\\
  \prec & \sum_{k=2}^\infty    \frac{L^{k-1}e^LL^{3|\chi(X_1)|}}{k!}  \sum_{\sum k_{ij}=k}      \frac{1}{k_{12}!k_{13}!k_{23}!}
  \sum_{g_i:(*)}\frac{W_{|\chi(X_2)|}W_{|\chi(X_3)|}}{V_g}
\\
& (\text{by \cite[Lemma 3.2]{mirzakhani2013growth} or Part $(2)$ of \cite[Lemma 19]{NWX20}})  \\
  \prec &\sum_{k=2}^\infty    \frac{L^{k-1}e^LL^{3|\chi(X_1)|}}{k!} \cdot \frac{3^k}{k!}\frac{W_{|\chi(X_2)|+|\chi(X_3)|}}{g^{9}V_g} \quad (\text{by \cite[Lemma 23]{NWX20}})\\
\prec &\frac{e^{2L}L^{3m_1-1}}{g^{9+m_1}} \ (\text{by \cite[Theorem 3.5]{mirzakhani2013growth} or Part $(3)$ of \cite[Lemma 19]{NWX20}}), \\
\end{aligned}    
\end{equation*} 
which proves this lemma.
\ep
\begin{lemma}\label{ineqx1bounded}
For $L>1$, we have $$
\sum\limits_{10\leq |\chi(X_1)|} \E\Big(N_{g_1,g_2,g_3}^{k_{12},k_{13},k_{23}}(X,L)\Big)\prec  \frac{e^{2L}}{g^{10}L}.
 $$
\end{lemma}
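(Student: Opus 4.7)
The plan is to adapt the calculation carried out for Lemma \ref{ineqx1x2bounded} to the range $|\chi(X_1)|\geq 10$. The essential difference is that the polynomial bound $V^1 \prec L^{6g_1-6+2k_{12}+2k_{13}}$ used there is no longer useful when $g_1$ grows with $g$, so instead I would apply the sinh-estimate of \cite[Lemma 20]{NWX20} \emph{uniformly} to all three volume factors $V^1$, $V^2$ and $V^3$. After inserting this into Mirzakhani's integral formula \cite[Theorem 7.1]{Mirz07}, each boundary length variable $x_p$, $y_q$, or $z_r$ now appears as an argument of exactly two of the three volumes, contributing a factor of the form $\bigl(\tfrac{2\sinh(\cdot/2)}{\cdot}\bigr)^2 \prec e^{(\cdot)}/(\cdot)^2$ to the integrand. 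Combined with the $\prod x\prod y\prod z$ weight from Mirzakhani's formula, integrating the resulting exponential over the simplex $\{\sum \leq L\}$ yields a factor of order $\frac{L^{k-1}e^{L}}{k!}$ where $k = k_{12}+k_{13}+k_{23}$.

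Next, I would replace each $V^i(0,\dots,0)$ by $W_{|\chi(X_i)|}$ using \cite[Lemma 3.2]{mirzakhani2013growth} (or Part (2) of \cite[Lemma 17]{NWX20}), and then apply the iterated sub-multiplicativity bound \cite[Lemma 21]{NWX20} twice to collapse $W_{|\chi(X_1)|}W_{|\chi(X_2)|}W_{|\chi(X_3)|}$ into a single $W$-term (with binomial combinatorial factors and a $3^k$-type factor). Using the identity $|\chi(X_1)|+|\chi(X_2)|+|\chi(X_3)| = 2g-2$ together with Part (3) of \cite[Lemma 17]{NWX20} (\ie, \cite[Theorem 3.5]{mirzakhani2013growth}), the ratio $\frac{W_{|\chi(X_2)|+|\chi(X_3)|}}{V_g}$ is bounded by $g^{-|\chi(X_1)|}$, which is the source of the $g^{-10}$ saving.

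I would then sum the resulting bound over all partitions $(g_1,g_2,g_3,k_{12},k_{13},k_{23})$ satisfying \eqref{condition} with $|\chi(X_1)|\geq 10$ by stratifying over $m_1 = |\chi(X_1)|$. For each fixed $m_1 \geq 10$ the inner sum is estimated exactly as in the proof of Lemma \ref{ineqx1x2bounded}, producing a bound of order $\frac{L^{3m_1-1}e^{2L}}{g^{9+m_1}}$ (the extra $e^L$ compared to Step 1 arises from the constants in bounding $\sinh$ by an exponential on the relevant range). The remaining series $\sum_{m_1\geq 10} (L^3/g)^{m_1}$ is a geometric series which, in the range $L = O(\log g)$ relevant to Proposition \ref{L2random}, is dominated by its leading term at $m_1=10$; the stated bound $\frac{e^{2L}}{g^{10}L}$ is then a convenient (loose) upper estimate absorbing all polynomial factors in $L$.

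The main obstacle will be the bookkeeping of combinatorial factors when summing over admissible partitions: with all three Euler characteristics growing, the number of configurations grows rapidly, and one must verify that the factorials $1/(k_{12}!k_{13}!k_{23}!)$ from Mirzakhani's formula together with the binomial factors from \cite[Lemma 21]{NWX20} tame both the triple sum over $(g_1,g_2,g_3)$ and the inner sum over $(k_{12},k_{13},k_{23})$. The delicate point, analogous to the transition from Lemma \ref{ineqx2bounded} to Lemma \ref{ineqx1x2bounded}, is ensuring that the sub-multiplicativity of $W_n$ is applied in the correct order so that each factor of $g^{-1}$ arising from the $W/V_g$ ratio is not wasted on a free parameter that later gets summed out.
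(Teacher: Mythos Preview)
Your first two paragraphs are exactly the paper's approach: apply the $\sinh$-bound of \cite[Lemma 20]{NWX20} to \emph{all three} volume factors (so each length variable picks up a full $e^{(\cdot)}$), integrate over the simplex to produce $\frac{L^{k-1}e^{L}}{k!}$, replace $V^i$ by $W_{|\chi(X_i)|}$, and then collapse the triple product of $W$'s via \cite[Lemma 21]{NWX20}. Up to here you and the paper agree.

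The problem is your third paragraph. Having committed in paragraph~1 to using the $\sinh$-bound on $V^1$ (precisely because the polynomial bound is ``no longer useful''), you then say that for each fixed $m_1\geq 10$ you would estimate ``exactly as in the proof of Lemma~\ref{ineqx1x2bounded}'', yielding $\frac{L^{3m_1-1}e^{2L}}{g^{9+m_1}}$. But that $L^{3m_1}$ factor \emph{is} the polynomial bound on $V^1$; it cannot appear if you have already replaced $V^1$ by $V^1\cdot\prod\frac{2\sinh(\cdot/2)}{(\cdot)}$ as in paragraph~1. So paragraph~3 contradicts paragraph~1. Worse, if you do take the Lemma~\ref{ineqx1x2bounded} route for each $m_1$, the resulting series $\sum_{m_1\geq 10}(L^3/g)^{m_1}$ converges only for $L\prec g^{1/3}$; this proves something sufficient for Proposition~\ref{L2random} but not Lemma~\ref{ineqx1bounded} as stated (valid for all $L>1$).

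The correct continuation from your paragraph~2 is simply \emph{not} to stratify by $m_1$ and quote Lemma~\ref{ineqx1x2bounded}. Instead, the paper first fixes $k=\sum k_{ij}$, then for fixed $k_{ij}$'s applies \cite[Lemma 21]{NWX20} to the sum over $g_2,g_3$ (collapsing $W_{|\chi(X_2)|}W_{|\chi(X_3)|}$ to a factor $\prec W_{2g-2-|\chi(X_1)|}/(2g-2-|\chi(X_1)|)$), and then applies it once more to the remaining sum over $g_1$ with the constraint $|\chi(X_1)|\geq 10$, yielding the clean factor $g^{-10}$ with \emph{no} $L$-dependence. The final sum over $k$ is then just $e^{L}\sum_{k\geq 2}\frac{L^{k-1}}{k!}\prec \frac{e^{2L}}{L}$, giving the stated bound for all $L>1$. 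In short: your outline in paragraphs~1--2 is already the whole proof; paragraph~3 should be replaced by ``now sum over all admissible $(g_i,k_{ij})$ with $|\chi(X_1)|\geq 10$ using \cite[Lemma 21]{NWX20} twice to extract $g^{-10}$, and finish by summing the resulting series in $k$.''
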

\begin{proof}
Using the similar argument as above, by  Mirzakhani's integral formula \cite[Theorem 7.1]{Mirz07} we have \begin{equation*}
    \begin{aligned}
    &\sum\limits_{10\leq |\chi(X_1)|}\E\Big(N_{g_1,g_2,g_3}^{k_{12},k_{13},k_{23}}(X,L)\Big)\leq \sum\limits_{10\leq |\chi(X_1)|}\frac{V^1V^2V^3}{V_g}\\
    \cdot &\frac{1}{k_{12}!k_{13}!k_{23}!}\int_{\sum\limits_{p=1}^{k_{12}}x_p+\sum\limits_{q=1}^{k_{13}}y_q+\sum\limits_{r=1}^{k_{23}}z_r\leq L}\exp{\left(\sum\limits_{p=1}^{k_{12}}x_p+\sum\limits_{q=1}^{k_{13}}y_q+\sum\limits_{r=1}^{k_{23}}z_r\right)}\\
\cdot& dx_1\cdots dx_{k_{12}}dy_1\cdots dy_{k_{13}}dz_1\cdots dz_{k_{23}}\\
&(\text{by MIF, \cite[Theorem 1.1]{Mirz07} and \cite[Lemma 22]{NWX20}})\\
\prec & \sum\limits_{10\leq |\chi(X_1)|}  \frac{e^L}{k_{12}!k_{13}!k_{23}!}\frac{W_{|\chi(X_1)|}W_{|\chi(X_2)|}W_{|\chi(X_3)|} }{V_g} \frac{L^{\sum\limits_{i<j}k_{ij}-1}e^L}{(\sum\limits_{i<j}k_{ij})!} \\
& (\text{by \cite[Lemma 3.2]{mirzakhani2013growth} or Part $(2)$ of \cite[Lemma 19]{NWX20}})  \\
\prec& \sum_{k=2}^\infty\sum_{\sum k_{ij}=k}\frac{c^3e^L}{k_{12}!k_{13}!k_{23}!}\frac{L^{k-1}}{k!}\sum_{g_1:10\leq |\chi(X_1)|}  \frac{ W_{|\chi(X_1)|}W_{2g-2-|\chi(X_1)|} }{(2g-2-|\chi(X_1)|)V_g}\\
& (\text{by \cite[Lemma 23]{NWX20}})  \\
\prec &  \sum_{k=2}^\infty e^L\frac{L^{k-1}}{k!}\frac{1}{g^{10}} \quad (\text{by \cite[Lemma 23]{NWX20}})\\
\prec & \frac{e^{2L}}{g^{10}L},
    \end{aligned}
\end{equation*}
which proves this lemma.
\end{proof}

Now we are ready to prove Proposition \ref{L2random}.

\begin{proof}[Proof of Proposition \ref{L2random}]
Take $L_g=4\log g-10\log\log g-\omega(g)$. Then we have \begin{equation*}
    \begin{aligned}
    &\sum\limits_{(\ref{condition})}\E\Big(N_{g_1,g_2,g_3}^{k_{12},k_{13},k_{23}}(X,L_g)\Big)=\sum_{|\chi(X_2)|\leq 8} \E\Big(N_{g_1,g_2,g_3}^{k_{12},k_{13},k_{23}}(X,L_g)\Big)\\
    &+\sum_{m_1=1}^{9}\sum_{|\chi(X_1)|=m_1,|\chi(X_2)|\geq 9} \E\Big(N_{g_1,g_2,g_3}^{k_{12},k_{13},k_{23}}(X,L_g)\Big)
    \\
    &+ \sum_{10\leq |\chi(X_1)|}\E\Big(N_{g_1,g_2,g_3}^{k_{12},k_{13},k_{23}}(X,L_g)\Big).
    \end{aligned}
\end{equation*}
Since $\lim\limits_{g\to \infty}w(g)=\infty$ and $w(g)=o(\ln\ln g)$, by Lemma \ref{ineqx2bounded}, Lemma \ref{ineqx1x2bounded} and Lemma  \ref{ineqx1bounded},
 we have \begin{equation*}
    \sum_{|\chi(X_2)|\leq 8} \E\Big(N_{g_1,g_2,g_3}^{k_{12},k_{13},k_{23}}(X,L_g)\Big)\prec  e^{-\frac{1}{2}\omega(g)},
\end{equation*}
\begin{equation*}
    \sum_{|\chi(X_1)|=m_1,|\chi(X_2)|\geq 9}\E\Big(N_{g_1,g_2,g_3}^{k_{12},k_{13},k_{23}}(X,L_g)\Big)\prec  \frac{e^{-2\omega(g)}}{g^2\log^{18}g}
\end{equation*}
and
\begin{equation*}
    \sum_{10\leq |\chi(X_1)|}\E\Big(N_{g_1,g_2,g_3}^{k_{12},k_{13},k_{23}}(X,L_g)\Big)\prec \frac{e^{-2\omega(g)}}{g\log^{21} g}.
\end{equation*}
So $$
\lim\limits_{g\to\infty}\sum\limits_{(\ref{condition})}\E\Big(N_{g_1,g_2,g_3}^{k_{12},k_{13},k_{23}}(X,L_g)\Big)=0,
$$
which completes the proof.
\end{proof}

\bp[Proof of Theorem \ref{mt-3}]
It clearly follows from \eqref{whp-2-1}, \eqref{up-l2} and Proposition \ref{L2random}.
\ep

\bibliographystyle{amsalpha}
\bibliography{ref}

\end{document}